\newif\ifplainarticle
\definecolor{labelkey}{rgb}{0,0,.75}
\definecolor{MyGreen}{rgb}{0,.6,.2}
\definecolor{MyDarkBlue}{rgb}{.1,.1,.75}
\numberwithin{equation}{section}
\theoremstyle{plain}
\newtheorem{thm}{Theorem}[section]
\newtheorem{lem}[thm]{Lemma}
\newtheorem{cor}[thm]{Corollary}
\newtheorem{prop}[thm]{Proposition}
\theoremstyle{definition}
\newtheorem{defn}[thm]{Definition}
\newcommand{\R}{{\mathbb R}}
\newcommand{\tr}{\mathrm{tr}}
\newcommand{\di}{\mathrm{div}}
\newcommand{\p}{\partial}
\newcommand{\Vol}{\textrm{Vol}}
\newcommand{\calL}{\mathcal{L}}
\let\Reals\R
\DeclareMathOperator{\Lap}{\Delta}
\newcommand{\loc}{\text{loc}}
\newcommand{\Mbar}{\overline{M}}
\newcommand{\gbar}{{\overline{g}}}
\newcommand{\kbar}{\overline{k}}
\newcommand{\pbar}{\bar \partial}
\newcommand{\phibar}{\overline{\phi}}
\newcommand{\Vbar}{\overline{V}}
\newcommand{\ra}{\rightarrow}
\newcounter{mnotecount}[section]
\let\oldmarginpar\marginpar
\renewcommand\marginpar[1]{\-\oldmarginpar[\raggedleft\footnotesize #1]%
{\raggedright\footnotesize #1}}
\title{Yamabe Classification and Prescribed Scalar Curvature 
in the Asymptotically {E}uclidean Setting}
\author{James Dilts and David Maxwell}
\title[Yamabe and Scalar Curvature Problems on AE Manifolds]{The Yamabe Classification and
Prescribed Scalar Curvature Problems on Asymptotically Euclidean Manifolds}
\author[J. Dilts]{James Dilts$^1$}
\address{$^1$University of Oregon}
\email{jdilts@uoregon.edu}
\author[D. Maxwell]{David Maxwell$^2$}
\address{$^2$University of Alaska, Fairbanks}
\email{damaxwell@alaska.edu}
\keywords{asymptotically Euclidean manifolds, Yamabe problem, prescribed scalar curvature}
\date{March 13, 2015}
\begin{document}
\maketitle

\begin{abstract}
We prove a necessary and sufficient condition for an asymptotically Euclidean
manifold to be conformally related to one with specified nonpositive scalar curvature:
the zero set of the desired scalar curvature must have a positive Yamabe invariant, as
defined in the article.  We show additionally how the sign of the Yamabe invariant 
of a measurable set can be computed from the sign of certain generalized ``weighted" 
eigenvalues of the conformal Laplacian. Using the prescribed scalar curvature
result we give a characterization of the Yamabe classes of asymptotically Euclidean
manifolds. We also show that the Yamabe class of an asymptotically Euclidean
manifold is the same as the Yamabe class of its conformal compactification.
\end{abstract}

\section{Introduction}
One formulation of the prescribed scalar curvature problem asks, for a given Riemannian
manifold
$(M^n,g)$ and some function $R'$, is there a conformally related metric $g'$ with scalar
curvature $R'$? If we define $g' = \phi^{N-2} g$ for $N:= \frac{2n}{n-2}$, this is 
equivalent to finding a positive solution of
\begin{equation}\label{eq:prescribed-sc}
  -a\Delta \phi + R \phi = R' \phi^{N-1},
\end{equation} where $a := \frac{4(n-1)}{n-2}$, $R$ is the scalar curvature of $g$,
and $-a\Delta + R$ is the conformal Laplacian. 

On a compact manifold the Yamabe invariant of the conformal
class of $g$ poses an obstacle to the solution of \eqref{eq:prescribed-sc}.
For example, in the case where $M$ is connected and $R'$ is constant, problem \eqref{eq:prescribed-sc}
is known as the Yamabe problem, and it admits a solution if and only
if the sign of the Yamabe invariant agrees with the sign of $R'$ 
\cite{Yamabe60}\cite{Trudinger68}\cite{Aubin76}\cite{Schoen84}.
More generally,
if $R'$ has constant sign, we can conformally transform to a metric with
scalar curvature $R'$ only if the sign of the Yamabe invariant agrees with the
sign of the scalar curvature.  Hence it is natural to divide conformal classes
into three types, Yamabe positive, negative, and null, depending on the sign
of the Yamabe invariant.

We are interested in 
solving equation \eqref{eq:prescribed-sc} on a
class of complete Riemannian manifolds that, loosely speaking, have a geometry
approximating Euclidean space at infinity.  These asymptotically Euclidean
(AE) manifolds also possess a Yamabe invariant, but the relationship between
the Yamabe invariant and problem \eqref{eq:prescribed-sc} is 
not well understood in the AE setting, except for some results
concerning Yamabe positive metrics.
We have the following consequences of \cite{Maxwell05b} Proposition 3.
\begin{enumerate}
\item An AE metric can be conformally transformed to an AE
metric with zero scalar curvature if and only if it is Yamabe positive.
As a consequence, since the scalar curvature of an AE metric decays to zero at infinity,
only Yamabe positive AE metrics can be conformally transformed to 
have constant scalar curvature.
\item Yamabe positive AE metrics have conformally related AE metrics with
everywhere positive scalar curvature, and 
conformally related AE metrics with everywhere negative scalar curvature.
\item If an AE metric admits a conformally
related metric with non-negative scalar curvature, then it is Yamabe positive.
\end{enumerate}
Note that it was originally believed that transformation to zero scalar
curvature is possible if and only if the manifold is Yamabe non-negative
\cite{BC81}.
The proof in \cite{BC81} contains an error, and the statement and proof
were corrected in \cite{Maxwell05b}. See also \cite{Friedrich11}, which
shows that there exists a Yamabe-null AE manifold and hence the 
hypotheses of \cite{BC81} and \cite{Maxwell05b} are genuinely different.

As a consequence of these three facts, the situation on an AE manifold is somewhat different from the
compact setting.  In particular, 
although positive scalar curvature is a hallmark
of Yamabe positive metrics, negative scalar curvature does
not characterize Yamabe-negative metrics.  Indeed, we show in this article
that given an AE metric $g$, and a strictly negative
function $R'$ that decays to zero suitably at infinity, the conformal
class of $g$ includes a metric with scalar curvature $R'$ regardless
of the sign of the Yamabe invariant.  So every strictly negative scalar curvature 
is attainable for every conformal class, but zero
scalar curvature is attainable only for Yamabe positive metrics.  Thus we
are lead to investigate the role of the Yamabe class
in the boundary case of prescribed non-positive scalar curvature.

Rauzy treated the analogous problem on
smooth compact Riemannian manifolds in \cite{Rauzy95},
which contains the following statement.
Suppose $R'\le 0$ and $R'\not\equiv 0$.  Observe that if
$R'$ is the scalar curvature of a metric conformally related to some $g$,
then $g$ must be Yamabe-negative, and without
loss of generality we assume that $g$ has constant negative scalar curvature $R$.
Then there is a metric in the conformal
class of $g$ with scalar curvature $R'$ if and only if
\begin{equation}\label{eq:rauzycond}
a \lambda_{R'} > -R
\end{equation}
where $a$ is the constant from equation \eqref{eq:prescribed-sc} and where
\begin{equation}
\lambda_{R'} = \inf \left\{ \frac{\int |\nabla u|^2 }{\int u^2}: u\in W^{1,2}, u\ge 0, u\not \equiv 0, \int R' u= 0  \right\}.
\end{equation}
Rauzy's condition \eqref{eq:rauzycond}
is not immediately applicable on asymptotically Euclidean manifolds, in part
because of the initial transformation to constant negative scalar curvature.
However, recalling that $R$ is constant 
we can write $a\lambda_{R'}+R$ as the infimum of
\begin{equation}
\frac{\int a|\nabla u|^2 + R u^2 }{\int u^2}
\end{equation}
over functions $u$ supported in the region where $R'=0$.  So, morally, inequality
\eqref{eq:rauzycond} expresses the positivity of the first eigenvalue of 
the conformal Laplacian of the constant scalar curvature metric $g$ 
on the region $\{R'=0\}$.  The connection between the first eigenvalue
of the conformal Laplacian and prescribed scalar curvature problems
is well known, but its use is more technical on
non-compact manifolds where true eigenfunctions need not exist. For example,
\cite{FCS80} shows that a metric on a noncompact manifold
can be conformally transformed to a scalar flat one if and only if the first eigenvalue
of the conformal Laplacian is positive on every bounded domain.

In this article we extend these ideas in a number of ways 
to solve the prescribed non-positive scalar curvature problem on 
asymptotically Euclidean manifolds, and we obtain
a related characterization of the Yamabe class of
an AE metric. In particular, we show the following.
\begin{itemize}
\item Every measurable subset $V\subseteq M$ 
can be assigned a number $y(V)$ that generalizes 
the Yamabe invariant of a manifold.  The invariant
depends on the conformal class of the AE metric,
but is independent of the conformal representative.
\item We can assign every measurable subset $V\subseteq M$
a number $\lambda_{\delta}(V)$ that generalizes the
first eigenvalue of the conformal Laplacian.  These
numbers are not conformal invariants, and are not
even canonically defined as they depend on a choice
of parameters (a number $\delta$ and a choice
of weight function at infinity).  Nevertheless
the sign of $\lambda_\delta(V)$ agrees with the sign
of $y(V)$, regardless of the choice of these parameters.
\item Given a candidate scalar curvature $R'\le 0$,
there is a metric in the conformal class of $g$
with scalar curvature $R'$ if and only if 
$\{R'=0\}$ is Yamabe positive, i.e., $y(\{R'=0\})>0$.
\item A metric is Yamabe positive if and only if
every scalar curvature $R'\le 0$ is attained by
a metric in its conformal class.
\item A metric is Yamabe null if and only if
every scalar curvature $R'\le 0$, except for $R'\equiv 0$,
is attained by a metric in its conformal class.
\item A metric is Yamabe negative if and only if
there is a scalar curvature $R'\le 0$, $R'\not\equiv 0$,
that is unattainable within the conformal class.  We also
present some results concerning which scalar curvatures
have Yamabe positive zero sets, and hence are attainable.
\item Additionally, a metric is Yamabe positive/negative/null
if and only if it admits a conformal compactification
to a metric with the same Yamabe type.
\end{itemize}

These results carry over to compact manifolds, where
we obtain some technical improvements.  First, 
Rauzy's condition \eqref{eq:rauzycond} is
equivalent to our condition $y(\{R'=0\})>0$  
(or equivalently $\lambda_\delta(\{R'=0\})>0$).  But the condition
$y(\{R'<0\})>0$
can be measured without reference to a particular background metric.
Moreover, we work with fairly general metrics ($W^{2,p}_\loc$ with $p>n/2$),
and arbitrary scalar curvatures in $L^p(M)$. 
Finally, there is an error in Rauzy's proof, closely
related to the gap in Yamabe's original attempt at
the Yamabe problem, that we correct in our presentation.
\footnote{We would like to thank Rafe Mazzeo for having spotted
our own error in this regard while this work was in preparation.}

The prescribed scalar curvature problem on AE manifolds when $R'\ge0$, or when
$R'$ changes sign, remains open.  
Of course if $R'\ge 0$ the problem
can only be solved if the manifold is Yamabe positive, but it is not
known the extent to which this is sufficient.  For scalar curvatures
that change sign, little is known for any Yamabe class.  Nevertheless,
the case $R'\le 0$ that we treat here has an interesting application to general relativity; see below.
For comparison, we note that the prescribed scalar curvature problem on a compact manifold
is also not yet fully solved.  On a Yamabe-positive manifold it is necessary
that $R'>0$ somewhere, and on a Yamabe-null manifold it is necessary
that either $R'\equiv 0$, or $R'>0$ somewhere and $\int R'<0$ when
computed with respect to the scalar flat conformal representative.  See
\cite{ES86}
which shows that these conditions
are sufficient in some cases.  See also \cite{Bourguignon:1987ge}
for obstructions posed by conformal Killing fields.

Our interest in this problem stems from its application to 
general relativity. Initial data for the Cauchy problem
must satisfy certain compatibility conditions known as the 
Einstein constraint equations.  One approach to finding solutions
of the constraint equations, the so-called conformal method,
involves solving a coupled system of PDEs that includes 
the Lichnerowicz equation, which in the vacuum case is
\begin{equation}\label{eq:lichintro}
-a\Delta \phi  +R \phi + \frac{n-1}{n} \tau^2 \phi^{N-1} - \beta^2 \phi^{-N-1} = 0.
\end{equation} 
Here $\phi$ is an unknown conformal factor, $\tau$ is a prescribed function
(a mean curvature, in fact),
and, for the discussion at hand, $\beta$ can be thought of as a prescribed function
as well.  On a compact Yamabe-negative manifold, the Lichnerowicz
equation \eqref{eq:lichintro} is solvable if and only if the prescribed
scalar curvature problem \eqref{eq:prescribed-sc} is solvable for $R'=-\tau^2$
\cite{Maxwell05}.  An analogous condition holds on AE manifolds \cite{DGI13},
and hence the prescribed non-negative scalar curvature problem is
intimately connected to the solvability of the Lichnerowicz equation.
In particular, on an AE or Yamabe-negative compact manifold,
the Lichnerowicz equation can only be solved if the zero set 
of the mean curvature $\tau$ is Yamabe positive.

\section{Asymptotically Euclidean Manifolds}\label{sec:AEManifolds}

Throughout this article we assume that $(M,g)$ is a connected Riemannian
$n$-manifold.  
An asymptotically Euclidean (AE) manifold is a complete manifold such that
for some compact $K \subset M$, the complement $M\setminus K$ has finitely many
components $E_i$, with each $E_i$ admitting a distinguished diffeomorphism 
to the exterior of a ball in $\R^n$. The $E_i$ are called the ends of $M$, 
and in end coordinates the metric $g$ decays at infinity to 
the standard Euclidean metric $e$.

In order to make this notion precise we use weighted Sobolev spaces. Let
$\rho\geq 1$ be a smooth function on $M$ that agrees with
the Euclidean radial coordinate function near infinity on each end,
and let $\hat g$ be a smooth metric on $M$ that equals the Euclidean
metric in a neighborhood of each infinity. We say that a $L^1_{\loc}$ 
tensor $T$ belongs to $W^{k,p}_{\delta}(M)$ if
\begin{equation}\label{eq:weighted-norm}
\|T\|_{W^{k,p}_\delta(M)}:=\sum_{j=0}^k \left\|\rho^{-\delta - \frac{n}{p} + j} 
\; |\nabla^j T | \right\|_{L^p(M)} < \infty
\end{equation}
where all metric quantities in equation \eqref{eq:weighted-norm} 
use $\hat g$.  When $k=0$, we denote the space by $L^{p}_{\delta}(M)$ with
norm $\|\cdot\|_{p,\delta}$.  
It is easy to see that the spaces $W^{k,p}_\delta$ are independent of the choice
of background metric $\hat g$, and that the associated norms are equivalent.
There are varying conventions in the literature for
the weight parameter $\delta$ in equation \eqref{eq:weighted-norm},
and we follow \cite{Bartnik86}.  Consequently,
functions in $W^{k,p}_{\delta}$ have asymptotic growth
$O(r^\delta)$ on each end. Other properties of weighted spaces
can be found in \cite{Bartnik86},
and they parallel those
for Sobolev spaces on compact manifolds.  There are two key subtleties. 
First, $L^p_\delta$ embeds in $L^{p'}_{\delta'}$ 
if $p>p'$ and $\delta<\delta'$, but this is not true if $\delta=\delta'$.
Second, the embedding
\begin{equation}
W^{k,p}_{\delta} \hookrightarrow W^{k-1,p'}_{\delta'}
\end{equation}
is compact so long both
\begin{equation}
 \frac1p -\frac1n < \frac1{p'} \qquad\text{and}\qquad \delta<\delta'.
\end{equation}

We also have Sobolev embedding into spaces of continuous functions.
A function $u$ belongs
to $L^\infty_\delta(M)$ if
\begin{equation}
\|u\|_{L^\infty_\delta}(M) := \sup_M |u|\rho^{-\delta} <\infty
\end{equation} 
and we write $C^0_\delta$ for the continuous elements of $L^\infty_\delta$.
Then $W^{k,p}_\delta \subset C^0_\delta$
for $p>n/k$ \cite{Bartnik86}. 

We say that $g$ is a $W^{k,p}_\tau$ AE metric if $\tau <0$ and
\begin{equation}
g - \hat g \in W^{k,p}_\tau.
\end{equation}
We will work exclusively with $W^{2,p}_{\tau}$ AE metrics with $p>n/2$,
and we henceforth assume 
\begin{equation}
p>n/2 \quad\text{and}\quad \tau<0.
\end{equation}
A $W^{2,p}_\tau$
metric is H\"older continuous and has curvatures in $L^p_{\tau-2}$.
Using the fact that $W^{2,p}_{\loc}$ is an algebra, a straightforward
computation shows that we can use a $W^{2,p}_\tau$ metric 
for the metric quantities
in equation \eqref{eq:weighted-norm} to obtain an equivalent norm,
so long as $0\le k\le 2$. We will use this definition of the
norm whenever it is appropriate.  

The 
Laplacian and conformal Laplacian of a $W^{2,p}_\tau$  metric are well-defined 
as maps from $W^{2,q}_\delta$ to $L^q_{\delta-2}$ for $q\in (1,p]$, they are Fredholm
with index 0 if $\delta\in (2-n,0)$, and indeed the Laplacian is an isomorphism
in this range; see, e.g., \cite{Bartnik86} Proposition 2.2.  Note that 
\cite{Bartnik86} works on a manifold diffeomorphic to $R^n$, but the results
we cite from \cite{Bartnik86} extend 
to manifolds with general topology and any finite number of ends.

Many of the results in this article hold for both asymptotically Euclidean
and compact manifolds, and indeed we can often 
treat a $W^{2,p}$ metric on a compact manifold
as a $W^{2,p}_\tau$ metric on an asymptotically Euclidean manifold with zero ends,
in which case the weight function $\rho$ is irrelevant and could be set to 1
if desired.  For the sake of brevity, throughout Section 3 we interpret 
a compact manifold as an AE manifold with zero ends.
In the remaining sections there are differences between the two cases and
we assume that AE manifolds have at least one end.

The weight parameter
\begin{equation}\label{eq:deltastar}
\delta^* = \frac{2-n}{2}
\end{equation}
plays a prominent role in this paper, and it reflects the minimum decay
needed to ensure $\int|\nabla u|^2$ is finite.  At this decay
rate, $L^N_{\delta^*}=L^N$ and we have the following inequalities
that generalize the Poincar\'e and Sobolev inequalities on $\mathbf{R}^n$.

\begin{lem}\label{lem:poincare}
Let $(M,g)$ be a non-compact $W^{2,p}_\tau$ AE manifold.
There exists constants $c_1,c_2$ such that
\begin{equation}\label{eq:poincare}
\|\nabla u\|_2 \ge c_1 \|u\|_{2,\delta^*}
\end{equation}
\begin{equation}\label{eq:sobolev}
\|\nabla u\|_2 \ge c_2 \|u\|_{N}
\end{equation}
for all $u\in W^{1,2}_{\delta^*}(M)$, where $\delta^*$ is defined 
in equation \eqref{eq:deltastar} and where $N$ is the critical Sobolev
exponent $2n/(n-2)$.
\end{lem}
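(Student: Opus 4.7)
The plan is to prove the Poincar\'e-type bound \eqref{eq:poincare} first by a compactness argument, and then to deduce the Sobolev-type bound \eqref{eq:sobolev} by combining the Euclidean Sobolev inequality on the ends with \eqref{eq:poincare} on the compact interior.

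The starting point is the classical Euclidean Hardy and Sobolev inequalities for $v\in C_c^\infty(\R^n)$ with $n\ge 3$: $\int v^2/|x|^2 \le \left(\frac{2}{n-2}\right)^2\int|\nabla v|^2$ and $\|v\|_N \le c_n\|\nabla v\|_2$. Since $g-\hat g\in W^{2,p}_\tau$ with $\tau<0$ and $\hat g$ is Euclidean outside a compact set, there is a compact $K_0\subset M$ such that on $M\setminus K_0$---which lies in the ends, each identified with an exterior domain in $\R^n$---the metric $g$, its volume form, and the weight $\rho$ are all uniformly equivalent to their Euclidean counterparts. Consequently, for any $v$ supported in $M\setminus K_0$, the Euclidean inequalities transfer to
\begin{equation*}
\|v\|_{2,\delta^*}\le C\|\nabla v\|_2 \qquad\text{and}\qquad \|v\|_N\le C\|\nabla v\|_2,
\end{equation*}
with slightly degraded constants.

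To establish \eqref{eq:poincare}, I would argue by contradiction: if it fails, choose a sequence $u_n\in W^{1,2}_{\delta^*}(M)$ with $\|u_n\|_{2,\delta^*}=1$ and $\|\nabla u_n\|_2\to 0$. This sequence is bounded in $W^{1,2}_{\delta^*}$, so by the compact embedding $W^{1,2}_{\delta^*}\hookrightarrow L^2_\loc$ and weak lower semicontinuity, a subsequence converges weakly in $W^{1,2}_{\delta^*}$ and strongly in $L^2_\loc$ to some $u_\infty$ with $\nabla u_\infty=0$. Thus $u_\infty$ is a constant in $L^2_{\delta^*}$, and since $\int_M\rho^{-2}\,dV$ diverges on any noncompact AE manifold, $u_\infty=0$. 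Now fix a compact $K_1\supset K_0$ and a smooth cutoff $\chi$ equal to $1$ on $M\setminus K_1$ and to $0$ on $K_0$, and split
\begin{equation*}
\|u_n\|_{2,\delta^*}^2 = \int_{K_1}\frac{u_n^2}{\rho^2}\,dV + \int_{M\setminus K_1}\frac{u_n^2}{\rho^2}\,dV.
\end{equation*}
The first integral tends to zero by strong $L^2_\loc$-convergence. The second is dominated by $\int(\chi u_n)^2/\rho^2\,dV$; applying the transferred Hardy inequality to $\chi u_n$ and expanding $\nabla(\chi u_n)=\chi\nabla u_n+u_n\nabla\chi$ yields the bound $C(\|\nabla u_n\|_2^2+\|u_n\|_{L^2(K_1)}^2)\to 0$, contradicting $\|u_n\|_{2,\delta^*}=1$.

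For \eqref{eq:sobolev}, decompose $u=\chi u+(1-\chi)u$. The transferred Sobolev inequality gives $\|\chi u\|_N\le C(\|\nabla u\|_2+\|u\|_{L^2(K_1)})$, and the classical compact Sobolev embedding $H^1(K_1)\hookrightarrow L^N(K_1)$ gives the same bound for $(1-\chi)u$. The remaining $L^2(K_1)$ term is absorbed via the Poincar\'e inequality just proved, using $\|u\|_{L^2(K_1)}\le C\|u\|_{2,\delta^*}\le C'\|\nabla u\|_2$. The main obstacle is the Poincar\'e step itself: the Rellich-type embedding $W^{1,2}_{\delta^*}\hookrightarrow L^2_{\delta'}$ is compact only for $\delta'>\delta^*$, so one cannot extract strong convergence in the critical space $L^2_{\delta^*}$. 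Ruling out concentration of mass at infinity is precisely what forces the combination of local compactness with the end-localized Hardy bound, and is where the asymptotically Euclidean structure enters essentially.
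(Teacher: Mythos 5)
Your proposal is correct and follows essentially the same route as the paper: a compactness/contradiction argument in which the weak limit is a constant forced to vanish by the critical weight, strong local convergence kills the interior, and an end-localized weighted Poincar\'e (Hardy) inequality---which is exactly Bartnik's Theorem 1.3(ii) that the paper cites---kills the ends. Your derivation of \eqref{eq:sobolev} directly from the Euclidean Sobolev inequality on the ends plus the already-proved \eqref{eq:poincare}, rather than rerunning the contradiction argument, is a harmless cosmetic variation.
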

\begin{proof}
Suppose to the contrary that we can find a sequence $\{u_k\}$
of smooth functions with $\|u_k\|_{2,\delta^*}=1$ and $\|\nabla u_k\|_{2}\ra 0$.
It then follows that $\{u_k\}$ is bounded in $W^{1,2}_{\delta^*}(M)$ and therefore
a subsequence (which we reduce to) converges to a weak limit $u\in W^{1,2}_{\delta^*}(M)$.
Since $\nabla u_k\ra 0$ in $L^2$ we conclude that $u$ is constant, and since $\delta^*<0$
we conclude that $u=0$.   Moreover, $u_k\ra 0$ strongly in $L^2$ on compact sets.

Let $\eta$ be a cutoff function that equals 1 outside of some large ball and has support
contained in the ends of $M$.  Since $\nabla u_k\ra 0$ in $L^2(M)$ and since $u_k\ra0$ in
$L^2$ on compact sets we see that $\nabla (\eta u_k)\ra 0$ in $L^2(M)$.  Also,
since $u_k\ra0$ in $L^2$ on compact sets it follows that $(1-\eta) u_k \ra 0$ in 
$L^2_{\delta^*}$. Since $\|u_k\|_{2,\delta^*}=1$ and since 
$\|(1-\eta)u_k\|_{2,\delta^*}\ra 0$ it follows that $\|\eta u_k\|_{2,\delta^*}\ra 1$.

From the weighted Poincar\'e inequality \cite{Bartnik86}
Theorem 1.3(ii) we know that there is a constant $c$ such that for all $k$,
\begin{equation}
\|\eta u_k\|_{\overline g,2,\delta^*}\le c\|\nabla (\eta u_k)\|_{\overline g,2}
\end{equation}
where $\overline g$ is the Euclidean metric on the end.  But $g$ is comparable to
$\overline g$ on the end, so this same inequality holds for $g$ after suitably
modifying $c$. This is a contradiction.

The proof of inequality \eqref{eq:sobolev} is essentially the same as 
\eqref{eq:poincare}.
\end{proof}

Lemma \ref{lem:poincare} fails on compact manifolds due to the presence of 
the constants.  For our proofs that treat the compact and
non-compact case simultaneously it will be helpful to have 
a suitable inequality that works in both settings.
Observe that for any $\delta>0$ there exists $c_2$ such that
\begin{equation}
\label{eq:sobolev-cpct}
\|u\|_{2,\delta}+\|\nabla u\|_2 \ge c_2 \|u\|_{N}.
\end{equation}
This follows from the standard Sobolev inequality on compact manifolds and
follows trivially from inequality \eqref{eq:sobolev} on non-compact manifolds.

\section{The Yamabe Invariant of a Measurable Set} \label{sec:FirstEigenvalue}

Throughout this section, let $(M,g)$ be a $W^{2,p}_\tau$ AE manifold 
with $p>n/2$ and $\tau<0$, with the convention that a compact manifold is 
an AE manifold with zero ends. 
For $u\in C_c^\infty(M)$, $u\not\equiv 0$, the Yamabe quotient of $u$ is
\begin{equation}
Q^y_g(u) = \frac{\int a|\nabla u|^2 + R u^2}{\|u\|_{N}^2}
\end{equation}
and the Yamabe invariant of $g$ is the infimum of $Q^y_g$ taken over
$C_c^\infty(M)$. Here and in other notations we will drop the decoration
$g$ when the metric is understood. Our principal goal in this section is to 
define a similar conformal invariant for arbitrary 
measurable subsets of $M$ and to analyze its properties.

It will be convenient to work with a complete function space, and we claim
that the domain of $Q^y$ can be extended to 
$W^{1,2}_{\delta^*}\setminus\{0\}$ where $\delta^*$ is defined in equation
\eqref{eq:deltastar}.
To see this, first note from the embedding properties of 
weighted Sobolev spaces that $W^{1,2}_{\delta^*}$ embeds continuously in $L^N=L^N_{\delta^*}$
and that $u\mapsto \nabla u$ is continuous from $W^{1,2}_{\delta^*}$ to $L^2$;
indeed $\delta^*$ is the minimum decay needed to ensure these conditions.
To treat the scalar curvature term in $Q^y$, we have the following.
\begin{lem}\label{lem:RIntegralBound}
The map
\begin{equation}\label{eq:loworder}
u\mapsto \int R u^2
\end{equation}
is weakly continuous on $W^{1,2}_{\delta^*}$.  Moreover, for any $\delta>\delta^*$
and $\epsilon>0$, there is constant $C>0$ such that
\begin{equation}\label{eq:RuSquaredEstimate}
\left| \int R u^2 \right| \le \epsilon \|\nabla u\|_2^2 + C \|u\|_{2,\delta}^2.
\end{equation}
\end{lem}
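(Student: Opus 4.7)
The plan is to exploit the fact that since $g\in W^{2,p}_\tau$, the scalar curvature lies in $L^p_{\tau-2}$, combined with the weighted Sobolev machinery set up in Section~\ref{sec:AEManifolds}. Setting $p'=p/(p-1)$, the weighted H\"older inequality (with the two weights summing to $-n$, as is needed to land in the unweighted $L^1$) supplies the baseline bound
\begin{equation*}
\int |Ru^2| \;\le\; \|R\|_{p,\tau-2}\, \|u\|_{2p',\,(2-n-\tau)/2}^{\,2}.
\end{equation*}
Two numerical facts power the rest of the argument: $p>n/2$ forces $2p'<N$ (strictly), and $\tau<0$ forces $(2-n-\tau)/2>\delta^*$ (strictly). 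These are exactly the strict inequalities required by the continuous and compact embedding criteria quoted in Section~\ref{sec:AEManifolds}.

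For the weak continuity claim, those strict inequalities give a compact embedding $W^{1,2}_{\delta^*}\hookrightarrow L^{2p'}_{(2-n-\tau)/2}$. If $u_k\rightharpoonup u$ in $W^{1,2}_{\delta^*}$, then $u_k\to u$ strongly in $L^{2p'}_{(2-n-\tau)/2}$. Expanding $u_k^2-u^2=(u_k-u)(u_k+u)$ and applying weighted H\"older first to the product (producing convergence in $L^{p'}_{2-n-\tau}$) and then to pair against $R\in L^p_{\tau-2}$ yields $\int Ru_k^2\to\int Ru^2$.

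For the estimate \eqref{eq:RuSquaredEstimate}, my strategy is to split $R=\eta_rR+(1-\eta_r)R$, where $\eta_r$ is a cutoff vanishing on $\{\rho\le r\}$ and equal to one on $\{\rho\ge 2r\}$. Dominated convergence gives $\|\eta_rR\|_{p,\tau-2}\to 0$ as $r\to\infty$, so the displayed H\"older bound combined with the continuous embedding $L^N_{\delta^*}\hookrightarrow L^{2p'}_{(2-n-\tau)/2}$ and either Sobolev inequality \eqref{eq:sobolev} or \eqref{eq:sobolev-cpct} controls the tail piece by $\epsilon_0\bigl(\|\nabla u\|_2^2+\|u\|_{2,\delta}^2\bigr)$ with $\epsilon_0$ arbitrarily small for $r$ large. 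For the compactly supported remainder on its support $K$, the weighted and unweighted $L^2$ norms are equivalent, so standard Lebesgue interpolation $\|u\|_{L^{2p'}(K)}^{2}\le \|u\|_{L^N(K)}^{2\theta}\|u\|_{L^2(K)}^{2(1-\theta)}$ (valid for some $\theta\in(0,1)$ since $2<2p'<N$) followed by Sobolev and Young's inequality yields a bound of the desired form $\tfrac12\epsilon\|\nabla u\|_2^2+C\|u\|_{2,\delta}^2$. Summing the two pieces gives the inequality.

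The main technical nuisance will be the weight-exponent bookkeeping, verifying that the slack $p-n/2>0$ and $-\tau>0$ propagates strictly through each embedding and that the final absorption via Young's inequality goes through. No deep idea is required; it is essentially an exercise in tracking margins. On compact manifolds (the zero-ends case) the tail split is trivial and weighted and unweighted $L^2$ coincide, so the argument collapses to a classical Sobolev--Young interpolation.
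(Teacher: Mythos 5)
Your proof is correct, and it takes a genuinely different route from the paper's. The paper proves both claims by passing through Triebel's weighted fractional interpolation spaces: it embeds $W^{1,2}_{\delta^*}$ compactly into $W^{s,2}_\sigma$ with $\frac1p=\frac{2s}{n}$ and $\sigma=\delta^*-\tau/2$ (your $\sigma=(2-n-\tau)/2$ is the same number), embeds that into $L^q_\sigma$ with $\frac1p+\frac2q=1$, and then derives \eqref{eq:RuSquaredEstimate} from the interpolation inequality $\|u\|_{W^{s,2}_\sigma}\le C\|u\|_{W^{1,2}_{\delta^*}}^s\|u\|_{2,\delta}^{1-s}$ plus the arithmetic--geometric mean inequality; a mild drawback is that this produces the estimate only for the particular $\delta$ satisfying $s\delta^*+(1-s)\delta=\sigma$, and the general case requires the slightly awkward remark about perturbing $\tau$ or $p$. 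You instead stay entirely within integer-order weighted Sobolev/Lebesgue embeddings: the same weighted H\"older bound through $L^{2p'}_\sigma$, with the key strict inequalities $2p'<N$ and $\sigma>\delta^*$ coming from $p>n/2$ and $\tau<0$, gives weak continuity via the compact embedding exactly as in the paper's spirit; and for the quantitative estimate you replace fractional interpolation by a cutoff decomposition of $R$ near infinity (smallness of $\|\eta_r R\|_{p,\tau-2}$ handles the tail against $\|\nabla u\|_2^2$ via \eqref{eq:sobolev}) together with classical three-exponent Lebesgue interpolation $2<2p'<N$ on the compact remainder, followed by Young's inequality. What your version buys is the avoidance of the Triebel machinery altogether and an estimate valid directly for every $\delta$ (the weight only enters through the equivalence of $\|u\|_{L^2(K)}$ and $\|u\|_{2,\delta}$ on the compact set $K$); what the paper's version buys is a single uniform argument with no case split between the tail and the core, and no cutoff. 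All of your exponent and weight bookkeeping checks out, including the identity $(\tau-2)+2\sigma=-n$ needed for the H\"older pairing and the product estimate $\|fg\|_{p',2\sigma}\le\|f\|_{2p',\sigma}\|g\|_{2p',\sigma}$ used in the weak-continuity step.
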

\begin{proof}
Recall that $R\in L^p_{\tau-2}$ where $p>n/2$ and $\tau<0$.
So there is an $s\in (0,1)$ such that
\begin{equation}
\frac 1p= s\frac 2n.
\end{equation}
Set $\sigma=\delta^*-\tau/2$.  Since $s<1$ and $\sigma>\delta^*$,
$W^{1,2}_{\delta^*}$ embeds compactly in $W^{s,2}_{\sigma}$,
where the interpolation space $W^{s,2}_{\sigma}$
is described in 
\cite{triebel-weighted-one}\cite{triebel-weighted-two}.
Moreover, $W^{s,2}_{\sigma}$ embeds continuously in $L^q_\sigma$
where
\begin{equation}
\frac{1}{q} = \frac{1}{2} - \frac{s}{n} =  \frac{1}{2}\left( 1-\frac{1}{p}\right).
\end{equation}
Since
\begin{equation}
\frac{1}{p} + \frac{2}{q} = 1
\end{equation}
and since
\begin{equation}
\tau-2 + 2\sigma = 2\delta^* -2 = -n,
\end{equation}
H\"older's inequality implies the map \eqref{eq:loworder}
is continuous on $L^q_\sigma$, and from the previously mentioned compact
embedding the map \eqref{eq:loworder} is therefore weakly continuous on $W^{1,2}_{\delta^*}$.
Moreover, H\"older's inequality implies there is a constant $C$ such that
\begin{equation}\label{eq:loworder1}
\left| \int Ru^2\right| \le C \|u\|_{W^{s,2}_\sigma}^2.
\end{equation}
From interpolation \cite{triebel-weighted-two} we have
\begin{equation}\label{eq:loworder2}
\|u\|_{W^{s,2}_\sigma} \le C \|u\|_{W^{1,2}_{\delta^*}}^s \|u\|_{2,\delta}^{1-s}
\end{equation}
where $\delta$ satisfies
\begin{equation}
s\delta^* + (1-s)\delta = \sigma.
\end{equation}
Since $\sigma  = \delta^* - \tau/2$, we find
\begin{equation}
\delta = \delta^* - \frac{\tau/2}{1-s},
\end{equation}
and since $\tau<0$ and $s\in(0,1)$, $\delta>\delta^*$.  Indeed, by raising $\tau$ close to zero,
or lowering $p$ close to $n/2$ (which raises $s$ up to 1), we can obtain
any particular $\delta>\delta^*$. We conclude from
inequalities \eqref{eq:loworder1}, \eqref{eq:loworder2}
and the arithmetic-geometric mean inequality that
\begin{equation}
\left| \int R u^2 \right| \le \epsilon \|\nabla u\|_{W^{1,2}_{\delta^*}}^2 + C \|u\|_{2,\delta}^2.
\end{equation}
This establishes inequality \eqref{eq:RuSquaredEstimate} on a compact manifold, and
we obtain \eqref{eq:RuSquaredEstimate} in the non-compact case by applying
the Poincar\'e inequality \eqref{eq:poincare}.
\end{proof}
\begin{cor}\label{cor:uppersc}
The map
\begin{equation}
u\mapsto \int a|\nabla u|^2 + Ru^2
\end{equation}
is weakly upper semicontinuous on $W^{1,2}_{\delta^*}$.
\end{cor}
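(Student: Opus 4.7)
The plan is to decompose the functional as $F(u) = a\|\nabla u\|_2^2 + \int R u^2$ and treat the two pieces separately along a weakly convergent sequence $u_n \rightharpoonup u$ in $W^{1,2}_{\delta^*}$.

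The curvature piece is packaged directly in Lemma \ref{lem:RIntegralBound}, which asserts that $u \mapsto \int R u^2$ is weakly continuous on $W^{1,2}_{\delta^*}$; hence $\int R u_n^2 \to \int R u^2$. For the gradient piece, observe that with $\delta = \delta^*$ and $p = 2$ the exponent $-\delta^* - n/p + 1$ appearing in the definition \eqref{eq:weighted-norm} of the $W^{1,2}_{\delta^*}$-norm vanishes, so the gradient component of that norm is exactly $\|\nabla u\|_2$. Weak convergence $u_n \rightharpoonup u$ in $W^{1,2}_{\delta^*}$ therefore implies $\nabla u_n \rightharpoonup \nabla u$ in $L^2(M)$, and the standard weak semicontinuity of the squared Hilbert-space norm gives $\|\nabla u\|_2^2 \le \liminf \|\nabla u_n\|_2^2$; multiplication by $a>0$ preserves the direction. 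Adding the two contributions yields
\[
\int a|\nabla u|^2 + R u^2 \;\le\; \liminf_{n\to\infty} \int a|\nabla u_n|^2 + R u_n^2,
\]
which is the one-sided semicontinuity stated in the corollary.

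There is no real obstacle to overcome: the only substantive analytic input is the subcritical bound \eqref{eq:RuSquaredEstimate} already established in Lemma \ref{lem:RIntegralBound}, and the gradient term is a routine appeal to weak convergence in a Hilbert space. I note that the inequality produced by this natural argument is lower, not upper, semicontinuity: this is the direction forced by the weak lower semicontinuity of $\|\nabla u\|_2^2$, and it is also precisely the direction needed for the forthcoming infimum-based definition of $y(V)$, so I read the word \emph{upper} in the statement of the corollary as a slip for \emph{lower}.
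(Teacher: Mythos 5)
Your proof is correct and is essentially the paper's own argument: the same decomposition, with Lemma \ref{lem:RIntegralBound} giving weak continuity of $u\mapsto\int Ru^2$ and the standard weak semicontinuity of the Hilbert-space norm handling $\|\nabla u\|_2^2$ (the passage from weak convergence in $W^{1,2}_{\delta^*}$ to $\nabla u_k\rightharpoonup\nabla u$ in $L^2$ via the bounded linear map $u\mapsto\nabla u$ is exactly right). Your terminological remark is also on target: the inequality this argument yields, $\int a|\nabla u|^2+Ru^2\le\liminf_{k}\int a|\nabla u_k|^2+Ru_k^2$, is weak \emph{lower} semicontinuity in the standard sense, and it is precisely the direction the paper needs when it invokes this corollary to extract minimizers (e.g.\ in Proposition \ref{prop:eigenfunctions} and Lemma \ref{lem:continuityfromabove}), so the word ``upper'' in the statement and proof is a consistent mislabeling rather than a different claim.
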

\begin{proof}
This follows from the weak upper semicontinuity of $u\mapsto \int|\nabla u|^2$
along with Lemma \ref{lem:RIntegralBound}.
\end{proof}

\begin{defn}
Let $V\subseteq M$ be a measurable set.  The \textit{test functions supported in $V$}
are 
\begin{equation}
A(V) := \left\{ u\in W^{1,2}_{\delta^*}(M): u\not\equiv 0, u|_{V^c}=0\right\}.
\end{equation}
\end{defn}

\begin{defn}
Let $V\subseteq M$ be measurable. The \textit{Yamabe invariant} of $V$ is
\begin{equation}\label{eq:yamabe-inv}
y_g(V) = \inf_{u\in A(V)} Q^y(u).
\end{equation}
If $V$ has measure zero, and hence $A(V)$ is empty, we use the convention
$y_g(V)=\infty$.
\end{defn}
In principle, the infimum in the definition of the Yamabe invariant 
could be $-\infty$.  The following estimate, which will be useful
later in the paper as well, shows that this is not possible. 

\begin{lem}\label{lem:Qplusdelta}
Let $\delta\in\Reals$.
There exist positive constants $C_1$ and $C_2$ such that for all $u\in W^{1,2}_{\delta^*}$,
\begin{equation}\label{eq:basicbound}
\|u\|_{W^{1,2}_{\delta^*}} \le C_1\left[\int a|\nabla u|^2 + R u^2\right] 
+ C_2\|u\|_{2,\delta}^2.
\end{equation}
\end{lem}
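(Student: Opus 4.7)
The plan is to combine Lemma \ref{lem:RIntegralBound}, which lets me absorb the scalar-curvature term into the Dirichlet energy, with the Poincar\'e inequality of Lemma \ref{lem:poincare} to recover control of the weighted $L^2$ norm $\|u\|_{2,\delta^*}$. (I read the inequality as being between squared norms, since the right-hand side is quadratic in $u$.)

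First I would fix some $\delta' > \delta^*$, to be specialized below in terms of $\delta$. Applying Lemma \ref{lem:RIntegralBound} with this $\delta'$ and with $\epsilon = a/2$ gives
\[
\left| \int R u^2 \right| \le \tfrac{a}{2}\|\nabla u\|_2^2 + C \|u\|_{2,\delta'}^2,
\]
and rearranging yields
\[
\tfrac{a}{2}\|\nabla u\|_2^2 \le \int \bigl[a|\nabla u|^2 + R u^2\bigr] + C \|u\|_{2,\delta'}^2.
\]
This already controls $\|\nabla u\|_2^2$ in the desired form, with $\delta$ temporarily replaced by $\delta'$.

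To control the remaining piece $\|u\|_{2,\delta^*}^2$ of $\|u\|_{W^{1,2}_{\delta^*}}^2$ I split into cases. In the non-compact case, inequality \eqref{eq:poincare} gives $\|u\|_{2,\delta^*}^2 \le c_1^{-2}\|\nabla u\|_2^2$, which combines with the bound above. In the compact case (zero ends) the weight $\rho$ is bounded, so all weighted $L^2$ norms are equivalent to $\|u\|_2$, and $\|u\|_{2,\delta^*}^2 \le C\|u\|_{2,\delta'}^2$ trivially. In either setting $\|u\|_{W^{1,2}_{\delta^*}}^2$ is bounded by $C_1[\int a|\nabla u|^2 + Ru^2] + C_2 \|u\|_{2,\delta'}^2$.

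Finally I would trade $\delta'$ for $\delta$. If $\delta > \delta^*$ I simply take $\delta' = \delta$ from the outset. Otherwise $\delta \le \delta^* < \delta'$, and since $\rho \ge 1$ the weight $\rho^{-\delta' - n/2}$ is pointwise dominated by $\rho^{-\delta - n/2}$, so $\|u\|_{2,\delta'} \le \|u\|_{2,\delta}$ directly. Collecting constants then produces \eqref{eq:basicbound}. The only subtle point is reconciling the restriction $\delta' > \delta^*$ demanded by Lemma \ref{lem:RIntegralBound} with the fact that the statement allows arbitrary real $\delta$, and this is handled cleanly by the monotonicity of the weighted $L^2$ norms in the weight parameter.
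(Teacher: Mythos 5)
Your argument is correct and matches the paper's proof essentially step for step: both apply Lemma \ref{lem:RIntegralBound} with $\epsilon=a/2$ to absorb $\int Ru^2$ into the Dirichlet energy, then invoke the Poincar\'e inequality \eqref{eq:poincare} (or, in the compact case, the equivalence of the weighted norms) to recover the full $W^{1,2}_{\delta^*}$ norm. Your explicit reduction from arbitrary $\delta\in\Reals$ to $\delta>\delta^*$ via monotonicity of the weighted norms is exactly the reduction the paper makes implicitly, and your reading of the left-hand side as a squared norm is the intended one.
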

\begin{proof}
It is enough to establish inequality \eqref{eq:basicbound} assuming $\delta>\delta^*$.
From Lemma \ref{lem:RIntegralBound}, there is a constant $C$ such that
\begin{equation}
\left|\int R u^2 \right| \le \frac{a}{2} \int |\nabla u|^2 + C \|u\|_{2,\delta}^2
\end{equation}
and hence
\begin{equation}
\int a|\nabla u|^2 + R u^2 \ge \frac{a}{2}\int |\nabla u|^2 - C\|u\|_{2,\delta}^2.
\end{equation}
Consequently
\begin{equation}
\int |\nabla u|^2 \le \frac{2}{a}\left[\int a|\nabla u|^2 + R u^2\right] + \frac{2C}{a} \|u\|_{2,\delta}^2.
\end{equation}
Inequality \eqref{eq:basicbound} now follows trivially in the compact case, 
and follows from the Poincar\'e inequality \eqref{eq:poincare}
in the non-compact case.
\end{proof}

\begin{lem}\label{lem:YamabeBoundedBelow}
For every measurable set $V$, $y(V)>-\infty$.
\end{lem}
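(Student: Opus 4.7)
The plan is to use the degree-zero homogeneity of $Q^y$ to reduce to the unit $L^N$ sphere and then absorb the scalar curvature contribution using Lemma \ref{lem:RIntegralBound}. Since $Q^y(\lambda u) = Q^y(u)$ for every nonzero scalar $\lambda$, it suffices to produce a constant $C_0$ depending only on $(M,g)$ (and in particular not on $V$) such that
\[
\int a|\nabla u|^2 + R u^2 \ge -C_0
\]
for every $u \in W^{1,2}_{\delta^*}$ with $\|u\|_N = 1$; bounding the numerator of $Q^y$ below on this set immediately bounds $y(V)$ below.

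Fix any $\delta > \delta^*$. I would first establish the Hölder bound $\|u\|_{2,\delta}^2 \le K \|u\|_N^2$ with a constant $K$ independent of $u$. Applying Hölder's inequality with conjugate exponents $N/2$ and $N/(N-2) = n/2$ gives
\[
\|u\|_{2,\delta}^2 = \int \rho^{-(2\delta + n)} u^2 \le \|u\|_N^2 \left( \int \rho^{-(2\delta+n)n/2} \right)^{2/n},
\]
and the weight integral is finite precisely when $\delta > \delta^*$: on each AE end $\rho \sim r$, so convergence demands $-(2\delta+n)n/2 + n - 1 < -1$, i.e., $\delta > (2-n)/2$; on any compact piece the weight integral is trivially finite.

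Next, I would apply Lemma \ref{lem:RIntegralBound} with $\epsilon = a/2$ at this same $\delta$ to conclude
\[
\int a|\nabla u|^2 + R u^2 \ge \frac{a}{2}\|\nabla u\|_2^2 - C\|u\|_{2,\delta}^2 \ge -CK\|u\|_N^2 = -CK
\]
for every $u$ normalized as above, whence $y(V) \ge -CK > -\infty$ uniformly in $V$.

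The argument is essentially routine; the only delicate point is recognizing that the range $\delta > \delta^*$ permitted by Lemma \ref{lem:RIntegralBound} coincides exactly with the range in which $\|\cdot\|_{2,\delta}$ is dominated by $\|\cdot\|_N$, both controlled by the same sharp threshold $\delta^* = (2-n)/2$.
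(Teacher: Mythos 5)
Your argument is correct and rests on the same two ingredients as the paper's proof: the continuous embedding $L^N\hookrightarrow L^2_\delta$ for $\delta>\delta^*$ (your H\"older computation) and estimate \eqref{eq:RuSquaredEstimate} from Lemma \ref{lem:RIntegralBound}. You simply package them as a direct uniform lower bound on the numerator over the unit $L^N$ sphere, whereas the paper routes the same estimates through a minimizing sequence and Lemma \ref{lem:Qplusdelta}; the content is the same, and your version has the small bonus of making the lower bound manifestly independent of $V$.
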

\begin{proof}
Let $u_k$ be some minimizing sequence for $Q^y$ normalized
so that $\|u_k\|_{N} = 1$. Lemma \ref{lem:Qplusdelta} and the continuous embedding
$L^N\hookrightarrow L^2_{\delta}$ implies that $u_k$ is uniformly
bounded in $W^{1,2}_{\delta^*}$. Estimate \eqref{eq:RuSquaredEstimate} then
implies that $Q(u_k)$ is uniformly bounded below.
\end{proof}

As one might expect, $y(V)$ is a conformal invariant.
\begin{lem}\label{ConformalInvariance}
Suppose $g' = \phi^{N-2}g$ is a conformally related metric with
$\phi-1\in W^{2,p}_{\tau}$. Then
\begin{equation}
y_{g'}(V) = y_g(V).
\end{equation}
\end{lem}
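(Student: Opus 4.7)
The plan is to exploit the standard transformation law for the conformal Laplacian under the conformal change $g' = \phi^{N-2} g$ to identify the Yamabe quotient $Q^y_{g'}$ applied to $u$ with the Yamabe quotient $Q^y_g$ applied to $\phi u$, then argue that the map $u \mapsto \phi u$ gives a bijection $A(V) \to A(V)$ of the test function classes (which depend only on the support of $u$, not the metric).

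First I would recall the pointwise identity
\begin{equation*}
-a\Delta_{g'} u + R_{g'} u = \phi^{1-N}\bigl(-a\Delta_g(\phi u) + R_g(\phi u)\bigr),
\end{equation*}
which holds wherever $\phi$ is sufficiently regular (and here $\phi \in 1 + W^{2,p}_\tau$ is in particular $C^0_\tau$ and strictly positive). Combining this with the volume element identity $dV_{g'} = \phi^N \, dV_g$ and integrating by parts against $u$ gives
\begin{equation*}
\int a|\nabla_{g'} u|^2 + R_{g'} u^2 \, dV_{g'} = \int a|\nabla_g(\phi u)|^2 + R_g (\phi u)^2 \, dV_g.
\end{equation*}
Since $\int |u|^N \, dV_{g'} = \int |\phi u|^N \, dV_g$, the $L^N$ norms transform analogously, so $Q^y_{g'}(u) = Q^y_g(\phi u)$. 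Both sides of this identity first need to be verified for smooth compactly supported $u$ (where the integration by parts is unambiguous) and then extended by density.

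The key technical step is then to show that multiplication by $\phi$ is a bijection from $A(V)$ to itself, where $A(V) \subset W^{1,2}_{\delta^*}$. Since $\phi > 0$ with $\phi \to 1$ at infinity, $\phi$ and $\phi^{-1}$ are bounded above and below by positive constants. Writing $\phi = 1 + \psi$ with $\psi \in W^{2,p}_\tau$, $\tau < 0$, and $p > n/2$, one checks that $\psi$ is a multiplier on $W^{1,2}_{\delta^*}$: the $L^2_{\delta^*}$ estimate on $\psi u$ is immediate from boundedness of $\psi$, while the gradient term splits as $\psi \nabla u + u \nabla \psi$ and one estimates $u \nabla \psi$ in $L^2$ using H\"older together with the embeddings of weighted Sobolev spaces (exactly as in the proof of Lemma~\ref{lem:RIntegralBound}, since $\nabla\psi \in W^{1,p}_{\tau-1}$ plays the same role as $R \in L^p_{\tau-2}$). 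The same argument applied to $\phi^{-1} = 1 + (\phi^{-1} - 1)$, noting $\phi^{-1} - 1$ lies in $W^{2,p}_\tau$ as well, shows the inverse map is bounded. Crucially, multiplication by $\phi$ preserves supports (since $\phi > 0$ everywhere), so it maps $A(V)$ bijectively onto $A(V)$.

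With these two pieces in hand, the conclusion is immediate:
\begin{equation*}
y_{g'}(V) = \inf_{u \in A(V)} Q^y_{g'}(u) = \inf_{u \in A(V)} Q^y_g(\phi u) = \inf_{v \in A(V)} Q^y_g(v) = y_g(V),
\end{equation*}
where the third equality uses that $v = \phi u$ ranges over all of $A(V)$ as $u$ does. I expect the only subtlety is verifying the multiplier property of $\phi$ on $W^{1,2}_{\delta^*}$; everything else is bookkeeping with the conformal transformation law.
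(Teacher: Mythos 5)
Your proposal is correct and follows essentially the same route as the paper: the conformal transformation law plus integration by parts yields $Q^y_{g'}(u)=Q^y_g(\phi u)$, and invariance of $A(V)$ under multiplication by $\phi$ finishes the argument. The paper simply asserts the invariance of $A(V)$, whereas you supply the multiplier estimates justifying it; that is a worthwhile elaboration but not a different proof.
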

\begin{proof}
The conformal transformation laws
\begin{equation}
\begin{aligned}
dV_{g'} &= \phi^{N} dV_{g}\\
R_{g'} &= \phi^{1-N}( -a\Delta_{g} \phi + R_g \phi)
\end{aligned}
\end{equation}
together with an integration by parts imply
\begin{equation}
\int_M |\nabla u|_{g'}^2 + R_{g'}u^2\; dV_{g'} = 
\int_M |\nabla (\phi u)|_{g}^2 + R_{g}(\phi u)^2\; dV_{g}
\end{equation}
for all $u\in W^{1,2}_{\delta^*}(M)$.
Since $\| \cdot \|_{g',N} = \|\phi \cdot \|_{g,N}$, it follows that
\begin{equation}\label{eq:ConformalEquality}
Q^y_{g'}(u) = Q^y_{g}(\phi u)
\end{equation}
for all $u\in W^{1,2}_{\delta^*}(M)$ as well.  Since
$A(V)$ is invariant under multiplication by $\phi$, 
$y_{g'}(V) = y_g(V)$.
\end{proof}

We will primarily be interested in the sign of the Yamabe invariant.
\begin{defn}
A measurable set $V\subseteq M$ is called \textit{Yamabe positive}, 
\textit{negative}, or \textit{null} depending on the
sign of $y(V)$.
\end{defn}

The Yamabe invariant involves the critical Sobolev exponent $N$
and hence can be technically difficult to work with.
On a compact manifold, however,
the sign of the Yamabe invariant can be determined from the sign
of the first eigenvalue of the conformal Laplacian. These eigenvalues
enjoy superior analytical properties, and we now describe how
to extend this approach to measurable subsets of compact or asymptotically 
Euclidean manifolds.

For $\delta>\delta^*$ we define the Rayleigh quotients
\begin{equation}
Q_{g,\delta}(u) = \frac{\int a|\nabla u|^2 + R u^2}{\|u\|_{2,\delta}^2}.
\end{equation}
Our previous arguments for the Yamabe quotient
imply that $Q_{g,\delta}$ is well-defined for any $u\in W^{1,2}_{\delta^*}\setminus\{0\}$,
and indeed $Q_{g,\delta}$ is continuous on this set.

\begin{defn} The first \textit{$\delta$-weighted eigenvalue of the conformal Laplacian} is
\begin{equation}\label{eq:EigenvalueConfLaplacian}
\lambda_{g,\delta}(V) = \inf_{u\in A(V)} Q_{g,\delta}(u).
\end{equation}
By convention, if $V$ has measure zero then $\lambda_{g,\delta}(V)=\infty$.  
We will write $Q_\delta$ and $\lambda_\delta$ when the metric is understood.
\end{defn}

The value of $\lambda_\delta(V)$ is not particularly meaningful; it depends
on the choice of weight function $\rho$ and it is not a conformal invariant.
Nevertheless, its sign is a conformal invariant independent of the choice of $\rho$.

\begin{prop}\label{cor:MeasTFAE}
For any measurable set $V\subseteq M$,
the following are equivalent:
\begin{enumerate}
\item $y(V)>0$.
\item $\lambda_\delta(V)>0$ for all $\delta>\delta^*$.
\item $\lambda_\delta(V)>0$ for some $\delta>\delta^*$.
\end{enumerate}
\end{prop}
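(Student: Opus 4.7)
The plan is to establish the cyclic implications $(1) \Rightarrow (2) \Rightarrow (3) \Rightarrow (1)$; the middle step is immediate, so only the other two require argument. If $V$ has measure zero the statement is vacuous under the convention $y(V)=\lambda_\delta(V)=\infty$, so I assume $V$ has positive measure throughout.

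For $(1) \Rightarrow (2)$, fix $\delta > \delta^*$. The weighted Sobolev embedding gives a continuous inclusion $L^N = L^N_{\delta^*} \hookrightarrow L^2_\delta$ (on a compact manifold this is just the ordinary $L^N \hookrightarrow L^2$), producing a constant $C_\delta$ with $\|u\|_{2,\delta}^2 \le C_\delta \|u\|_N^2$ for all $u \in W^{1,2}_{\delta^*}$. Any $u \in A(V)$ is nonzero in $L^2_{\delta^*}$ and hence in $L^N$, so hypothesis (1) gives
\begin{equation*}
\int a|\nabla u|^2 + R u^2 \;\ge\; y(V)\,\|u\|_N^2 \;\ge\; \frac{y(V)}{C_\delta}\,\|u\|_{2,\delta}^2,
\end{equation*}
and taking the infimum yields $\lambda_\delta(V) \ge y(V)/C_\delta > 0$.

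For $(3) \Rightarrow (1)$, suppose $\lambda_\delta(V) > 0$ for some $\delta > \delta^*$. The strategy is to bootstrap the weighted-$L^2$ control furnished by the eigenvalue hypothesis against Lemma \ref{lem:RIntegralBound} to dominate $\|\nabla u\|_2^2$, then feed both bounds into the Sobolev-type inequality \eqref{eq:sobolev-cpct}. Applying Lemma \ref{lem:RIntegralBound} with $\epsilon = a/2$ produces a constant $C$ so that
\begin{equation*}
\int a|\nabla u|^2 + R u^2 \;\ge\; \tfrac{a}{2}\|\nabla u\|_2^2 \;-\; C\|u\|_{2,\delta}^2,
\end{equation*}
while the hypothesis gives $\|u\|_{2,\delta}^2 \le \lambda_\delta(V)^{-1}\int a|\nabla u|^2 + Ru^2$. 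Substituting the second bound into the first and rearranging, I obtain constants $C_1,C_2$ (depending only on $a$, $C$, and $\lambda_\delta(V)$) with
\begin{equation*}
\|\nabla u\|_2^2 \;\le\; C_1 \int a|\nabla u|^2 + Ru^2, \qquad \|u\|_{2,\delta}^2 \;\le\; C_2 \int a|\nabla u|^2 + Ru^2.
\end{equation*}
Now \eqref{eq:sobolev-cpct} (applied with any positive $\delta'$ in the compact case, where all weighted $L^2$ norms are equivalent, or \eqref{eq:sobolev} directly in the AE case) controls $\|u\|_N^2$ by a constant multiple of $\|\nabla u\|_2^2 + \|u\|_{2,\delta'}^2$, and both of these are in turn controlled by $\int a|\nabla u|^2 + R u^2$. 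Thus $Q^y(u)$ enjoys a uniform positive lower bound on $A(V)$, whence $y(V) > 0$.

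The argument is really just bookkeeping, and the only mild technical point is aligning the weight parameter in the Sobolev-type inequality with the one in $\lambda_\delta(V)$. This is absorbed by the equivalence of weighted norms on compact manifolds and by \eqref{eq:sobolev} in the AE setting, so no genuine obstacle arises; the conceptual content is simply that positivity of $\lambda_\delta(V)$ suffices to absorb the nonpositive part of the scalar-curvature term uniformly over $A(V)$.
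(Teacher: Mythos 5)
Your proposal is correct and uses exactly the same ingredients as the paper's proof: the embedding $L^N\hookrightarrow L^2_\delta$ for $(1)\Rightarrow(2)$, and, for $(3)\Rightarrow(1)$, Lemma \ref{lem:RIntegralBound} with $\epsilon=a/2$ combined with the Sobolev inequality \eqref{eq:sobolev-cpct}. The only difference is that you run the estimate directly to get a quantitative positive lower bound on $Q^y$ over $A(V)$, whereas the paper argues by contradiction with a normalized minimizing sequence; this is a stylistic rearrangement of the same argument, and both are valid.
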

\begin{proof}  We assume that $V$ has positive measure since
the equivalence is trivial otherwise.
The implication $1 \Rightarrow 2$ follows from the inequality
$\|u\|_{2,\delta} \leq C \|u\|_{N}$ applied to $Q^y$. The implication
$2\Rightarrow 3$ is trivial. So it remains to show that $3 \Rightarrow 1$.

Let $V$ be a measurable set with $\lambda_\delta(V)>0$ for some
 $\delta>\delta^*$. Suppose to produce a contradiction
that $y(V)\leq 0$. Then there is a sequence $u_k \in A(V)$, normalized so that 
$\int a|\nabla u_k|^2 + \|u_k\|_{2,\delta}^2 = 1$, such that $Q^y(u_k) \leq 1/k$. Then 
\begin{align}\label{eq:SignEquivalence1}
\lambda_\delta(V) \|u_k\|_{2,\delta}^2\leq \int a|\nabla u_k|^2 + R u_k^2
      \leq \frac1k \|u_k\|_N^2 \leq \frac{c}k \left[\int a |\nabla u_k|^2 
      +\|u_k\|_{2,\delta}^2\right] \leq \frac{c}{k}
\end{align} by the Sobolev inequality \eqref{eq:sobolev-cpct}.
In particular, $\|u_k\|_{2,\delta}^2 \to 0$. Using inequality 
\eqref{eq:SignEquivalence1}, we also find that
\begin{equation}
\int R u_k^2 \leq \frac{c}k - \int a |\nabla u|^2 \to -1.
\end{equation} However, by Lemma \ref{lem:RIntegralBound}, there exists $C>0$ such that 
\begin{equation}
\left|\int Ru_k^2\right| \leq \frac a2 \|\nabla u_k\|^2_2 + C \|u_k\|_{2,\delta}^2 
      \to \frac12,
\end{equation} which is a contradiction. 
\end{proof}

\begin{cor}\label{cor:SignEquivalence}
For a measurable set $V\subseteq M$, 
the signs of $y(V)$ and $\lambda_\delta(V)$ are the same
for any $\delta>\delta^*$. 
\end{cor}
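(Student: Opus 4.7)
The plan is to leverage Proposition \ref{cor:MeasTFAE} for the positive case and then handle the negative (and hence by exclusion the null) case by directly comparing the numerators of $Q^y$ and $Q_\delta$.

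First I observe the key pointwise fact: for any $u\in A(V)$ the quotients
\begin{equation*}
Q^y(u) = \frac{\int a|\nabla u|^2+Ru^2}{\|u\|_N^2},\qquad Q_\delta(u) = \frac{\int a|\nabla u|^2+Ru^2}{\|u\|_{2,\delta}^2}
\end{equation*}
have the same numerator and strictly positive denominators, so they have the same sign. This immediately gives the equivalence $y(V)<0 \iff \lambda_\delta(V)<0$: if some $u\in A(V)$ makes the numerator negative, both infima are negative, and conversely negativity of either infimum forces existence of such a $u$ (taking any element of $A(V)$ that achieves a quotient value below zero).

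Combining this with Proposition \ref{cor:MeasTFAE}, which already establishes $y(V)>0 \iff \lambda_\delta(V)>0$ for any $\delta>\delta^*$, exhausts the two strict-sign cases. The null case follows by elimination: $y(V)=0$ iff $y(V)$ is neither positive nor negative, iff $\lambda_\delta(V)$ is neither positive nor negative, iff $\lambda_\delta(V)=0$.

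I do not expect any real obstacle here; the only subtle ingredient (handling the critical Sobolev denominator $\|u\|_N$ versus the subcritical $\|u\|_{2,\delta}$) has already been absorbed into the proof of Proposition \ref{cor:MeasTFAE} via the inequality $\|u\|_{2,\delta}\le C\|u\|_N$ and the Sobolev inequality \eqref{eq:sobolev-cpct}. The corollary itself is essentially a one-line deduction, so my write-up will be very short.
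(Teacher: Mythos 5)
Your proposal is correct and follows essentially the same route as the paper: the positive case is delegated to Proposition \ref{cor:MeasTFAE}, the negative case is handled by noting that $Q^y(u)$ and $Q_\delta(u)$ share a numerator (the paper phrases this as ``choosing an appropriate test function''), and the null case follows by elimination. No gaps.
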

\begin{proof}
Proposition \ref{cor:MeasTFAE} shows that $y(V)$ is positive if and only if 
$\lambda_\delta(V)$ is also. Choosing an appropriate test function shows that $y(V)$ is
negative if and only if $\lambda_\delta(V)$ is also. Together, these imply that $y(V)$ is
zero if and only if $\lambda_\delta(V)$ is.
\end{proof}
The decay rate $\delta^*$ is critical for Corollary \ref{cor:SignEquivalence}.
For $\delta<\delta^*$, $W^{1,2}_{\delta^*}$ is not contained in 
$L^2_\delta$ and hence our definition of $\lambda_\delta$ 
does not extend to this range.  One could minimize $Q_\delta$ over smooth functions
instead to define $\lambda_\delta$, but using rescaled bump
functions on large balls as test functions it can be shown that 
$\lambda_\delta(\R^n) =0$ for $\delta<\delta^*$ despite the fact that Lemma \ref{lem:poincare}
implies $y(\R^n)>0$.   Note that we have not addressed equality in the threshold
case $\delta=\delta^*$.

We now turn to continuity properties of $\lambda_\delta$.  
Monotonicity is obvious from the definition.
\begin{lem}\label{lem:monotone} Let $\delta>\delta^*$. If $V_1$ and $V_2$ are measurable sets with $V_1\subseteq V_2$,
then $\lambda_\delta(V_1)\ge \lambda_\delta(V_2)$.
\end{lem}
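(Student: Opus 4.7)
The plan is to prove this monotonicity by unwinding the definition of $\lambda_\delta(V)$ and showing that the infimum is taken over a larger set as $V$ grows. The key observation is that the containment $V_1\subseteq V_2$ implies the reverse containment $V_2^c\subseteq V_1^c$ of complements.

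Concretely, I would first dispatch the degenerate case: if $V_1$ has measure zero, then by the convention $\lambda_\delta(V_1)=\infty$ and the inequality is trivial; and if $V_2$ has measure zero, so does $V_1\subseteq V_2$, so both sides equal $\infty$. So we may assume both sets have positive measure.

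Next, I would show the inclusion $A(V_1)\subseteq A(V_2)$. Take any $u\in A(V_1)$, so that $u\in W^{1,2}_{\delta^*}(M)$, $u\not\equiv 0$, and $u|_{V_1^c}=0$ (almost everywhere). Since $V_1\subseteq V_2$ gives $V_2^c\subseteq V_1^c$, we get $u|_{V_2^c}=0$ as well, so $u\in A(V_2)$.

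Finally, since $Q_{g,\delta}$ is defined on the common space $W^{1,2}_{\delta^*}\setminus\{0\}$ and the infimum over a larger set is no larger, we conclude
\begin{equation*}
\lambda_\delta(V_1) = \inf_{u\in A(V_1)} Q_{g,\delta}(u) \ge \inf_{u\in A(V_2)} Q_{g,\delta}(u) = \lambda_\delta(V_2).
\end{equation*}
There is no real obstacle here; the lemma is essentially a tautology, and the only thing to be mindful of is the measure-zero convention.
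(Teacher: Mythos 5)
Your proof is correct and is exactly the argument the paper has in mind when it says the monotonicity is ``obvious from the definition'': $V_1\subseteq V_2$ gives $A(V_1)\subseteq A(V_2)$, so the infimum over the larger set is no larger. Your explicit handling of the measure-zero convention matches the paper's remark that the lemma holds even for $V_1=\emptyset$ precisely because $\lambda_\delta(\emptyset)=\infty$.
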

Note that Lemma \ref{lem:monotone} holds even for $V_1=\emptyset$, and that
this relies on our definition $\lambda_\delta(\emptyset)=y(\emptyset)=\infty$.
To obtain more refined properties of $\lambda_\delta$, we start by
showing that minimizers of the Rayleigh quotients
exist and are generalized eigenfunctions.
\begin{prop}\label{prop:eigenfunctions}
Let $V$ be a measurable set with positive measure and let $\delta>\delta^*$.
There exists a non-negative $u \in A(V)$ that minimizes
$Q_\delta$ over $A(V)$.
Moreover, on any open set contained in $V$,
\begin{equation}\label{eq:eigenvalue-eq}
-a\Lap u + R u = \lambda_\delta(V) \rho^{2(\delta^*-\delta)} u.
\end{equation}
\end{prop}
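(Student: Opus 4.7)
The plan is a standard direct method argument, using the ingredients the paper has already assembled: Lemma \ref{lem:Qplusdelta} for coercivity, the compact embedding $W^{1,2}_{\delta^*}\hookrightarrow L^2_\delta$ (which holds precisely because $\delta>\delta^*$) for compactness, and the weak continuity of $u\mapsto \int Ru^2$ from Lemma \ref{lem:RIntegralBound} for lower semicontinuity of the numerator of $Q_\delta$. The Euler--Lagrange equation is then a direct variational computation against compactly supported test functions inside an open subset of $V$.

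First I would take a minimizing sequence $\{u_k\}\subset A(V)$ for $Q_\delta$, normalized so that $\|u_k\|_{2,\delta}=1$ and thus $\int a|\nabla u_k|^2+Ru_k^2\to \lambda_\delta(V)$. Replacing $u_k$ by $|u_k|$ keeps it in $A(V)$ and leaves both the numerator and denominator of $Q_\delta$ unchanged, so I may assume $u_k\ge 0$. By Lemma \ref{lem:Qplusdelta} the sequence is uniformly bounded in $W^{1,2}_{\delta^*}$. Since $\delta>\delta^*$ strictly, I can pass to a subsequence along which $u_k\to u$ weakly in $W^{1,2}_{\delta^*}$, strongly in $L^2_\delta$, and pointwise almost everywhere. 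These conclusions give $\|u\|_{2,\delta}=1$ (so $u\not\equiv 0$), $u\ge 0$ a.e., and $u|_{V^c}=0$ a.e., hence $u\in A(V)$.

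Next I would verify minimality. Weak lower semicontinuity of $u\mapsto \int|\nabla u|^2$ combined with the weak continuity of $u\mapsto \int Ru^2$ from Lemma \ref{lem:RIntegralBound} yields
\[
\int a|\nabla u|^2+Ru^2\le \liminf_k \int a|\nabla u_k|^2+Ru_k^2=\lambda_\delta(V),
\]
so $Q_\delta(u)\le \lambda_\delta(V)$. Combined with $Q_\delta(u)\ge \lambda_\delta(V)$ from $u\in A(V)$, equality holds and $u$ is a non-negative minimizer.

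Finally, for the eigenvalue equation, I would compute the first variation: for any open $U\subseteq V$ and any $\phi\in C_c^\infty(U)$, the perturbation $u+t\phi$ lies in $A(V)$ for all sufficiently small $t$, and minimality forces $\frac{d}{dt}\big|_{t=0}Q_\delta(u+t\phi)=0$. Differentiating the numerator and denominator separately and then integrating by parts yields the weak form of \eqref{eq:eigenvalue-eq} tested against $\phi$, hence the distributional identity on $U$. The one subtle point is ensuring that the weak limit $u$ is nontrivial; this is exactly where the strict inequality $\delta>\delta^*$ is used, via the compact embedding $W^{1,2}_{\delta^*}\hookrightarrow L^2_\delta$, and it parallels the reason the borderline case $\delta=\delta^*$ is excluded in the discussion following Corollary \ref{cor:SignEquivalence}.
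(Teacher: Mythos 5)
Your argument is correct and is essentially identical to the paper's own proof: a normalized minimizing sequence, boundedness via Lemma \ref{lem:Qplusdelta}, weak $W^{1,2}_{\delta^*}$ and strong $L^2_\delta$ convergence to a nontrivial limit in $A(V)$, semicontinuity of the numerator (the content of Corollary \ref{cor:uppersc}), and a first-variation computation for \eqref{eq:eigenvalue-eq}. The only cosmetic differences are that you pass to $|u_k|$ at the outset rather than replacing $u$ by $|u|$ at the end, and you use the standard name ``lower semicontinuity'' for what the paper calls weak upper semicontinuity.
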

\begin{proof}
Let $u_k$ be a minimizing sequence in $A(V)$; this uses the hypothesis
that $V$ has positive measure.  Without loss of generality we may assume that
each $\|u_k\|_{2,\delta}=1$.  Since
\begin{equation}
a\int_M |\nabla u_k|^2 + R u_k^2 = Q_\delta(u_k),
\end{equation}
and since $u_k$ is a minimizing sequence, Lemma \ref{lem:Qplusdelta}
implies $\{u_k\}$ is bounded in
$W^{1,2}_{\delta^*}(M)$ and hence converges weakly in $W^{1,2}_{\delta^*}(M)$ and
strongly in $L^2_{\delta}(M)$ to a limit $u\in W^{1,2}_{\delta^*}(M)$ with 
$\|u\|_{2,\delta}=1$.  
Since each $u_k=0$ on $V^c$,
from the strong $L^2_{\delta}$ convergence we see $u=0$ on $V^c$,
and since $u\not \equiv 0$ we conclude that $u\in A(V)$.
Weak upper semicontinuity (Corollary \ref{cor:uppersc}) implies
$u$ minimizes $Q_\delta$ over the test functions $A(V)$.  Noting
that $|u|$ is also a minimizer, we may assume $u\ge 0$.

Suppose $V$ contains an open set $\Omega$.  Then any $\phi\in C^\infty_c(\Omega)$
with $\phi\not\equiv 0$ belongs to $A(V)$, and we can differentiate
$Q_\delta(u+t\phi)$ at $t=0$ to find $u$ is a weak solution in $\Omega$ of equation
\eqref{eq:eigenvalue-eq}.
\end{proof}

\begin{lem}[Continuity from above]\label{lem:continuityfromabove}
Let $V\subseteq M$ be a measurable set.
If $\{V_k\}$ is a decreasing sequence of measurable sets with $\cap V_k = V$, then
\begin{equation}\label{eq:CtsFromAbove}
\lim_{k\ra\infty} \lambda_\delta(V_k) = \lambda_\delta(V).
\end{equation}
\end{lem}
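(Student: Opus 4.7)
The plan is to prove both inequalities in $\lim_k \lambda_\delta(V_k) = \lambda_\delta(V)$ separately. One direction is immediate from monotonicity: since $V \subseteq V_{k+1} \subseteq V_k$, Lemma \ref{lem:monotone} gives $\lambda_\delta(V_k) \le \lambda_\delta(V_{k+1}) \le \lambda_\delta(V)$, so $L := \lim_k \lambda_\delta(V_k)$ exists in $(-\infty,+\infty]$ and $L \le \lambda_\delta(V)$. The reverse inequality I would prove by contradiction, assuming $L < \lambda_\delta(V)$.

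Under this assumption $L < \infty$, so for all large $k$ the set $V_k$ has positive measure and Proposition \ref{prop:eigenfunctions} supplies non-negative minimizers $u_k \in A(V_k)$. Normalize so $\|u_k\|_{2,\delta} = 1$; then
$$\int a|\nabla u_k|^2 + R u_k^2 \;=\; Q_\delta(u_k) \;=\; \lambda_\delta(V_k) \;\longrightarrow\; L,$$
and Lemma \ref{lem:Qplusdelta} bounds $\{u_k\}$ uniformly in $W^{1,2}_{\delta^*}$. Passing to a subsequence, $u_k$ converges weakly in $W^{1,2}_{\delta^*}$ and strongly in $L^2_\delta$ (via the compact embedding $W^{1,2}_{\delta^*}\hookrightarrow L^2_\delta$, available precisely because $\delta>\delta^*$) to some limit $u$ with $\|u\|_{2,\delta} = 1$; in particular $u\not\equiv 0$.

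The main obstacle is verifying that $u \in A(V)$, i.e.\ that $u = 0$ a.e.\ on $V^c$. I would exploit the decreasing structure of the sequence: for each fixed $m$ and each $k \ge m$, the inclusion $V_k \subseteq V_m$ forces $u_k$ to vanish on $V_m^c$. Restricting the strong $L^2_\delta$ convergence to $V_m^c$ then gives $u|_{V_m^c} = 0$ in $L^2_\delta$, so $u = 0$ a.e.\ on $V_m^c$. Taking the union over $m$ and using $V^c = \bigcup_m V_m^c$ yields $u = 0$ a.e.\ on $V^c$, so $u\in A(V)$.

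To close the contradiction I apply the weak semicontinuity of the numerator provided by Corollary \ref{cor:uppersc}:
$$\lambda_\delta(V) \;\le\; Q_\delta(u) \;=\; \int a|\nabla u|^2 + R u^2 \;\le\; \liminf_k \int a|\nabla u_k|^2 + R u_k^2 \;=\; L,$$
contradicting $L < \lambda_\delta(V)$. The entire argument hinges on the compactness of $W^{1,2}_{\delta^*}\hookrightarrow L^2_\delta$: it simultaneously preserves $\|u\|_{2,\delta}=1$ in the limit (keeping $u$ nontrivial) and enables the support constraint to pass through the limit, which is the genuinely delicate step since weak convergence alone would not suffice.
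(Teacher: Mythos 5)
Your proof is correct and follows essentially the same route as the paper: monotonicity gives one inequality, and the reverse follows by normalizing the minimizers from Proposition \ref{prop:eigenfunctions}, extracting a weak $W^{1,2}_{\delta^*}$ / strong $L^2_\delta$ limit with unit norm supported in $V$, and applying Corollary \ref{cor:uppersc}. The only cosmetic difference is that you frame it as a contradiction, whereas your final chain of inequalities already proves $\lambda_\delta(V)\le L$ directly.
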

\begin{proof} From the elementary monotonicity of $\lambda_\delta$,
$\Lambda = \lim_{k\ra\infty} \lambda_\delta(V_k)$ exists and
\begin{equation}
\lambda_\delta(V_k) \le \Lambda \le \lambda_\delta(V)
\end{equation}
for each $k$. So it is enough to show that
\begin{equation}\label{eq:CtsFromAboveUpper}
\Lambda \ge \lambda_\delta(V).
\end{equation}
We may assume that $\Lambda$ is finite, for inequality \eqref{eq:CtsFromAboveUpper}
is trivial otherwise.  As a consequence, each $V_k$ is nonempty and
Proposition \ref{prop:eigenfunctions} provides minimizers
$u_k$ of $Q_\delta$ over $A(V_k)$ satisfying
$\|u_k\|_{2,\delta}=1$.
For each $k$, since $\|u_k\|_{2,\delta}=1$, 
\begin{equation}\label{eq:CtsFromAboveUpperBound}
\int a|\nabla u_k|^2 + Ru^2_k \le \Lambda.
\end{equation}
From inequality \eqref{eq:CtsFromAboveUpperBound}
and the boundedness of the sequence in $L^{2}_{\delta}(M)$,
Lemma \ref{lem:Qplusdelta} implies the sequence is bounded in $W^{1,2}_{\delta^*}(M)$.
A subsequence converges weakly in $W^{1,2}_{\delta^*}(M)$ and strongly in $L^{2}_{\delta}(M)$
to a limit $v$ with $\|v\|_{2,\delta}=1$. From weak upper semicontinuity 
(Corollary \ref{cor:uppersc}) we conclude $Q_\delta(v)\le \Lambda$ as well.
Moreover, $v\in A(V)$ since $v=0$ on $V_k^c$.  So $\lambda_\delta(v) \le \Lambda$.
\end{proof}
Note that Lemma \ref{lem:continuityfromabove} is false for the Yamabe invariant.
For example, one can take a sequence of balls in $\Reals^n$ that shrink down to
the empty set.  It is easy to see that the Yamabe invariant is scale invariant
and hence is a finite constant along the sequence.  Yet the Yamabe invariant of
the empty set is infinite.  In contrast, if $V_n\searrow \emptyset$,
Lemma \ref{lem:continuityfromabove} implies 
$\lambda_\delta(V_n)\ra\infty$, and in particular
at some point along the sequence $\lambda_\delta(V_n)>0$.  The following
result, which is an extension of \cite{Rauzy95} Lemma 2 to the AE setting, shows 
that in fact $\lambda_\delta(V)$ is positive so long as 
a certain weighted volume is sufficiently small.

\begin{lem}[Small sets are Yamabe positive]
\label{lem:UniformOuterApproximation}
For any $\mu > n$, there exists $C>0$ such that if
$\Vol_\mu(V) := \int_{V} \rho^{-\mu} <C$, $V$ is Yamabe positive.
\end{lem}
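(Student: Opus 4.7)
The plan is to reduce the claim to showing that $\lambda_\delta(V) > 0$ for some $\delta > \delta^*$, via Proposition \ref{cor:MeasTFAE}, and then to prove a uniform lower bound on the Rayleigh quotient $Q_\delta$ over $A(V)$ whenever $\Vol_\mu(V)$ is small.

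The key idea is to exploit H\"older's inequality on $V$ to trade an $L^2_\delta$ mass against an $L^N$ mass at the cost of a power of $\Vol_\mu(V)$. Concretely, for $u\in A(V)$ one has $\|u\|_{2,\delta}^2 = \int_V |u|^2 \rho^{-2\delta-n}$, and applying H\"older with exponents $N/2 = n/(n-2)$ and $n/2$ gives
\begin{equation}
\|u\|_{2,\delta}^2 \le \|u\|_N^2 \left(\int_V \rho^{-n\delta - n^2/2}\right)^{2/n}.
\end{equation}
I would match the weight to $\mu$ by setting $\delta = \mu/n - n/2$, so that the exponent on $\rho$ becomes $-\mu$. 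A quick check shows $\delta > \delta^* = 1 - n/2$ exactly because $\mu > n$, so this is an admissible weight for the framework of Section 3. Thus for $u \in A(V)$,
\begin{equation}
\|u\|_{2,\delta}^2 \le \Vol_\mu(V)^{2/n}\|u\|_N^2.
\end{equation}

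Now normalize $\|u\|_{2,\delta} = 1$, so the above gives $\|u\|_N \ge \Vol_\mu(V)^{-1/n}$. The continuous Sobolev embedding $W^{1,2}_{\delta^*}\hookrightarrow L^N$ yields $\|u\|_N \le C\|u\|_{W^{1,2}_{\delta^*}}$, and Lemma \ref{lem:Qplusdelta} applied with our chosen $\delta$ gives $\|u\|_{W^{1,2}_{\delta^*}}^2 \le C_1 Q_\delta(u) + C_2\|u\|_{2,\delta}^2 = C_1 Q_\delta(u) + C_2$. Chaining these estimates yields
\begin{equation}
\Vol_\mu(V)^{-2/n} \le C^2\bigl(C_1 Q_\delta(u) + C_2\bigr),
\end{equation}
which rearranges to $Q_\delta(u) \ge c_1 \Vol_\mu(V)^{-2/n} - c_2$ for constants $c_1, c_2 > 0$ depending only on $g$, $\mu$, and the background metric. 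Taking the infimum over $A(V)$ and choosing $C$ so that $c_1 C^{-2/n} > c_2$ forces $\lambda_\delta(V) > 0$, and hence $y(V) > 0$ by Proposition \ref{cor:MeasTFAE}.

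The main subtlety I anticipate is verifying that the chosen $\delta$ lies in the admissible range $(\delta^*, \infty)$; the condition $\mu > n$ is precisely what is needed for this. A minor point is that the argument applies uniformly in the compact and non-compact settings: Lemma \ref{lem:Qplusdelta} already packages the Poincar\'e inequality in the non-compact case, and in the compact case the factor $\|u\|_{2,\delta}^2$ in that lemma is harmless since it equals our normalizing constant $1$. No delicate compactness or minimization is required — this is a direct interpolation estimate, with H\"older furnishing the smallness mechanism via $\Vol_\mu(V)^{-2/n} \to \infty$.
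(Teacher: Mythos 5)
Your proof is correct and follows essentially the same route as the paper's: the same choice $\delta=\mu/n-n/2>\delta^*$, the same H\"older step trading $\|u\|_{2,\delta}^2$ for $\|u\|_N^2\,\Vol_\mu(V)^{2/n}$, and the same conclusion via the Sobolev inequality and the coercivity estimate that $Q_\delta(u)\ge c_1\Vol_\mu(V)^{-2/n}-c_2$ uniformly over $A(V)$, hence $\lambda_\delta(V)>0$ and $y(V)>0$ by Proposition \ref{cor:MeasTFAE}. The only difference is organizational: you route the lower bound through Lemma \ref{lem:Qplusdelta} under the normalization $\|u\|_{2,\delta}=1$, whereas the paper invokes inequality \eqref{eq:sobolev-cpct} and Lemma \ref{lem:RIntegralBound} directly and divides by $\|u\|_{2,\delta}^2$ at the end; these are the same estimate.
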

\begin{proof}
Suppose that $u\in A(V)$. Define $\delta$ by $(-2\delta -n) \frac{n}2 = -\mu$. 
Note that $\mu>n$ implies that $\delta >\delta^*$. Then, by H\"older's inequality,
\begin{equation}\label{eq:uniform1}
\|u\|_{2,\delta}^2 = \int u^2 \rho^{-2\delta -n} \leq \left(\int u^N \right)^{2/N}
\left( \int_{V} \rho^{(-2\delta-n)\frac{n}{2}}\right)^{2/n} 
= \|u\|_N^2 \Vol_\mu(V)^{2/n}.
\end{equation}
By the Sobolev inequality \eqref{eq:sobolev-cpct}, there exists $C_1$ such that
\begin{equation}\label{eq:uniform2}
\|u\|_N^2 \leq C_1 \left[\int a |\nabla u|^2 + \|u\|_{2,\delta}^2\right].
\end{equation} We also note that Lemma \ref{lem:RIntegralBound}
implies there exists $C_2$ such that
\begin{equation}\label{eq:uniform3}
-C_2 \|u\|_{2,\delta}^2 \leq \frac12 \int a |\nabla u|^2 + \int R u^2.
\end{equation}

Let $\eta$ be defined by $\eta \Vol_\mu(V)^{2/n}C_1 = \frac12$.
Using inequalities \eqref{eq:uniform1}-\eqref{eq:uniform3}, we calculate
\begin{equation}\label{eq:UniformOuter1}
\begin{aligned}
(\eta - C_2) \|u\|_{2,\delta}^2 
&\leq \eta\|u\|_N^2 \Vol_\mu(V)^{2/n} + \int R u^2
 + \frac{1}{2} \int a|\nabla u|^2\\
&\leq \eta\Vol_\mu(V)^{2/n}C_1 \left[\int a |\nabla u|^2 +\|u\|_{2,\delta}^2\right]+ \int R u^2
 + \frac12 \int a|\nabla u|^2\\
&= \int \left(a|\nabla u|^2 + Ru^2\right) + \frac{1}{2}\|u\|_{2,\delta}^2.
\end{aligned}
\end{equation} Dividing through by $\|u\|_{2,\delta}^2$, inequality
\eqref{eq:UniformOuter1} reduces to
\begin{equation}
\eta- C_2 -\frac{1}{2}\leq Q_\delta(u).
\end{equation} As $\Vol_\mu(V) \to 0$, $\eta \to \infty$. Thus there is a $C>0$ such
that if $\Vol_\mu(V) <C$, then $Q_\delta(u)$ has a uniform positive lower bound
for all $u \in A(V)$. Thus $\lambda_\delta(V) >0$, and so $V$ is Yamabe positive
by Corollary \ref{cor:SignEquivalence}.
\end{proof}

In Section \ref{sec:YamabeClassification} below we discuss the relationship between
the Yamabe invariant of an AE manifold and its compactification. 
After compactification, for $\mu=2n$, the condition 
$\Vol_{\mu}(V)<C$ corresponds to the condition that the 
usual volume of the compactified set is sufficiently small. This is exactly
Rauzy's condition, and the other choices of $\mu$ provide a mild generalization
of his result.

\begin{lem}[Strict monotonicity at connected, open sets] \label{InnerApproxNull}
Let $\delta>\delta^*$ and let $\Omega$ be a connected open set.
For any measurable set $E$ in $\Omega$  with positive measure, 
\begin{equation}\label{eq:strictmonotone}
\lambda_\delta(\Omega\setminus E) > \lambda_\delta(\Omega).
\end{equation}
\end{lem}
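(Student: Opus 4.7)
The plan is to argue by contradiction, using Proposition \ref{prop:eigenfunctions} to produce a minimizer that would have to simultaneously minimize $Q_\delta$ over $A(\Omega)$ and vanish on a set of positive measure inside $\Omega$; the strong maximum principle will then rule this out. Lemma \ref{lem:monotone} already gives $\lambda_\delta(\Omega\setminus E)\ge\lambda_\delta(\Omega)$, so I suppose the two are equal.

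By Proposition \ref{prop:eigenfunctions} applied to $V=\Omega\setminus E$ (which has positive measure), there is a non-negative $u\in A(\Omega\setminus E)$ with $Q_\delta(u)=\lambda_\delta(\Omega\setminus E)$. Since $A(\Omega\setminus E)\subseteq A(\Omega)$ and the two infima coincide by assumption, $u$ is also a minimizer of $Q_\delta$ over $A(\Omega)$. The variational argument in the proof of Proposition \ref{prop:eigenfunctions}, applied now with $V=\Omega$, shows that this $u$ satisfies the eigenvalue equation
\begin{equation}
-a\Delta u + Ru = \lambda_\delta(\Omega)\,\rho^{2(\delta^*-\delta)}u
\end{equation}
on all of $\Omega$.

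Rewriting this as $-a\Delta u + cu = 0$ with $c := R - \lambda_\delta(\Omega)\rho^{2(\delta^*-\delta)}\in L^p_\loc(\Omega)$, standard elliptic regularity (using $p>n/2$) upgrades $u$ to $W^{2,p}_\loc(\Omega)$ and, by Sobolev embedding, to a continuous function on $\Omega$. Because $u$ vanishes in the $L^2$ sense on $E$ and $E$ has positive measure, the continuous representative actually vanishes at some $x_0\in E\subseteq\Omega$. So $u$ is a non-negative continuous solution of a linear Schr\"odinger-type equation with an $L^p_\loc$ potential, $p>n/2$, and the strong maximum principle in this regularity class applies: vanishing at the interior point $x_0$ of the connected open set $\Omega$ forces $u\equiv 0$ on $\Omega$, contradicting $u\in A(\Omega\setminus E)$.

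The main obstacle is invoking the strong maximum principle (equivalently, the Harnack inequality) for a non-negative weak solution in the presence of an $L^p_\loc$ potential with $p$ only slightly above $n/2$; this is classical (via Moser/Trudinger iteration) but deserves to be cited precisely. Once it is in hand, the rest of the argument is short, amounting to the upgrade from minimizer over $A(\Omega\setminus E)$ to minimizer over $A(\Omega)$ and a routine application of elliptic regularity to bring the solution into a class where the maximum principle bites.
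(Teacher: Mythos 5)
Your argument is essentially the paper's own proof: assume equality, take the minimizer over $A(\Omega\setminus E)$ from Proposition \ref{prop:eigenfunctions}, observe it also minimizes over $A(\Omega)$ and hence weakly solves the eigenvalue equation there, then use local regularity and the weak Harnack inequality (the paper cites \cite{trudinger-measurable}) to conclude the continuous non-negative solution vanishing at a point of the connected set $\Omega$ must vanish identically, a contradiction. The only small omission is your parenthetical claim that $\Omega\setminus E$ has positive measure, which need not hold; the paper dispenses with that case first, since then $\lambda_\delta(\Omega\setminus E)=\infty$ by convention and the strict inequality is trivial.
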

\begin{proof}
Let $V=\Omega\setminus E$. We may assume $V$ has positive measure, for 
inequality \eqref{eq:strictmonotone} is trivial otherwise. 

Suppose to the contrary that $\lambda_\delta(V) = \lambda_\delta(\Omega)$.
Since $V$ has positive measure, Proposition \ref{prop:eigenfunctions} provides
a function $u\in A(V)$ with $Q_\delta(u) = \lambda_\delta(V)$.
Hence $u$ also is a minimizer of $Q_\delta$ over $A(\Omega)$, and 
Proposition \ref{prop:eigenfunctions} implies that $u$ weakly solves 
\begin{equation}
-a\Delta u + \left[R-\lambda_\delta \rho^{2(\delta^*-\delta)}\right]u =0
\end{equation}
on $\Omega$. Local regularity implies that $u\in W^{2,p}_{\mathrm{loc}}(\Omega)$,
and we may assume after adjusting $u$ on a set of zero measure 
that $u$ is continuous. Since $E$ has positive measure, we can still
conclude that $u$ vanishes at some point in $\Omega$.
Following the argument of Lemma 4 from \cite{Maxwell05b},
we may apply the weak Harnack inequality of \cite{trudinger-measurable}
to conclude that $u$ vanishes everywhere on the connected 
set $\Omega$, and hence on all of $M$.  Since $u\in A(\Omega)$, 
this is a contradiction.
\end{proof}
The connectivity hypothesis in Lemma \ref{InnerApproxNull} is necessary to
obtain strict monotonicity. For example, two disjoint unit balls in $R^n$ 
have the same first eigenvalue as a single unit ball.  On the other hand, 
the assumption that $\Omega$ is open is not optimal, and relaxing this condition
would require a suitable replacement for the weak Harnack inequality.

Although we have not established continuity from below for $\lambda_\delta$,
it holds in certain cases.  The following is a prototypical result
that suffices for our purposes.
\begin{lem}[Continuity from below; prototype] \label{InnerApproxNeg}
Suppose $V$ is measurable.  Let $x_0\in M$ and let $B_r(x_0)$
be the ball of radius $r$ about $x_0$.  Then for any $\delta>\delta^*$
\begin{equation}\label{eq:CtsFromBelow}
\lim_{r\ra 0} \lambda_\delta( V\setminus B_r) = \lambda_\delta( V ).
\end{equation}
\end{lem}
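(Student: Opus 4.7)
Plan. The case where $V$ has measure zero is immediate from the convention $\lambda_\delta(\emptyset)=\infty$, so I assume $V$ has positive measure. Monotonicity (Lemma \ref{lem:monotone}) gives that $r\mapsto \lambda_\delta(V\setminus B_r)$ is non-increasing as $r\downarrow 0$ and bounded below by $\lambda_\delta(V)$, so the limit $\Lambda$ exists in $[\lambda_\delta(V),\infty]$. I will prove $\Lambda\le \lambda_\delta(V)$ by constructing, from any $u\in A(V)$, cutoff test functions $u_r\in A(V\setminus B_r)$ with $Q_\delta(u_r)\to Q_\delta(u)$.

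Fix $u\in A(V)$ and, for $r$ small enough that $B_{2r}$ lies in a coordinate chart around $x_0$, let $\eta_r$ be a smooth cutoff with $\eta_r\equiv 0$ on $B_r$, $\eta_r\equiv 1$ off $B_{2r}$, $0\le \eta_r\le 1$, and $|\nabla \eta_r|\le C/r$. Set $u_r:=\eta_r u$, which is supported in $V\setminus B_r$ and lies in $W^{1,2}_{\delta^*}$. I claim $u_r\to u$ in $W^{1,2}_{\delta^*}$; in particular $u_r\not\equiv 0$ for all sufficiently small $r$, so $u_r\in A(V\setminus B_r)$.

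The convergences $\eta_r u\to u$ in $L^2_{\delta^*}$ and $\eta_r\nabla u\to \nabla u$ in $L^2$ follow from dominated convergence applied to integrands supported in $B_{2r}$. The only delicate term is $u\nabla \eta_r$ appearing in $\nabla u_r$, since $|\nabla \eta_r|$ blows up like $1/r$. I control it by H\"older's inequality combined with the Sobolev embedding $W^{1,2}_{\delta^*}\hookrightarrow L^N$ (Lemma \ref{lem:poincare} in the AE case, standard in the compact case):
\begin{equation*}
\int_M |u\nabla \eta_r|^2 \,\le\, \frac{C}{r^2}\int_{B_{2r}} u^2
\,\le\, \frac{C}{r^2}\,|B_{2r}|^{1-2/N}\left(\int_{B_{2r}} u^N\right)^{2/N}
\,\le\, C'\left(\int_{B_{2r}} u^N\right)^{2/N},
\end{equation*}
where I used $1-2/N = 2/n$ together with $|B_{2r}|\le Cr^n$ so that the powers of $r$ cancel. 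The last factor tends to $0$ by dominated convergence applied to $u^N\chi_{B_{2r}}$; this step, balancing the blow-up of $|\nabla\eta_r|^2$ against the shrinking measure through the critical Sobolev exponent, is the heart of the argument.

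Since $Q_\delta$ is continuous on $W^{1,2}_{\delta^*}\setminus\{0\}$, the convergence $u_r\to u$ yields $Q_\delta(u_r)\to Q_\delta(u)$. Given any $L>\lambda_\delta(V)$, I choose $u\in A(V)$ with $Q_\delta(u)<L$; then for all sufficiently small $r$ we have $u_r\in A(V\setminus B_r)$ and $Q_\delta(u_r)<L$, hence $\lambda_\delta(V\setminus B_r)<L$. Thus $\Lambda\le L$, and letting $L\searrow \lambda_\delta(V)$ gives the desired inequality.
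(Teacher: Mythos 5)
Your proof is correct and follows essentially the same route as the paper: multiply by a cutoff vanishing on $B_r$ and control the problematic term $u\nabla\eta_r$ via H\"older and the critical Sobolev exponent, so that the $r^{-2}$ blow-up cancels against $|B_{2r}|^{2/n}$ and the remainder tends to zero because $u\in L^N_{\mathrm{loc}}$. The only (harmless) difference is that the paper applies the cutoff to an exact minimizer of $Q_\delta$ over $A(V)$, whereas you use near-minimizers and let $L\searrow\lambda_\delta(V)$, which avoids invoking the existence of a minimizer.
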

\begin{proof}
Let $u$ be a function in $A(V)$ that minimizes $Q_\delta$.  
Let $\chi_r$ be a radial bump function that equals 0 on $B_{r}(x_0)$, 
equals 1 outside 
$B_{2r}(x_0)$, and has gradient bounded by $2/r$.  Defining $u_r = \chi_r u$ we 
claim that $u_r\ra u$ in $W^{1,2}_{\delta^*}(M)$.  Assuming this for the moment,
we conclude from the continuity of $Q_\delta$ that 
\begin{equation}
\lambda_\delta(V) \le \lambda_\delta(V\setminus B_r) \le Q_\delta( u_r )
\ra Q_\delta(u) = \lambda_\delta(V)
\end{equation}
and hence we obtain equality \eqref{eq:CtsFromBelow}.

To show $u_r\ra u$ in $W^{1,2}_{\delta^*}$, since
$u_r\ra u$ in $L^2_{\delta^*}$, it is enough to show that
$\int |\nabla(u-u_r)|^2\rightarrow 0$.  However,
\begin{equation}\label{eq:ur-to-u}
\int |\nabla(u-u_r)|^2 \le  2 \int (1-\chi_r)^2 |\nabla u|^2 + u^2 | \nabla(1-\chi_r)|^2.
\end{equation}
The first term on the right-hand side of inequality \eqref{eq:ur-to-u} evidently
converges to zero. For the second, we note from H\"older's inequality that
\begin{equation}
\int_{B_{2r}} u^2 \le \left[\int_{B_{2r}} u^N\right]^{\frac{2}{N}}  
\left[\int_{B_{2r}} 1 \right]^{\frac{2}{n}} \le Cr^2 \left[\int_{B_{2r}} u^N\right]^{\frac{2}{N}}.
\end{equation}
Since $u\in L^N_{\mathrm{loc}}$, $\int_{B_{2r}} u^N\to 0$ as $r\ra 0$.  Since 
$\nabla(1-\chi_r)$ is bounded by $c/r$, we conclude that the second term of
the right-hand side of inequality \eqref{eq:ur-to-u} also converges to zero.

\end{proof}

\section{Prescribed Non-Positive Scalar Curvature} \label{sec:PrescribedProblem}

In this section we prove the following necessary and sufficient condition for
being able to conformally transform to non-positive scalar curvature for
AE manifolds with at least one end.

\begin{thm}\label{thm:PrescribedScalarCurvature}
Let $(M^n,g)$ be a $W^{2,p}_\tau$ AE manifold with $p>n/2$ and $\tau \in (2-n,0)$.
Suppose $R'\in L^p_{\tau-2}$ is non-positive.  Then the following are equivalent:
\begin{enumerate}
\item There exists
a positive function $\phi$ with
$\phi-1\in W^{2,p}_\tau$
 and such that the scalar curvature
of $g'=\phi^{N-2}g$ is $R'$.
\item $\{R'=0\}$ is Yamabe positive.
\end{enumerate}
\end{thm}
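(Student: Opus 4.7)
This is the straightforward direction. Suppose $g' = \phi^{N-2}g$ has scalar curvature $R'$ with $\phi - 1 \in W^{2,p}_\tau$. The algebra properties of $W^{2,p}_\tau$ ensure $g'$ is itself a $W^{2,p}_\tau$ AE metric, so by Lemma~\ref{ConformalInvariance} we have $y_g(\{R'=0\}) = y_{g'}(\{R'=0\})$. It therefore suffices to exhibit positivity for $g'$. For any $u \in A(\{R'=0\})$ the scalar-curvature term in $Q^y_{g'}(u)$ vanishes because $u$ is supported where $R_{g'} = R' = 0$, leaving
\begin{equation*}
Q^y_{g'}(u) = \frac{a\,\|\nabla u\|_{g',2}^2}{\|u\|_{g',N}^2} \ge a\,c_2^2 > 0
\end{equation*}
by the AE Sobolev inequality \eqref{eq:sobolev} applied to $g'$. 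Hence $y_g(\{R'=0\}) \ge a\,c_2^2 > 0$.

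\textbf{Direction $(2) \Rightarrow (1)$: strategy.} I would produce $\phi$ variationally, as a minimizer of the natural Lagrangian associated with \eqref{eq:prescribed-sc}, essentially
\begin{equation*}
J(\phi) = \int_M \tfrac{a}{2}|\nabla \phi|^2 + \tfrac{R}{2}\phi^2 - \tfrac{R'}{N}\phi^N,
\end{equation*}
with constant and linear pieces in $\phi - 1$ subtracted as needed to make $J$ well-defined and finite on the class $\mathcal{C}_+$ of non-negative $\phi$ with $\phi - 1$ in a suitable weighted Sobolev space (modelled on $W^{1,2}_{\delta^*}$). A critical point of $J$ in $\mathcal{C}_+$ satisfies \eqref{eq:prescribed-sc}. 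The steps would be: (a) prove $J$ is bounded below and that minimizing sequences are bounded in the relevant weighted norm; (b) extract a weak limit $\phi$ which achieves the infimum using weak upper semicontinuity of the quadratic part (Corollary~\ref{cor:uppersc}) and Fatou's lemma on the non-negative nonlinear integrand $-R'\phi^N$; (c) take variations to obtain the Euler--Lagrange equation; (d) bootstrap, using the isomorphism properties of the Laplacian on weighted spaces in the range $\tau \in (2-n,0)$, to upgrade regularity to $\phi - 1 \in W^{2,p}_\tau$; (e) invoke the weak Harnack inequality to conclude $\phi > 0$ everywhere, since the asymptotic condition $\phi \to 1$ precludes $\phi \equiv 0$.

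\textbf{Main obstacle.} The crux is step (a), and this is where the Yamabe positivity of $\{R'=0\}$ enters in an essential way. The sign condition $R' \le 0$ makes $-\tfrac{R'}{N}\phi^N$ pointwise non-negative and gives coercivity on $\{R' < 0\}$. The dangerous failure mode is a minimizing sequence whose $u := \phi - 1$ concentrates its oscillatory energy on $\{R' = 0\}$, where the only available control is the quadratic form $\int a|\nabla u|^2 + R u^2$. The hypothesis $y(\{R'=0\}) > 0$, rephrased via Proposition~\ref{cor:MeasTFAE} as $\lambda_\delta(\{R'=0\}) > 0$ for any chosen $\delta > \delta^*$, is exactly the coercivity needed to rule out such concentration. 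A partition-of-unity argument separating $\{R' \le -\eta\}$ from a neighborhood of $\{R' = 0\}$, combined with the lower-order estimate of Lemma~\ref{lem:RIntegralBound} and a careful passage $\eta \to 0$, should yield the lower bound on $J$ and the needed a priori bound on minimizing sequences. A secondary technical hurdle is the careful renormalization of $J$ so that it is finite on the affine class $\mathcal{C}_+$, given that the natural pieces $\int R\phi^2$ and $\int R'\phi^N$ need not converge individually when $\phi$ does not decay to zero at infinity.
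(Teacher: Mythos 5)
Your direction $(1)\Rightarrow(2)$ is correct. It is in fact slightly more direct than the paper's argument, which extracts a minimizer of $Q_{g',\delta}$ over $A(\{R'=0\})$ via Proposition \ref{prop:eigenfunctions} and passes through the sign equivalence of Proposition \ref{cor:MeasTFAE}; your route simply bounds $Q^y_{g'}$ below by the Sobolev constant, and the use of \eqref{eq:sobolev} is legitimate since in this section AE manifolds have at least one end. Both arguments hinge on the same observation that the scalar-curvature term vanishes on $A(\{R'=0\})$.

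For $(2)\Rightarrow(1)$ you have located correctly where Yamabe positivity enters (coercivity), but your step (d) contains precisely the gap that this theorem's proof exists to repair. A minimizer of $J$ produced in step (b) satisfies only $\phi-1\in W^{1,2}_{\delta^*}\hookrightarrow L^N$, so the nonlinearity $R'\phi^{N-1}$ in the Euler--Lagrange equation lies only in $L^{N/(N-1)}$ locally; elliptic regularity then yields $\phi-1\in W^{2,N/(N-1)}_\sigma$, which embeds back into $L^N$ --- no gain in integrability, and the bootstrap never starts. This is exactly the error in Rauzy's proof (and the gap in Yamabe's original argument) that the paper corrects. The indispensable missing ingredient is a Moser/Trudinger-type iteration (Lemma \ref{UniformSubcriticalBound}, adapted from \cite{LP87} Proposition 4.4), which exploits the sign $R'\le 0$ to discard the critical term and produce bounds in $L^M$ for some $M>N$; only then does the bootstrap of Corollary \ref{cor:UniformBound} close. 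The paper runs this estimate uniformly over subcritical minimizers $u_q$, $q<N$, and lets $q\to N$, but some version of it is required even if one minimizes the critical functional directly as you propose.

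A second, lesser concern: your coercivity sketch via a spatial partition of unity separating $\{R'\le-\eta\}$ from ``a neighborhood of $\{R'=0\}$'' is not available at this level of generality. Since $R'$ is merely an $L^p_{\tau-2}$ function, $\{R'=0\}$ is only a measurable set --- it may have empty interior and its complement may be dense --- so there need not exist cutoff functions adapted to it. The paper instead splits the \emph{test functions}, not the manifold: $u$ is declared concentrated on the zero set when $\int|R'|u^2\le\eta\,\|u\|_{2,\delta}^2\int|R'|$, and for such $u$ the key bound \eqref{eq:pseudo-yamabe-positive} is extracted from $\lambda_\delta(\{R'=0\})>0$ by a compactness/contradiction argument; non-concentrated $u$ are handled using the definite $|R'|$-mass where $R'<0$. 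Finally, your renormalization worry is genuine ($R\in L^p_{\tau-2}$ with $\tau>2-n$ need not be integrable), but the paper disposes of it by the preliminary reductions of Lemmas \ref{lem:LowerScalarCurvature}, \ref{lem:ZeroNearInfinity} and \ref{lem:continuityfromabove}, which make $R$ and $R'$ compactly supported and $R'$ bounded before the variational argument begins.
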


For compact Yamabe negative manifolds we have the following analogous
result. Since Rauzy's condition
\eqref{eq:rauzycond} is equivalent to the set $\{R'=0\}$ being Yamabe
positive, this is a generalization to lower regularity
and a correction of the proof of part of \cite{Rauzy95} Theorem 1.

\begin{thm}\label{thm:PrescribedCompact}
Let $(M^n,g)$ be a $W^{2,p}$ compact Yamabe negative manifold with $p>n/2$.
Suppose $R'\in L^p$ is non-positive.  Then the following are equivalent:
\begin{enumerate}
\item There exists
a positive function $\phi$ with
$\phi\in W^{2,p}$ and such that the scalar curvature
of $g'=\phi^{N-2}g$ is $R'$.
\item $\{R'=0\}$ is Yamabe positive.
\end{enumerate}
\end{thm}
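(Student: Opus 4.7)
My plan is to handle the two implications separately. The implication (1) $\Rightarrow$ (2) is essentially elementary and proceeds via conformal invariance of the Yamabe invariant of a set. The implication (2) $\Rightarrow$ (1) requires a variational construction whose main technical difficulty lies in producing a strictly positive minimizer.

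For (1) $\Rightarrow$ (2), suppose $g' = \phi^{N-2}g$ has scalar curvature $R'$. By Lemma~\ref{ConformalInvariance}, $y_g(\{R'=0\}) = y_{g'}(\{R'=0\})$, so it suffices to prove the latter is positive. Since $g$ is Yamabe negative, $R' \not\equiv 0$ (else $g'$ would be scalar flat and $g$ would be Yamabe nonnegative), so $\{R'=0\}$ is a proper subset of $M$. For any $u \in A(\{R'=0\})$, the $\int R' u^2$ term in $Q^y_{g'}(u)$ vanishes identically, so $Q^y_{g'}(u) = a\|\nabla u\|_{g',2}^2/\|u\|_N^2$. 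Since $u$ vanishes on the positive-measure set $\{R'<0\}$, a standard Poincar\'e--Sobolev inequality on the compact manifold $M$ yields $\|u\|_N \leq C\|\nabla u\|_2$, whence $y_{g'}(\{R'=0\}) \geq a/C^2 > 0$.

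For (2) $\Rightarrow$ (1), I would first conformally normalize $g$ so that its scalar curvature $R$ is a negative constant, using the resolution of the Yamabe problem in the negative case. Then consider the constrained minimization
\begin{equation*}
\mu := \inf\left\{ I(\phi) : \phi \in W^{1,2}(M),\ \phi \geq 0,\ \textstyle\int (-R')\phi^N = 1 \right\},
\end{equation*}
where $I(\phi) = \int a|\nabla \phi|^2 + R\phi^2$. Since $g$ is Yamabe negative, a positive Yamabe minimizer $\psi$, rescaled to satisfy the constraint (possible because $R'\not\equiv 0$ and $\psi>0$), gives $I<0$, so $\mu<0$. If a minimizer $\phi_0$ with $\phi_0 > 0$ almost everywhere exists, the Lagrange multiplier relation yields
\begin{equation*}
-a\Delta \phi_0 + R\phi_0 = -\mu R'\phi_0^{N-1},
\end{equation*}
and setting $\phi = c\phi_0$ with $c = (-\mu)^{1/(N-2)}$ produces the required conformal factor; interior elliptic regularity upgrades $\phi$ to $W^{2,p}$.

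The main obstacle is producing a minimizer that is strictly positive. Two familiar pathologies, analogous to the gap in Yamabe's original argument and the gap in Rauzy's proof acknowledged above, must be ruled out: loss of compactness due to the critical Sobolev exponent appearing in the constraint, and vanishing of the minimizer on a set of positive measure. The plan is to approximate $R'$ from below by strictly negative, bounded functions $R'_k \to R'$ in $L^p$; for each $R'_k$, constant sub- and supersolutions are available and yield a strictly positive solution $\phi_k$ by a standard sub/supersolution argument. Passing to the limit, the hypothesis $y(\{R'=0\}) > 0$---equivalently, $\lambda_\delta(\{R'=0\}) > 0$ for some $\delta > \delta^*$ by Proposition~\ref{cor:MeasTFAE}---is what prevents the $\phi_k$ from degenerating: were the limit to vanish on a positive-measure subset of $\{R'=0\}$, one could use a cutoff of the limit as a test function to contradict the positivity of $\lambda_\delta(\{R'=0\})$.
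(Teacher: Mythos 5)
Your direction (1)~$\Rightarrow$~(2) is fine, and in fact slightly more self-contained than the paper's version: instead of invoking the weighted eigenvalue minimizer and Proposition~\ref{cor:MeasTFAE}, you observe that $R'\not\equiv 0$ (by Yamabe negativity), that the $\int R'u^2$ term vanishes for $u\in A(\{R'=0\})$, and that a Poincar\'e--Sobolev inequality for functions vanishing on the fixed positive-measure set $\{R'<0\}$ bounds $Q^y_{g'}$ below by a positive constant. That argument is correct.

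The direction (2)~$\Rightarrow$~(1) has a genuine gap, and it sits exactly at the point the paper flags as the error in Rauzy's proof. You correctly identify that a strictly positive minimizer is the issue, and your approximation scheme ($R'_k<0$ bounded, constant sub/supersolutions, positive solutions $\phi_k$) does produce approximate solutions. But you give no mechanism for \emph{uniform a priori bounds} on the $\phi_k$: the constant supersolutions are of size $(|R|/\inf|R'_k|)^{1/(N-2)}$, which blows up as $R'_k\to R'$ wherever $R'=0$, so the sub/supersolution construction yields no control whatsoever in the limit. Without a uniform bound in (at least) $W^{1,2}$ you cannot extract a limit, let alone pass to the limit in the critical nonlinearity $R'\phi^{N-1}$; and your constrained minimization $\int(-R')\phi^N=1$ is not weakly closed in $W^{1,2}$, so the direct method does not apply to it either. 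The hypothesis $y(\{R'=0\})>0$ must be used precisely here, not merely to prevent the limit from vanishing: this is the content of the uniform coercivity estimate (Proposition~\ref{prop:Coercivity}), which splits test functions according to whether they concentrate on $Z=\{R'=0\}$ (where $\lambda_\delta(Z)>0$ gives control) or not (where the $\int|R'||u|^q$ term dominates). Even granting uniform $W^{1,2}$ bounds, convergence of the critical term requires the improved integrability $L^M$, $M>N$, of Lemma~\ref{UniformSubcriticalBound} (the Lee--Parker/Trudinger iteration); merely knowing the approximants are bounded in $L^N$ is not enough to start a bootstrap or to identify the limit equation. Finally, your argument that the limit is nontrivial (``a cutoff of the limit as a test function'') is not coherent as stated: by the weak Harnack inequality a nonnegative limit solution either is positive everywhere or vanishes identically, so the real task is to exclude $\phi\equiv 0$, which the paper does by an explicit negative upper bound on the energy of the minimizers (Lemma~\ref{lem:UniformLowerBoundForCompact}); nothing in your proposal plays that role.
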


For the most part, the proof of Theorem \ref{thm:PrescribedCompact} can be obtained
from the proof of Theorem \ref{thm:PrescribedScalarCurvature} 
by treating a compact manifold as an asymptotically Euclidean manifold
with zero ends.  So we
focus on Theorem \ref{thm:PrescribedScalarCurvature} and then 
present the few additional arguments needed for Theorem \ref{thm:PrescribedCompact}
at the end of the section.  

Turning to Theorem \ref{thm:PrescribedScalarCurvature}, 
the  proof that 1) implies 2) is short, so we
delay it and concentrate on the direction 2) implies 1).
Suppose that $\{R'=0\}$ is Yamabe positive.
We will show that we can make the desired conformal change
via a sequence of results proved over the remainder of this section.
It suffices work under the following simplifying hypotheses.
\begin{enumerate}
\item We may assume that the prescribed scalar curvature $R'$ is bounded
since Lemma \ref{lem:LowerScalarCurvature}, which we prove next, shows that we can lower
scalar curvature after first solving the problem for a scalar curvature that is
truncated below.
\item We may assume $\{R'=0\}$ contains a neighborhood of infinity,
since continuity from above (Lemma \ref{lem:continuityfromabove}) 
shows we can truncate $R'$ in a ``small'' neighborhood
of infinity such that its zero set remains Yamabe positive, and
we can subsequently lower scalar curvature after solving the modified problem.
\item We may assume that the initial scalar curvature satisfies $R=0$ in
a neighborhood of infinity, since Lemma \ref{lem:ZeroNearInfinity}, which we prove
below, shows we can initially conformally transform to such a scalar curvature,
and since the hypotheses of Theorem \ref{thm:PrescribedScalarCurvature} are conformally
invariant.
\end{enumerate}

\begin{lem} \label{lem:LowerScalarCurvature}
Suppose $(M, g)$ is a $W^{2,p}_\tau$ AE manifold with $p>n/2$ and $\tau \in (2-n,0)$.
Suppose $R'\in L^{p}_{\tau-2}$. If $R_g \geq R'$, then there exists a positive $\phi$
with $\phi-1 \in W^{2,p}_\tau$ such that $g' = \phi^{N-2} g$ has scalar curvature $R'$.
\end{lem}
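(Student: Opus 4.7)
The plan is to use the method of sub- and supersolutions for the semilinear equation
\[
-a\Delta \phi + R\phi = R' \phi^{N-1}.
\]
The constant $\phi_+ \equiv 1$ is an immediate supersolution, since by the hypothesis $R \geq R'$ we have $-a\Delta(1) + R\cdot 1 = R \geq R' = R'\cdot 1^{N-1}$. The main work is therefore to produce a positive subsolution $\phi_- \leq \phi_+ = 1$ and run a monotone iteration sandwiched between them.

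To construct the subsolution, I would exploit the fact that $-a\Delta \colon W^{2,p}_\tau \to L^p_{\tau-2}$ is an isomorphism for $\tau \in (2-n, 0)$, as cited from \cite{Bartnik86}. Writing $\phi = 1 + u$ with $u \in W^{2,p}_\tau$ reduces the equation to $-a\Delta u = R'(1+u)^{N-1} - R(1+u)$, whose right-hand side lies in $L^p_{\tau-2}$ whenever $1+u$ is $L^\infty$-bounded. Starting from $u_0 = 0$ and iterating $u_{k+1} = (-a\Delta)^{-1}[R'(1+u_k)^{N-1} - R(1+u_k)]$ gives, at the first step, $-a\Delta u_1 = R' - R \leq 0$, so $u_1 \leq 0$ by the maximum principle for subharmonic functions vanishing at infinity; hence $\phi_1 \leq 1$. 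To secure monotonicity of the sequence I would incorporate a shift term $L\rho^{-2}$ with $L$ large enough that $-a\Delta + L\rho^{-2}$ remains an isomorphism on the same weighted Sobolev spaces and that the effective reaction $\phi \mapsto L\rho^{-2}\phi + R'\phi^{N-1} - R\phi$ is monotone in $\phi$ on the range of the iterates. The monotone limit $\phi$ then solves the equation, and weighted elliptic bootstrap upgrades the regularity of $\phi - 1$ to $W^{2,p}_\tau$; strict positivity of $\phi$ follows from a Harnack / maximum-principle argument in the spirit of Lemma~\ref{InnerApproxNull}.

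The principal obstacle is ensuring a uniform positive lower bound for the iterates, equivalently the existence of a genuine positive subsolution $\phi_- > 0$. A constant $\phi_- \equiv \varepsilon$ works only where $R \leq 0$, and in general the linearization about $\phi = 1$ leaves a leading term of the wrong sign, so the subsolution must come from something beyond a naive perturbation. To handle this I would either (i) combine the iteration with a compact exhaustion $\Omega_j \nearrow M$, solving Dirichlet problems on $\Omega_j$ with boundary data $\phi = 1$ and using a small constant as a subsolution locally where it succeeds, then passing to the limit using $L^p_{\tau-2}$ decay of $R$ and $R'$ for a priori estimates; or (ii) use a continuity argument along $R_t = (1-t)R + tR'$, $t \in [0,1]$, with $t = 0$ trivially solved by $\phi \equiv 1$, openness from the implicit function theorem applied to the Fredholm linearization, and closedness from the same weighted a priori estimates. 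In either route, the positivity-preservation of $-a\Delta$ and the integrability $R,R' \in L^p_{\tau-2}$ control the behavior of the iterates at infinity and prevent degeneration.
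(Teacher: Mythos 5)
You correctly identify $\phi_+\equiv 1$ as a supersolution, but the rest of the proposal stalls on a difficulty that the paper's argument sidesteps entirely. The missing idea is that the barrier method does not require a \emph{positive} subsolution: $\phi_-\equiv 0$ is a subsolution of $-a\Delta\phi+R\phi=R'\phi^{N-1}$, and the sub/supersolution theorem invoked in the paper (\cite{Maxwell05b} Proposition 2) produces a solution $\phi$ with $0\le\phi\le 1$ and $\phi-1\in W^{2,p}_\tau$ from the ordered pair $(0,1)$ alone. Strict positivity is then recovered \emph{a posteriori}: since $0\le\phi\le 1$, the solution satisfies the linear equation $-a\Delta\phi+\bigl(R-R'\phi^{N-2}\bigr)\phi=0$ with coefficient in $L^p_{\loc}$, and since $\phi\to 1$ at infinity, $\phi\not\equiv 0$; the weak Harnack inequality of \cite{trudinger-measurable} then forces $\phi>0$ everywhere. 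Your instinct that ``the subsolution must come from something beyond a naive perturbation'' is what leads you astray --- no positive subsolution is needed at all.

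As written, your proposal has a genuine gap: the two fallback routes you offer both hinge on exactly the unproven point. In route (ii) the closedness step of the continuity method requires a uniform positive lower bound on $\phi_t$, which is the same unestablished positivity; in route (i) you concede that the small constant subsolution ``works only where $R\le 0$,'' so the exhaustion argument has no global barrier from below. The monotone iteration with the shift $L\rho^{-2}$ is likewise unverified (monotonicity of $\phi\mapsto L\rho^{-2}\phi+R'\phi^{N-1}-R\phi$ on the range of the iterates needs control you have not supplied, and in any case it reproduces the sub/supersolution machinery without the correct lower barrier). If you replace the search for a positive $\phi_-$ by the observation that $\phi_-\equiv 0$ suffices and append the Harnack argument for positivity, the proof closes in a few lines.
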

\begin{proof}
We seek a solution to $-a\Delta \phi + R\phi = R'\phi^{q-1}$. Note that
$0$ is a subsolution and, since $R\geq R'$, $1$ is a supersolution. By \cite{Maxwell05b}
Proposition 2,
there exists a solution $\phi$ with $0\leq \phi\leq 1$ and $\phi-1\in W^{2,p}_\tau$. 
Since $\phi\ge 0$ solves $-a\Delta\phi+ (R-R'\phi^{q-2})\phi= 0$, and since $\phi\to 1$
at infinity, the weak Harnack inequality \cite{trudinger-measurable} implies $\phi$
is positive.
\end{proof}

\begin{lem}\label{lem:ZeroNearInfinity}
Suppose $(M, g)$ is a $W^{2,p}_\tau$ AE manifold with $p>n/2$ and $\tau \in (2-n,0)$.
There exists $\phi>0$ with $\phi-1 \in W^{2,p}_\tau$
such that the metric $g' =\phi^{N-2}g$ has zero scalar curvature on some neighborhood
of infinity.
\end{lem}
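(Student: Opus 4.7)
The plan is to find $\phi = 1 + u$ with $u \in W^{2,p}_\tau$ small, where $u$ solves a truncated version of the zero scalar curvature equation. Let $\chi_r$ be a smooth cutoff equal to $1$ on $\{\rho \le r\}$ and vanishing outside $\{\rho \le 2r\}$, and consider the linear equation
\begin{equation}
L_r u \;:=\; -a\Delta u + (1-\chi_r)R\,u \;=\; -(1-\chi_r)R
\end{equation}
in $W^{2,p}_\tau$. If such a $u$ is found with $\|u\|_{L^\infty} < 1$, then $\phi := 1+u > 0$ and $-a\Delta\phi + R\phi = \chi_r R\,\phi$, so
\begin{equation}
R_{g'} \;=\; \phi^{1-N}\bigl(-a\Delta \phi + R\phi\bigr) \;=\; \phi^{2-N}\chi_r R,
\end{equation}
which vanishes on the neighborhood of infinity $\{\rho > 2r\}$, giving exactly the desired conclusion.

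The first observation is that $\|(1-\chi_r)R\|_{L^p_{\tau-2}} \to 0$ as $r \to \infty$, which is immediate from $R \in L^p_{\tau-2}$ and dominated convergence. The key analytic step, and the one I expect to be the main technical point, is to show that multiplication by $(1-\chi_r)R$ defines a bounded operator $W^{2,p}_\tau \to L^p_{\tau-2}$ whose operator norm is controlled by $\|(1-\chi_r)R\|_{L^p_{\tau-2}}$. For $v \in W^{2,p}_\tau$ the Sobolev embedding $W^{2,p}_\tau \hookrightarrow L^\infty_\tau$ (available since $p > n/2$) yields $|v| \le C\,\|v\|_{W^{2,p}_\tau}\,\rho^\tau$, and since $\tau < 0$ and $\rho \ge 1$ the factor $\rho^\tau \le 1$ absorbs precisely the change of weight required to convert a pointwise estimate into
\begin{equation}
\|(1-\chi_r) R v\|_{L^p_{\tau-2}} \le C \,\|v\|_{W^{2,p}_\tau}\, \|(1-\chi_r)R\|_{L^p_{\tau-2}}.
\end{equation}

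Because the Laplacian $-a\Delta \colon W^{2,p}_\tau \to L^p_{\tau-2}$ is an isomorphism for $\tau \in (2-n,0)$ (Section~\ref{sec:AEManifolds}), and $L_r$ differs from it by a perturbation of vanishing operator norm, $L_r$ is itself an isomorphism for all $r$ sufficiently large, with inverse bounded uniformly in $r$. Solving the equation then produces $u \in W^{2,p}_\tau$ with $\|u\|_{W^{2,p}_\tau} \le C\,\|(1-\chi_r)R\|_{L^p_{\tau-2}} \to 0$, and one further use of the Sobolev embedding $W^{2,p}_\tau \hookrightarrow L^\infty$ gives $\|u\|_{L^\infty} \to 0$. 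For $r$ large enough this forces $\phi = 1 + u > 0$ everywhere on $M$, completing the construction. Once the weighted operator-norm bound on the perturbation is in hand, the remainder is a standard Neumann-series / openness-of-invertibility argument.
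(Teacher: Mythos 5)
Your proof is correct, but it takes a genuinely different route from the paper's. The paper works on the exterior coordinate region $E_r$ of each end: it uses Lemma \ref{lem:UniformOuterApproximation} to show $E_r$ is Yamabe positive for large $r$, proves that the Dirichlet problem for $-a\Delta+\eta R$ on $E_r$ is an isomorphism (Yamabe positivity rules out the kernel via an integration by parts), and then runs a continuity method in $\eta\in[0,1]$ together with the weak Harnack inequality to produce a positive solution of the exact equation $-a\Delta\phi+R\phi=0$ on $E_r$ with boundary value $1$, finally extending $\phi$ arbitrarily to the interior. You instead truncate the potential rather than the domain, solving a single globally posed linear equation and treating $-a\Delta+(1-\chi_r)R$ as a perturbation of the isomorphism $-a\Delta\colon W^{2,p}_\tau\to L^p_{\tau-2}$; your multiplication estimate via $W^{2,p}_\tau\hookrightarrow L^\infty_\tau$ and the observation that $\rho^\tau\le 1$ absorbs the weight shift is exactly right, and positivity of $\phi=1+u$ comes for free from $\|u\|_{L^\infty}\to 0$ rather than from a Harnack-plus-connectedness argument. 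Your approach is more elementary and self-contained: it avoids boundary value problems, the continuity method, the Harnack inequality, and the Yamabe-invariant machinery of Section \ref{sec:FirstEigenvalue} altogether. What the paper's argument buys in exchange is a conformal factor that solves the homogeneous equation on a prescribed exterior region $E_r$ (rather than on $\{\rho>2r\}$ for some sufficiently large $r$ determined by a smallness condition), but for the statement of the lemma the two conclusions are equivalent.
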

\begin{proof}
We prove the result for a manifold with one end; the extension to
several ends can be done by repeated application of our argument.
Let $E_r$ be the region outside the coordinate ball of radius $r$ in end 
coordinates. By Lemma 
\ref{lem:UniformOuterApproximation}, $y(E_r)>0$ for $r$ large enough.
Following \cite{Maxwell05b} Proposition 3 we claim that 
\begin{equation} \label{eq:DirLap}
-a\Delta + \eta R : \{u \in W^{2,p}_\tau(E_R): u|_{\p E_r} = 0\} \to L^p_{\tau-2}(E_R)
\end{equation} is an isomorphism for all $\eta \in [0,1]$. Because we assume 
homogenous boundary conditions, the argument in \cite{Bartnik86} 
Propositions 1.6 through 1.14 showing that $-a\Delta+\eta R$ is Fredholm of index zero 
requires no changes except imposing the boundary condition. Suppose, then,
to produce a contradiction, that there exists a nontrivial $u$ in the kernel. 
An argument parallel to \cite{Maxwell05b} Lemma 3 implies
$u\in W^{2,p}_{\tau'}$ for any $\tau' \in (2-n,0)$.  In particular,
the extension of $u$ by zero to $M$ belongs to $W^{1,2}_{\delta^*}(M)$ and hence
also to $A(E_r)$. Integration by parts implies $Q^y(u) = 0$, which contradicts
the fact that $E_r$ is Yamabe positive. Thus $-a\Delta+\eta R$ is an isomorphism.

Let $u_\eta$ be the nontrivial solution in $\{u \in W^{2,p}_\tau(E_r): u|_{\p E_r} =0\}$ of
\begin{equation}
  -a\Delta u_\eta + \eta R u_\eta = -\eta R.
\end{equation} Then $\phi_\eta:= u_\eta +1$ solves 
\begin{equation}
-a\Delta \phi_\eta + \eta R \phi_\eta = 0
\end{equation} on $E_r$. Let $I = \{\eta \in [0,1]: \phi_\eta>0\}$. Since
$\phi_0 \equiv 1$, $I$ is nonempty. The set of $u_\eta$ such that $u_\eta>-1$
is open in $W^{2,p}_\tau \subset C^0_\tau$. Thus, by the continuity of the map 
$\eta\mapsto u_\eta$, $I$ is open. Suppose $\eta_0 \in \overline{I}$. If 
$\phi_{\eta_0} =0$ somewhere, the weak Harnack inequality \cite{trudinger-measurable}
implies that $\phi_{\eta_0} \equiv 0$, which
contradicts the fact that $\phi_{\eta_0} \to 1$ at infinity. Thus $\phi_{\eta_0} >0$
on $E_r$, and so $I$ is closed. Thus $I = [0,1]$, and $\phi_1>0$.
Let $\phi$ be an arbitrary positive $W^{2,p}_\tau$ extension of 
$\phi_1|_{E_{r}}$.
\end{proof}

Consider the family of functionals
\begin{equation}\label{eq:DefOfF}
F_q(u) = \int a |\nabla u|^2 + \int R (u+1)^2 - \frac{2}{q} \int R' \left|u+1\right|^q
\end{equation}
for $q\in [2,N)$.

Broadly, the strategy of the proof
is to construct minimizers $u_q$ of the subcritical functionals,
and then establish sufficient control to show that $(1+u_q)$ converges in
the limit $q\ra N$ to the desired conformal factor.
The following uniform coercivity estimate, which we prove following a variation
of techniques found in \cite{Rauzy95}, is the key step in showing the existence
of subcritical minimizers.

\begin{prop}[Coercivity of $F_q$]\label{prop:Coercivity} Suppose
 $\{R'=0\}$ is Yamabe positive, let $\delta>\delta^*$, and let $q_0\in(2,N)$.
For every $B\in\R$  there is a $K>0$ such that for all $q\in [q_0,N)$ and
$u\in W^{1,2}_{\delta^*}$ with $u\ge -1$, if $\|u\|_{2,\delta}>K$ then $F_q(u)>B$.
\end{prop}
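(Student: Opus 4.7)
The argument is by contradiction, following the overall strategy of \cite{Rauzy95}.  Suppose the proposition fails: for some $B$ there exist sequences $q_k\in[q_0,N)$ and $u_k\in W^{1,2}_{\delta^*}$ with $u_k\ge -1$ and $F_{q_k}(u_k)\le B$, yet $t_k:=\|u_k\|_{2,\delta}\to\infty$.  Set $v_k:=u_k/t_k$, so $\|v_k\|_{2,\delta}=1$ and $v_k\ge -1/t_k$.  Because $R'\le 0$, the nonlinear term in $F_{q_k}$ is nonnegative.  Under the simplifying hypotheses already invoked (that $R$ and $R'$ both vanish in a neighbourhood of infinity, with $R'$ bounded), expanding $(u_k+1)^2 = t_k^2 v_k^2 + 2 t_k v_k + 1$ gives
\begin{equation*}
\tfrac{1}{t_k^2}\int R(u_k+1)^2 \;=\; \int R v_k^2 + O(1/t_k),
\end{equation*}
while Lemma \ref{lem:RIntegralBound} applied to $v_k$ yields $\bigl|\int R v_k^2\bigr|\le (a/2)\int|\nabla v_k|^2 + C$.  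Dividing $F_{q_k}(u_k)\le B$ by $t_k^2$ and rearranging then produces simultaneously a uniform bound on $\int|\nabla v_k|^2$ and on $(1/t_k^2)\int(-R')(u_k+1)^{q_k}$.

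The gradient bound, together with $\|v_k\|_{2,\delta}=1$ and Lemma \ref{lem:Qplusdelta}, shows $\{v_k\}$ is bounded in $W^{1,2}_{\delta^*}$.  Passing to a subsequence we obtain $v_k\rightharpoonup v$ in $W^{1,2}_{\delta^*}$; by the compact embedding $W^{1,2}_{\delta^*}\hookrightarrow L^2_\delta$, also $v_k\to v$ strongly in $L^2_\delta$ and, on a further subsequence, pointwise a.e.  Hence $\|v\|_{2,\delta}=1$, so $v\not\equiv 0$, and $v\ge 0$ a.e.\ because $v_k\ge -1/t_k\to 0$.  Weak lower semicontinuity of $\int|\nabla v_k|^2$, weak continuity of $u\mapsto\int R u^2$ from Lemma \ref{lem:RIntegralBound}, and the nonnegativity of the nonlinear term in $F_{q_k}/t_k^2$ together yield
\begin{equation*}
\int a|\nabla v|^2 + R v^2 \;\le\; \liminf_k \left(\int a|\nabla v_k|^2 + R v_k^2\right) \;\le\; 0.
\end{equation*}

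The crucial remaining step is to show $v$ is supported in $\{R'=0\}$.  On $\{v>0\}$, a.e.\ convergence gives $v_k\ge v/2$ for $k$ large (depending on the point), so $u_k+1 = t_k v_k + 1 \ge t_k v/2$ and
\begin{equation*}
\frac{(-R')(u_k+1)^{q_k}}{t_k^2} \;\ge\; (-R')(v/2)^{q_k}\, t_k^{q_k - 2}.
\end{equation*}
Since $q_k - 2\ge q_0-2 > 0$ and $t_k\to\infty$, this nonnegative integrand tends pointwise to $+\infty$ on $\{v>0\}\cap\{R'<0\}$.  Fatou's lemma then forces this intersection to have measure zero, for otherwise $\liminf_k \int(-R')(u_k+1)^{q_k}/t_k^2 = +\infty$, contradicting the uniform upper bound established above.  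Therefore $v\in A(\{R'=0\})$ with $Q_\delta(v)\le 0$, which forces $\lambda_\delta(\{R'=0\})\le 0$ and contradicts Yamabe positivity of $\{R'=0\}$ via Proposition \ref{cor:MeasTFAE}.

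The main obstacle is the final Fatou step: the uniform gap $q_k - 2 \ge q_0 - 2 > 0$ is indispensable, since it is precisely what produces the factor $t_k^{q_k-2}\to\infty$ that forces $v$ to vanish off the zero set of $R'$; any attempt to weaken $q_0>2$ would destroy the super-quadratic amplification.  A secondary technical nuisance is that $u_k+1$ itself does not lie in $W^{1,2}_{\delta^*}$, which is why the simplifying hypothesis that $R$ has compact support is so convenient: it lets us cleanly decompose $(u_k+1)^2 = t_k^2 v_k^2 + 2 t_k v_k + 1$ inside $\int R(\cdot)$ without worrying about the constant term at infinity.
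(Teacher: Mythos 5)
Your argument is correct, but it takes a genuinely different route from the paper. The paper gives a direct, quantitative proof: it introduces the concentration sets $A_\eta=\{u\ge-1:\int|R'|u^2\le\eta\|u\|_{2,\delta}^2\int|R'|\}$, uses one compactness argument to show that for small $\eta_0$ every $u\in A_{\eta_0}$ satisfies $\int a|\nabla u|^2+Ru^2\ge\calL\|u\|_{2,\delta}^2$ with $\calL\in(0,\lambda_\delta(\{R'=0\}))$, and then splits into two cases: for $u\notin A_{\eta_0}$ a reverse H\"older inequality gives $\int|R'||u|^q\ge\eta_0^{q/2}\|u\|_{2,\delta}^q\int|R'|$, so the superquadratic term dominates (this is where $q\ge q_0>2$ enters for the paper), while for $u\in A_{\eta_0}$ the quadratic positivity alone is coercive. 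You instead run a single blow-up contradiction: normalize $v_k=u_k/t_k$, extract a weak limit $v$ with $\|v\|_{2,\delta}=1$ and $Q_\delta(v)\le0$, and use Fatou together with the amplification factor $t_k^{q_k-2}\ge t_k^{q_0-2}\to\infty$ to force $v$ to vanish on $\{R'<0\}$, contradicting $\lambda_\delta(\{R'=0\})>0$ via Proposition \ref{cor:MeasTFAE}. The underlying mechanism is the same in both proofs (superquadratic growth off the zero set, Yamabe positivity on it), but your version concentrates all the compactness into one limit extraction and avoids the H\"older manipulation, at the cost of making the constant $K$ entirely non-explicit; the paper's two-case structure keeps the dependence of $K$ on $B$, $q_0$, and $\eta_0$ visible. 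One small imprecision: before the gradient bound is established, the cross term $\frac{2}{t_k}\int Rv_k$ is not yet literally $O(1/t_k)$, since bounding $\int Rv_k$ via Lemma \ref{lem:RIntegralBound} reintroduces $\|\nabla v_k\|_2$; you must first absorb an $\epsilon\|\nabla v_k\|_2^2$ contribution into the left-hand side to get the uniform gradient bound, and only then does the cross term become $O(1/t_k)$ for the $\liminf\le0$ step. This is easily repaired and does not affect the validity of the argument.
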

\begin{proof}
For $\eta>0$ let
\begin{equation}
A_{\eta} = \left\{ u\in W^{1,2}_{\delta^*}, u\ge -1:
   \int |R'| |u|^2 \le \eta \|u\|_{2,\delta}^2 \int |R'|\right\}.
\end{equation}
Morally, $u\in A_{\eta}$ if it is concentrated on the zero set
\begin{equation}
Z = \{R'=0\},
\end{equation}
with greater concentration as $\eta\ra 0$.

Fix $\calL\in(0,\lambda_\delta(Z))$.
We first claim that there is an $\eta_0<1$
such that if $u\in A_{\eta_0}$, then
\begin{equation}\label{eq:pseudo-yamabe-positive}
\int a |\nabla u|^2 + Ru^2 \ge \calL \|u\|_{2,\delta}^2.
\end{equation}
Suppose to the contrary
that this is false, and let $\eta_k$ be a sequence converging to $0$.
We can then construct a sequence $v_{k}$ with each $v_k\in A_{\eta_k}$ such that
$\|v_{k}\|_{2,\delta}=1$ and
\begin{equation}
\int a |\nabla v_{k}|^2 + Rv_{k}^2< \calL.
\end{equation}
Note that $\calL$ is finite even if $\lambda_\delta(Z)=\infty$.  So
from the boundedness of the sequence $v_k$ in $L^2_\delta$ and 
Lemma \ref{lem:Qplusdelta}, the sequence is bounded in $W^{1,2}_{\delta^*}$,
and a subsequence (which we reduce to) converges weakly in $W^{1,2}_{\delta^*}$
and strongly in $L^2_{\delta}$ to a limit $v$ with $\|v\|_{2,\delta}=1$.
Now
\begin{align}
0 \le \int |R'| v_k^2 &\le \eta_k \int |R'|\rightarrow 0.
\end{align}
Since $|R'| v_k^2 \rightarrow |R'| v^2$ in $L^1$
we conclude $v=0$ outside of $Z$.  From weak upper semicontinuity
(Corollary \ref{cor:uppersc}) we conclude
\begin{equation}\label{eq:v_was_small}
\int a|\nabla v|^2 + Rv^2 \le \calL
\end{equation}
as well. However, since $v$ is supported in $Z$
\begin{equation}
\int a|\nabla v|^2 + Rv^2 \ge \lambda_\delta(Z)\|v\|_{2,\delta}^2 = \lambda_\delta(Z) > \calL,
\end{equation}
which is a contradiction, and establishes inequality \eqref{eq:pseudo-yamabe-positive}.

Let $B\in\R$
and suppose $q\in(q_0,N)$, $u\in W^{1,2}_{\delta^*}$ and $u\ge -1$. We wish to show that
there is a $K$ independent of $q$ so that if $\|u\|_{2,\delta}>K$ then $F_q(u)>B$.
It is enough to find a choice of $K$ under two cases depending on
whether $u\in A_{\eta_0}$ or not. When $u$ is concentrated on $Z$, the coercivity
will follow from the fact that $Z$ is Yamabe positive (as used to obtain
inequality \eqref{eq:pseudo-yamabe-positive}),
and when $u$ is not concentrated on $Z$ the coercivity will follow from the
fact that $R'<0$ away from $Z$.

Suppose that $u\not\in A_{\eta_0}$, so
\begin{equation}\label{eq:u_not_in_A}
\int |R'| |u|^2 > \eta_0 \|u\|_{2,\delta}^2 \int |R'|.
\end{equation}
We calculate
\begin{equation}\label{eq:F_qstep1a}
\begin{aligned}
F_q(u) &= \int a|\nabla u|^2 + \int R(u+1)^2 + \frac{2}{q} \int |R'||u+1|^q\\
&\ge \int a|\nabla u|^2 -2 \int |R|(u^2+1) + \frac{2}{q} \int |R'|(|u|^q-1)\\
&\ge \int \frac{a}{2}|\nabla u|^2 - C\|u\|_{2,\delta}^2 -2 \int |R| + \frac{2}{q} \int |R'| (|u|^q-1)\\
&\ge \int \frac{a}{2}|\nabla u|^2 - C\|u\|_{2,\delta}^2 
      -2\int \left(|R| + \frac{1}{q}|R'|\right) + \frac{2}{q} \int |R'| |u|^q.
\end{aligned}
\end{equation}
Here we have applied Lemma \ref{lem:RIntegralBound} to determine the constant $C>0$, and
have used the fact that $(u+1)^q\ge |u|^q-1$ for $u\ge -1$. Inequality
\eqref{eq:u_not_in_A} and H\"older's inequality imply
\begin{equation}
\begin{aligned}
\eta_0\|u\|_{2,\delta}^2 \int |R'| &<  \int |R'| |u|^2
\le \left(\int |R'| |u|^q\right)^{\frac 2 q }
\left(\int |R'| \right)^{1-\frac 2 q }
\end{aligned}
\end{equation}
and hence
\begin{equation}\label{eq:F_qstep1b}
(\eta_0)^\frac{q}{2}\|u\|_{2,\delta}^q \int |R'| \le \int |R'| |u|^q.
\end{equation}
Using the fact that $\eta_0<1$ and $q<N$, inequalities \eqref{eq:F_qstep1a} and
\eqref{eq:F_qstep1b} imply at last that
\begin{equation}
F_q(u) \geq
\int \frac{a}{2}|\nabla u|^2 - C\|u\|_{2,\delta}^2 -2\int \left(|R| + \frac{1}{q}|R'|\right)
 + \frac{2}{q} (\eta_0)^{\frac{N}{2}} \|u\|_{2,\delta}^q \int |R'|.
\end{equation}
We note that $\int |R'|>0$, for otherwise condition \eqref{eq:u_not_in_A}
is impossible,
and hence the coefficient on $\|u\|_{2,\delta}^q$ is positive. Since $q>2$,
there is a $K$ such that if  $\|u\|_{2,\delta}>K$,
$F_q(u)\ge B$. Note that since $C$ is independent of $q\geq q_0$, so is the choice of $K$.

Now suppose $u\in A_{\eta_0}$, so inequality \eqref{eq:pseudo-yamabe-positive} holds.
Then for any $\epsilon>0$,
\begin{equation}
\begin{aligned}\label{eq:F_q_near_zero}
F_q(u) &\ge \int a|\nabla u|^2 + \int R (u+1)^2\\
&= \int a|\nabla u|^2 + Ru^2 + \int R\left[ (u+1)^2 - u^2\right]\\
&\ge \int a|\nabla u|^2 + Ru^2 - \int |R|\left[ \epsilon u^2 +1+\frac{1}{\epsilon}\right]\\
&\ge (1-\epsilon) \left[\int a|\nabla u|^2 + Ru^2\right] + \epsilon \int (a|\nabla u|^2-2|R|u^2)
- \left(1+\frac{1}{\epsilon}\right) \int |R|\\
&\ge (1-\epsilon) \calL \|u\|_{2,\delta}^2
+ \epsilon\left( \int \frac{a}{2}|\nabla u|^2-C\|u\|_{2,\delta}^2\right)
-\left(1+\frac{1}{\epsilon}\right) \int |R|\\
&\ge \left[ (1-\epsilon) \calL-\epsilon C\right]
\|u\|_{2,\delta}^2 + \epsilon \int \frac{a}{2}|\nabla u|^2
-\left(1+\frac{1}{\epsilon}\right) \int |R|.
\end{aligned}
\end{equation}
Here we have applied Lemma \ref{lem:RIntegralBound} to determine the constant $C$,
inequality \eqref{eq:pseudo-yamabe-positive},
and the fact that $(u+1)^2-u^2 \le \epsilon u^2 +1 + (1/\epsilon)$
for all $u\geq -1$ and all $\epsilon>0$.  We can pick $\epsilon$
sufficiently small such that the coefficient of $\|u\|_{2,\delta}$
in the final expression of inequality \eqref{eq:F_q_near_zero} is at
least $\calL/2$.  Hence there is a $K$ such that
if $\|u\|_{2,\delta}\ge K$, $F_q(u)\ge B$.
Since $C$ is independent of $q\geq q_0$, so is $\epsilon$ and the choice of $K$.
\end{proof}

\begin{lem}\label{lem:ContinuityOfF}
For $q<N$ the operator $F_q$ is weakly upper semicontinuous on
$W^{1,2}_{\delta^*}$.
\end{lem}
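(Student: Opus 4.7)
The plan is to establish, for any weakly convergent sequence $u_k \rightharpoonup u$ in $W^{1,2}_{\delta^*}$, the inequality $F_q(u) \le \liminf_{k\to\infty} F_q(u_k)$ (matching the convention used in Corollary \ref{cor:uppersc}). I would analyze each of the three pieces of
\[
F_q(u) = \int a|\nabla u|^2 + \int R(u+1)^2 - \tfrac{2}{q}\int R'|u+1|^q
\]
separately and add the results.

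The gradient term is immediate: $u \mapsto \int |\nabla u|^2$ is weakly lower semicontinuous on $W^{1,2}_{\delta^*}$ by convexity of the $L^2$ norm. For the scalar curvature term, I would expand $\int R(u+1)^2 = \int R u^2 + 2\int R u + \int R$. The piece $\int R u^2$ is weakly continuous by Lemma \ref{lem:RIntegralBound}. The piece $\int Ru$ is a continuous linear functional of $u$, hence weakly continuous: under the standing reductions $R$ is compactly supported in some set $K$, and the local Sobolev embedding $W^{1,2}(K) \hookrightarrow L^{p/(p-1)}(K)$ (valid since $p>n/2$) allows us to pair $R \in L^p(K)$ against $u$ via H\"older. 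The constant $\int R$ is finite for the same reason.

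The crucial step is to show weak continuity of the subcritical term $u \mapsto \int R'|u+1|^q$. Under the reductions, $R'$ is bounded and compactly supported in some $K'$. The compactness criterion of Section \ref{sec:AEManifolds} gives a compact embedding $W^{1,2}_{\delta^*} \hookrightarrow L^q(K')$ for $q < N$, so $u_k \to u$ strongly in $L^q(K')$, whence $u_k + 1 \to u + 1$ in $L^q(K')$. From the pointwise estimate $\bigl| |a|^q - |b|^q \bigr| \le q(|a|+|b|)^{q-1}|a-b|$ and H\"older's inequality on $K'$ with conjugate exponents $q/(q-1)$ and $q$, we obtain $|u_k+1|^q \to |u+1|^q$ in $L^1(K')$; multiplying by the bounded $R'$ yields $\int R'|u_k+1|^q \to \int R'|u+1|^q$. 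This step is the main obstacle, and it depends crucially on strict subcriticality $q < N$: at the Yamabe exponent $q = N$ the embedding ceases to be compact (the classical loss-of-compactness phenomenon), and mere weak $L^N$ convergence is insufficient to pass to the limit in $|u_k+1|^N$. Summing the three contributions then gives $F_q(u) \le \liminf F_q(u_k)$, as desired.
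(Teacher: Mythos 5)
Your proof is correct and follows essentially the same route as the paper's: weak lower semicontinuity (the paper's ``upper semicontinuity'') of the gradient term, weak continuity of the $R$-terms via Lemma \ref{lem:RIntegralBound} and linearity, and weak continuity of the subcritical term via the compact embedding $W^{1,2}_{\delta^*}\hookrightarrow L^q$ for $q<N$. The only (harmless) difference is that you localize to the support of $R'$ and spell out the elementary estimate giving $L^1$-convergence of $|u_k+1|^q$, where the paper simply invokes continuity of the map on $L^q_\delta$.
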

\begin{proof}
Lemma \ref{lem:RIntegralBound} together with the weak continuity of continuous linear
maps implies
\begin{equation}
u\mapsto \int a|\nabla u|^2 + R(u+1)^2
\end{equation}
is weakly upper semicontinuous on $W^{1,2}_{\delta^*}$.  Hence it suffices
to show that
\begin{equation}\label{eq:weakmap2}
u\mapsto \int R'|u+1|^{q-1}
\end{equation}
is weakly continuous on $W^{1,2}_{\delta^*}$.  But fixing $\delta>\delta^*$
we know that the embedding $W^{1,2}_{\delta^*}\hookrightarrow L^{q}_{\delta}$
is compact and that the map \eqref{eq:weakmap2} is continuous on $L^q_{\delta}$.
\end{proof}

We now obtain existence of subcritical minimizers from the coercivity of $F_q$,
along with uniform estimates in $W^{1,2}_{\delta^*}$ for the minimizers.

\begin{lem}\label{lem:subcriticalExistence}
For any $q_0 \in (2,N)$, for each $q\in [q_0,N)$, there exists $u_q>-1$,
bounded in $W^{1,2}_{\delta^*}$ independent of $q$, which minimizes $F_q$
and is a weak solution of
\begin{equation}\label{eq:almost-scalar-curvature}
-a\Delta (u_q+1) + R(u_q+1) = R'(u_q+1)^{q-1}.
\end{equation}
Moreover, $u_q \in W^{2,p}_{\sigma}$ for every $\sigma\in(2-n,0)$.
\end{lem}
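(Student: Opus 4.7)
The plan is to apply the direct method to $F_q$ on the admissible set $\mathcal A := \{u \in W^{1,2}_{\delta^*} : u \ge -1 \text{ a.e.}\}$, then derive the Euler--Lagrange equation by reformulating in terms of $\phi = u+1$ and exploiting the symmetry $G_q(|\phi|) = G_q(\phi)$ of the associated functional.

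First, I would take a minimizing sequence $\{v_k\} \subset \mathcal A$ for $F_q$. Since $0 \in \mathcal A$ and, under the simplifying hypotheses, $F_q(0) = \int R - (2/q)\int R'$ is uniformly bounded in $q \in [q_0, N)$, one has $\inf_{\mathcal A} F_q \le C_0$ for some $C_0$ independent of $q$. Proposition \ref{prop:Coercivity} applied with $B = C_0 + 1$ then furnishes a constant $K$, uniform in $q \in [q_0, N)$, such that any such minimizing sequence satisfies $\|v_k\|_{2,\delta} \le K$. Since $R' \le 0$ gives $F_q(v_k) \ge \int a|\nabla v_k|^2 + R(v_k+1)^2$, combining with Lemmas \ref{lem:RIntegralBound} and \ref{lem:Qplusdelta} upgrades this to a uniform $W^{1,2}_{\delta^*}$ bound on $v_k$. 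I would then extract a subsequence converging weakly in $W^{1,2}_{\delta^*}$ and strongly in $L^2_\delta$ (by compact embedding), hence a.e.\ along a further subsequence, to a limit $u_q$; the pointwise convergence forces $u_q \ge -1$ a.e., and the weak semicontinuity afforded by Lemma \ref{lem:ContinuityOfF} gives $F_q(u_q) \le \liminf F_q(v_k) = \inf_{\mathcal A} F_q$. Hence $u_q$ is a minimizer, and applying the same estimates to $u_q$ directly (using $F_q(u_q) \le C_0$) produces the uniform $W^{1,2}_{\delta^*}$ bound.

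For the Euler--Lagrange equation, I would set $\phi := u+1$ and $G_q(\phi) := F_q(\phi-1)$ on the affine space $\{\phi : \phi - 1 \in W^{1,2}_{\delta^*}\}$. The equality $\bigl|\nabla|\phi|\bigr| = |\nabla \phi|$ a.e.\ yields $G_q(|\phi|) = G_q(\phi)$, so the infimum of $G_q$ over non-negative $\phi$ coincides with the unconstrained infimum, and $\phi_q := u_q + 1$ is a critical point of the unconstrained problem. Perturbing by arbitrary $\psi \in C^\infty_c(M)$ then produces the weak form of \eqref{eq:almost-scalar-curvature}. For regularity, I would rearrange this as $-a\Delta u_q + R u_q = R'(u_q+1)^{q-1} - R$; under the simplifying hypotheses ($R, R'$ compactly supported and bounded), the right-hand side lies in $L^{p'}_{\sigma - 2}$ for some $p' > 1$, because $u_q \in L^N$ via the Sobolev embedding of $W^{1,2}_{\delta^*}$. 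A standard bootstrap using the Laplacian isomorphism $-a\Delta : W^{2,p'}_\sigma \to L^{p'}_{\sigma - 2}$ from \cite{Bartnik86} then places $u_q$ in $W^{2,p}_\sigma$ for every $\sigma \in (2-n, 0)$, whence $u_q \in C^0_\sigma$ and $\phi_q \to 1$ at infinity. Rewriting as $-a\Delta \phi_q + [R - R'\phi_q^{q-2}]\phi_q = 0$ and applying the weak Harnack inequality \cite{trudinger-measurable} exactly as in the proof of Lemma \ref{lem:ZeroNearInfinity} rules out $\phi_q \equiv 0$ and forces $\phi_q > 0$ everywhere, so $u_q > -1$.

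The main obstacle is the uniform-in-$q$ nature of the $W^{1,2}_{\delta^*}$ bound; this rests on both the uniform coercivity constant in Proposition \ref{prop:Coercivity} and the uniform bound on the competitor value $F_q(0)$ under the simplifying hypotheses. A secondary subtlety is that the obstacle constraint $u \ge -1$ could in principle obstruct a clean derivation of the Euler--Lagrange equation; the symmetry $G_q(|\phi|) = G_q(\phi)$ removes this difficulty by permitting arbitrary sign-changing variations.
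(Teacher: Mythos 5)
Your proposal is correct and follows essentially the same route as the paper: the direct method with the uniform coercivity of Proposition \ref{prop:Coercivity} and the weak semicontinuity of Lemma \ref{lem:ContinuityOfF}, the symmetry $F_q(u)=F_q(|1+u|-1)$ to dispose of the constraint $u\ge -1$ and obtain the unconstrained Euler--Lagrange equation, the weak Harnack inequality for strict positivity, and a subcritical bootstrap via the weighted Laplacian isomorphism for the $W^{2,p}_\sigma$ regularity. The only differences are cosmetic reorderings (the paper symmetrizes the minimizing sequence rather than the minimizer, and rules out $1+u_q\equiv 0$ via the fact that nonzero constants are not in $W^{1,2}_{\delta^*}$ before bootstrapping), plus a harmless misstatement on your part that $R$ is bounded (it is only compactly supported and in $L^p$, which suffices).
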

\begin{proof}
Let $B= \int R + \int |R'|$, let $\delta>\delta^*$, and let $q_0\in (2,N)$. Observe that
\begin{equation}
F_q(0) \le B
\end{equation}
for all $q\in (q_0,N)$. Let $K$ be the constant associated with $B$, $\delta$ and $q_0$
obtained from Proposition \ref{prop:Coercivity}.
Fix $q\in(q_0,N)$ and let $u_k$ be a minimizing sequence in
$W^{1,2}_{\delta^*}$ for $F_q$.
Without loss of generality, we can assume each $u_k\ge -1$ since
$F_q(u_k)=F_q(\max(u_k,-2-u_k))$. We
can assume that each $F_q(u_k)\le F_q(0) \le B$ and hence
Proposition \ref{prop:Coercivity} implies each
$\|u_k\|_{2,\delta}\le K$. Since
\begin{equation}
\int a|\nabla u_k|^2+R(1+u_k)^2 \le F_q(u_k) < B
\end{equation}
as well, Lemma \ref{lem:Qplusdelta} implies that there is a $C>0$ such that each
$\|u_k\|_{W^{1,2}_{\delta^*}}\le C$.
Note that $C$ depends on $K$ and $B$, which are
independent of $q\geq q_0$.
A subsequence (which we reduce to) converges weakly in $W^{1,2}_{\delta^*}$ and
strongly in $L^q_{\delta}$ to a limit $u_q\ge -1$. Lemma \ref{lem:ContinuityOfF}
shows $F_q$ is weakly upper semicontinuous, so $u_q$ is a minimizer.
Moreover, $\|u_q\|_{W^{1,2}_{\delta^*}}\le C$ as well.

Since $u_q$ is a minimizer, we find that $(1+u_q)$ is a weak solution of
\begin{equation}\label{eq:minlinop}
\left[-a\Delta  + R -R'(1+u_q)^{q-2}\right] (1+u_q) = 0.
\end{equation}
Since $R'\in L^\infty_{\text{loc}}$ and since $u_q\in L^{N}_{\text{loc}}$,
an easy computation shows that $R'(1+u_q)^{q-2}\in L^{r}_{\text{loc}}$ for some $r>n/2$.
Since $R\in L^{p}_{\text{loc}}$ and $g\in W^{2,p}_{\text{loc}}$ with $p>n/2$,
we find that the coefficients of the differential operator in brackets
in equation \eqref{eq:minlinop} satisfy the hypotheses of the weak Harnack
inequality of \cite{trudinger-measurable}. Hence, since $1+u_q\ge 0$ and
since the manifold is connected,
either $1+u_q>0$ everywhere or $u_q\equiv-1$. But $u_q$ decays at infinity, and so
we conclude that $1+u_q$ is everywhere positive.

We now bootstrap the regularity of $u_q$, which we know initially belongs to $L^N_{\delta^*}$.  
Fix $\sigma\in(2-n,0)$. Suppose it is known
that for some $r\ge N$ that $u_q\in L^r_\loc$. From equation \eqref{eq:minlinop},
$u_q$ solves
\begin{equation}\label{eq:bootstrap-grr}
-a\Delta u_q = R'(1+u_q)^{q-1}-R(1+u_q).
\end{equation} 
Recall that $R'\in L^\infty_\loc$ and $R\in L^p_\loc$ and both 
have compact support.  Then $R'(1+u_q)^{q-1}$ belongs to $L^{t_1}_\sigma$
with
\begin{equation}
\frac{1}{t_1} = \frac{q-1}{r} \le \frac{1}{r}+\frac{q-2}{N} < \frac{N-1}N
\end{equation}
and $R(1+u_q)$ belongs to  $L^{t_2}_\sigma$ with
\begin{equation}
\frac{1}{t_2} = \frac{1}{r} + \frac{1}{p}.
\end{equation}
Let $t=\min(t_1,t_2)$ and note that $t<p$ since $t_2<p$. From
\cite{Bartnik86} Proposition 1.6 we see that $u_q$ is a strong
solution of \eqref{eq:bootstrap-grr} and from \cite{Bartnik86} Proposition 2.2, 
which implies $\Delta:W^{2,t}_{\sigma}\rightarrow L^t_\sigma$ 
is an isomorphism for $1<t\le p$, we conclude that
$u_q \in W^{2,t}_{\sigma}$. From Sobolev embedding we obtain
$u_q\in L^{r'}_\sigma$ where 
\begin{equation}
\frac 1 {r'} = \frac 1 t-\frac{2}{n},
\end{equation}
so long as $1/t> n/2$, at which point the bootstrap changes as discussed below.
Now
\begin{equation}\label{eq:boothalf1}\begin{aligned}
\frac{1}{t_1}-\frac{2}{n} &\le  \frac{1}{r} + \frac{q-2}{N} - \frac{2}{n} \\
&= \frac{1}{r} +\frac{q}{N} - \left[ \frac{2}{N}+\frac{2}{n}\right]\\
&= \frac{1}{r} +\left[\frac{q}{N}-1\right].
\end{aligned}\end{equation}
Also,
\begin{equation}
\frac{1}{t_2} -\frac{2}{n} = \frac{1}{r} + \left[\frac{1}{p} -\frac{2}n\right].\label{eq:boothalf2}
\end{equation}
Let $\epsilon = \min(1-q/N, 2/n-1/p )$ and note that $\epsilon$
is positive and independent of $r$. Inequalities \eqref{eq:boothalf1}
and \eqref{eq:boothalf2} imply
\begin{equation}
\frac{1}{r'} \le \frac{1}{r} - \epsilon
\end{equation}
Hence, after a finite number of iterations (depending on the size of $\epsilon$, and
hence on how close $q$ is to $N$) we can reduce $1/r$ by multiples of $\epsilon$ 
until $1/r\le \epsilon$.  At this point the bootstrap changes, and in at most
two more iterations we can conclude that $u_q\in L^\infty_\sigma$ and also 
$u_q\in W^{2,p}_\sigma$.

\end{proof}

The uniform $W^{1,2}_{\delta^*}$ bounds of Lemma \ref{lem:subcriticalExistence}
are enough to obtain the existence of a solution $u$ in $W^{2,N/(N-1)}_\sigma$
of equation \eqref{eq:almost-scalar-curvature}
with $q=N$. At the end of Section IV.6 of \cite{Rauzy95} it is
claimed that on a compact manifold in the smooth setting that 
elliptic regularity now implies 
$u$ is smooth.  But in fact this is not quite enough regularity to start a bootstrap:
$W^{2,N/(N-1)}_\sigma$ embeds continuously in $L^N_\sigma$, which is no more
regularity than was known initially.  To start a bootstrap and ensure
the continuity of $u$ we need
the following improved estimate, which follows a modification of the strategy
of \cite{LP87} Proposition 4.4.

\begin{lem}\label{UniformSubcriticalBound}
For each compact set $K$, the minimizers $u_q$ are uniformly bounded in $L^M(K)$
for some $M>N$.
\end{lem}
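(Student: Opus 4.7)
The plan is to run a single-step Moser iteration adapted to the sign condition $R' \leq 0$, in the spirit of \cite{LP87} Proposition 4.4. Set $\phi_q := 1 + u_q$; from Lemma \ref{lem:subcriticalExistence}, $\phi_q > 0$, $\phi_q$ is locally continuous for each fixed $q$, and $\phi_q$ weakly satisfies
\begin{equation}
-a\Delta \phi_q + R\phi_q = R'\phi_q^{q-1} \leq 0.
\end{equation}
Fix a compact set $K$ and a cutoff $\eta \in C_c^\infty(M)$ with $\eta \equiv 1$ on $K$. For a small parameter $s > 0$ to be determined, test the subsolution inequality against $\eta^2 \phi_q^{1+2s}$ (justified, if needed, by first composing with a Lipschitz truncation of the identity at level $L$ and then letting $L \to \infty$ via monotone convergence). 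Integration by parts discards the nonpositive right-hand side and produces
\begin{equation}
a(1+2s) \int \eta^2 \phi_q^{2s}|\nabla \phi_q|^2 + \int R\, \eta^2 \phi_q^{2+2s}
\leq -2a \int \eta\, \phi_q^{1+2s}\, \nabla\phi_q \cdot \nabla\eta.
\end{equation}

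Set $w := \phi_q^{s+1}$, so that $\phi_q^{2s}|\nabla\phi_q|^2 = (s+1)^{-2}|\nabla w|^2$. A Cauchy--Schwarz absorption on the right-hand side yields
\begin{equation}
c_1 \int \eta^2 |\nabla w|^2 + \int R\, \eta^2 w^2 \leq c_2 \int w^2 |\nabla\eta|^2,
\end{equation}
with $c_1, c_2 > 0$ depending on $s$ but not on $q$. To handle the scalar curvature term, apply Lemma \ref{lem:RIntegralBound} to $v := \eta w$: for any fixed $\delta > \delta^*$,
\begin{equation}
\left| \int R v^2 \right| \leq \tfrac{c_1}{4}\|\nabla v\|_2^2 + C_\delta \|v\|_{2,\delta}^2,
\end{equation}
and combined with $\|\nabla v\|_2^2 \leq 2\int \eta^2|\nabla w|^2 + 2\int w^2|\nabla\eta|^2$ this absorbs a further portion of the left-hand side gradient term. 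Invoking the Sobolev inequality \eqref{eq:sobolev-cpct} for $\eta w$ then gives
\begin{equation}
\|\eta w\|_N^2 \leq C\left(\int w^2|\nabla\eta|^2 + \|\eta w\|_{2,\delta}^2\right).
\end{equation}

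Now choose $s > 0$ small enough that $2+2s < N$. On $\mathrm{supp}\,\eta$ the weight $\rho$ is bounded, so both terms on the right are controlled by $\int_{\mathrm{supp}\,\eta} \phi_q^{2+2s}$, which by H\"older's inequality is bounded by $C\|\phi_q\|_N^{2+2s}$. By Lemma \ref{lem:subcriticalExistence} and the embedding $W^{1,2}_{\delta^*} \hookrightarrow L^N$, $\|\phi_q\|_N$ is bounded uniformly for $q \in [q_0, N)$. Since $\eta \equiv 1$ on $K$, $\|\phi_q\|_{L^{(s+1)N}(K)}^{(s+1)N} \leq \|\eta w\|_N^N$, giving a uniform bound on $\|u_q\|_{L^M(K)}$ with $M := (s+1)N > N$.

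The main obstacle is preserving uniformity in $q$. The sign condition $R' \leq 0$ is essential, because it lets us discard the nonlinear term $R'\phi_q^{q-1}$ whose $L^1$ norm would otherwise depend badly on $q$. Every remaining constant ($c_1, c_2$, the Lemma \ref{lem:RIntegralBound} constant, the Sobolev constant, and the input $L^N$ bound on $\phi_q$) depends only on fixed data, not on $q$. A secondary technicality is the Lipschitz truncation needed to justify the integration by parts when the pointwise bound on $\phi_q$ has not yet been made uniform in $q$.
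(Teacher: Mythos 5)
Your proposal is correct and follows essentially the same argument as the paper: test the equation against $\chi^2(1+u_q)^{1+2\sigma}$, use $R'\le 0$ to discard the nonlinear term, absorb the scalar curvature term via Lemma \ref{lem:RIntegralBound}, and apply the Sobolev inequality to $\chi(1+u_q)^{1+\sigma}$ with $\sigma$ small enough that the right-hand side is controlled by the uniform $L^N$ bound. The only cosmetic difference is that the paper avoids your truncation step by noting that Lemma \ref{lem:subcriticalExistence} already gives $u_q\in L^\infty_{\mathrm{loc}}$ for each fixed $q$, so the test function is admissible directly.
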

\begin{proof}
Let $\chi$ be a smooth positive function with compact support that equals 1 in a neighborhood
of $K$.  Let $v = \chi^2 (1+u_q)^{1+2\sigma}$ where $u_q$ is a subcritical minimizer
and where $\sigma$ is a small constant to be chosen later.
Note that since $u_q \in L^{\infty}_{\text{loc}}\cap W^{1,2}_{\text{loc}}$,
$v\in W^{1,2}_{\delta^*}$.  Setting  $w =(1+u_q)^{1+\sigma}$, a short
computation shows that
\begin{equation}\label{eq:subcritbound1}
\int\chi^2 |\nabla w|^2 = -2\frac{1+\sigma}{1+2\sigma} \int \left<\chi\nabla w,w\nabla \chi\right>
+ \frac{(1+\sigma)^2}{1+2\sigma} \int \left<\nabla u_q, \nabla v\right>.
\end{equation}
Applying Young's inequality to the first term on the right-hand side of equation
\eqref{eq:subcritbound1} and merging a resulting piece into the left-hand side we conclude
there is a constant $C_1$ such that
\begin{equation}\label{eq:subcritbound2}
\| \chi \nabla w\|^2_2 \le C_1 \|w\nabla\chi\|_2^2 + 2\frac{(1+\sigma)^2}{1+2\sigma} \int
\left<\nabla u_q, \nabla v\right>.
\end{equation}
Since $u_q$ is a subcritical minimizer,
\begin{equation}\label{eq:subcritbound3}
\begin{aligned}
a\int \left<\nabla u_q, \nabla v\right> &= \int R' (1+u_q)^{q-2} \chi^2 w^2 -
\int R \chi^2 w^2 \\
&\le \left|\int R \chi^2 w^2\right| \\
&\le \epsilon\|\nabla(\chi w)\|_{2}^2  + C_\epsilon \|\chi w\|_{2}^2.
\end{aligned}
\end{equation}
We applied Lemma \ref{lem:RIntegralBound} in the last line and used
the fact that for functions with support contained in a fixed compact set,
weighted and unweighted norms are equivalent.  Note also that obtaining line 2
used the fact that $R'\leq 0$ everywhere.
Noting that there is a constant $C_2$ such that
\begin{equation}\label{eq:subcritbound4}
\| \nabla(\chi w)\|^2_2 \le C_2( \|\chi \nabla w\|_2^2 + \|w\nabla\chi\|_2^2 ),
\end{equation}
we can combine inequalities \eqref{eq:subcritbound2}, \eqref{eq:subcritbound3}, and
\eqref{eq:subcritbound4} to conclude that, upon taking $\epsilon$ sufficiently small
to absorb the term from inequality \eqref{eq:subcritbound3} into the left-hand side,
there is a constant $C_3$ such that
\begin{equation}
\| \nabla(\chi w)\|^2_2 \le C_3 \left[ \|w\nabla\chi\|_2^2 + \|w \chi\|_2^2 \right].
\end{equation}
Finally, from the Sobolev inequality \eqref{eq:sobolev-cpct},
there is a constant $C_4$ such that
\begin{equation}
\| \chi w \|_N^2 \le C_4 \left[ \|w\nabla\chi\|_2^2 + \|w \chi\|_2^2 \right]
\end{equation}
as well. 
Now $u_q$ is bounded uniformly in $L^N$ on the support $K'$ of $\chi$,
and hence we can take $\sigma$ sufficiently small so that $w$ is bounded
independent of $q$ in $L^2(K')$ as well. Thus $(1+u_q)$ is bounded uniformly
in $L^M(K)$ for $M=N(1+\sigma)$.
\end{proof}

\begin{cor} \label{cor:UniformBound} Let $p$ be the exponent such that $g$ is
a $W^{2,p}_{\tau}$ AE manifold and let $\sigma\in (2-n,0)$.
The subcritical minimizers $u_q$ are bounded in $W^{2,p}_{\sigma}$ as $q\rightarrow N$.
\end{cor}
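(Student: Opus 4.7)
The plan is to bootstrap the regularity of $u_q$ starting from the improved $L^M_{\loc}$ estimate of Lemma \ref{UniformSubcriticalBound} (with $M>N$), exploiting the margin over the critical exponent so that each step gains an amount of integrability bounded below independently of $q$. Under the simplifying reductions in force, both $R$ and $R'$ have compact support, so the right-hand side
\begin{equation*}
f_q := R'(1+u_q)^{q-1} - R(1+u_q)
\end{equation*}
of $-a\Delta u_q = f_q$ is compactly supported, and any $L^t$ bound on $f_q$ over a fixed compact set containing the supports of $R$ and $R'$ automatically produces an $L^t_{\sigma-2}$ bound for every $\sigma\in(2-n,0)$.

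First I would iterate the following bootstrap step: given a uniform $L^r_{\loc}$ bound on $u_q$, H\"older's inequality (using $R'\in L^\infty$ and $R\in L^p$, both of compact support) yields a uniform $L^t$ bound on $f_q$ with $1/t = \max\bigl((q-1)/r,\ 1/p+1/r\bigr)$. The weighted elliptic isomorphism $\Delta:W^{2,t}_\sigma\to L^t_{\sigma-2}$ of \cite{Bartnik86} Proposition 2.2 then delivers a uniform $W^{2,t}_\sigma$ bound, and weighted Sobolev embedding upgrades this to a uniform $L^{r'}$ bound with $1/r'=1/t-2/n$, or to a uniform $L^\infty_\sigma$ bound when $1/t\le 2/n$. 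The resulting per-step gain
\begin{equation*}
\frac{1}{r} - \frac{1}{r'} \;\ge\; \min\!\left(\frac{2}{n}-\frac{q-2}{r},\ \frac{2}{n}-\frac{1}{p}\right)
\end{equation*}
is bounded below, uniformly in $q\in[q_0,N)$ and $r\ge M$, by the positive constant $\min\bigl(2/n-(N-2)/M,\ 2/n-1/p\bigr)$; the first argument is strictly positive precisely because $M>N$. After a uniformly bounded number of iterations, $u_q$ is therefore uniformly bounded in $L^\infty_\loc$. One final application of the isomorphism $\Delta:W^{2,p}_\sigma\to L^p_{\sigma-2}$, now with $f_q$ uniformly bounded in $L^p$ on its compact support, delivers the desired uniform $W^{2,p}_\sigma$ bound.

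The main obstacle is exactly this uniformity in $q$. Running the bootstrap from $L^N_\loc$, as in Lemma \ref{lem:subcriticalExistence}, yields a per-step gain of only $1-q/N$, which collapses to zero as $q\to N$; without further input one cannot bound the number of iterations needed to reach $L^\infty$. Lemma \ref{UniformSubcriticalBound} is tailor-made to remove this obstruction, supplying the fixed slack $M>N$ that guarantees a $q$-independent gain at every bootstrap step and thus keeps all constants uniform up to the critical exponent.
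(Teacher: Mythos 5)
Your proof is correct and follows essentially the same route as the paper's: both bootstrap from the uniform $L^M(K)$ bound of Lemma \ref{UniformSubcriticalBound}, exploit the slack $M>N$ to obtain a per-step integrability gain bounded below independently of $q$, and invoke the weighted isomorphism of \cite{Bartnik86} Proposition 2.2 at each stage. The only difference is bookkeeping: the paper majorizes $(1+u_q)^{q-1}$ by $1+|1+u_q|^{N-1}$ and runs the two source terms through separate iterations, whereas you retain the exponent $q-1$ and take the minimum of the two gains at each step.
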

\begin{proof}
Consider a subcritical minimizer $u_q$, which is a weak solution of
\begin{equation}
-a\Lap u_q = -R(1+u_q) +R'(1+u_q)^{q-1}.
\end{equation}
Let $K$ be a compact set containing the support of $R$ and $R'$,
and let $M>N$ be an exponent such that we have uniform bounds on $u_q$
in $L^M(K)$.  We wish to bootstrap this to better regularity for $u_q$.

Since the bootstrap for the two terms is different, we concentrate
first on the interesting term,  $R'(1+u_q)^{q-1}$, and suppose for
the moment that the other term is absent. Let us write
\begin{equation}
\frac 1 M = \frac{1}{N} - \epsilon
\end{equation}
for some $\epsilon>0$.  Now
\begin{equation}
|R'(1+u_q)^{q-1}| \le |R'| (1 + |1+u_q|^{N-1}).
\end{equation}
Since $R'$ is bounded, the term $R'|1+u_q|^{N-1}$
belongs to $L^{s}(K)$ with
\begin{equation}
\begin{aligned}
\frac{1}{s} &= \frac{1}{M}(N-1)\\
&= \left(\frac{1}{N}-\epsilon\right)(N-1) \\
&= \frac{2}{n} + \frac{1}{N} - \epsilon (N-1).
\end{aligned}
\end{equation}
Since $R'$ is zero outside of $K$
we conclude $R'(1+u_q)^{q-1}\in L^{s}_{\sigma}$.
Note that the norm of $R'(1+u_q)^{q-1}$ in $L^{s}_{\sigma}$ 
depends on  the norm of $u_q$ in $L^M(K)$ but is otherwise independent
of $q$.  Since the functions $u_q$ are uniformly bounded in $L^M(K)$,
we obtain control of $R'(1+u_q)^{q-1}$ in $L^{s}_{\sigma}$ independent of $q$.

If $s\le p$ then $s\in (1,p]$ and
we cite \cite{Bartnik86} Proposition 2.2 to conclude 
$u_q\in W^{2,s}_\sigma$
and therefore $u_q\in L^{M'}(K)$ with
\begin{equation}
\frac{1}{M'} = \frac{1}{s}-\frac{2}{n} = \frac{1}{N} - \epsilon (N-1).
\end{equation}
Similarly, after $k$ iterations of this process we would find $u_q$ belongs to
to $W^{2,s}_\sigma$ with
\begin{equation}
\frac{1}{s} = \frac{2}{n} + \frac{1}{N} - \epsilon (N-1)^k
\end{equation}
unless $s > p$, at which point the bootstrap terminates at 
$u_q\in W^{2,p}_{\sigma}$
with norm depending on $\|u_q\|_{L^M(K)}$ (which is independent of $q)$
and the number of iterations needed to reach $s\le p$.  
Note that since $N>2$, we will reach the condition 
$s\ge p$ in a finite number of steps independent of $q$.

Now consider the bootstrap for the term $-R(1+u_q)$ alone.  Write
\begin{equation}
\frac 1 p = \frac 2 n - \epsilon'
\end{equation}
for some $\epsilon'>0$. The term $-R(1+u_q)$ then belongs to $L^{t}(K)$ with
\begin{equation}
\frac{1}{t} = \frac{1}{p}+\frac{1}{M} = \frac{2}{n}-\epsilon' +\frac{1}{M}.
\end{equation}
Note that $1<t<p$ and hence \cite{Bartnik86} Proposition 2.2 implies
$u_q\in W^{2,t}_{\sigma}$.  Note that the norm of $u_q$ in
$W^{2,t}_{\sigma}$ depends on the norm of $u_q$ in $L^M(K)$ but is otherwise
independent of $q$.  Consequently
$u_q$ is controlled in $L^{M'}(K)$ independent of $q$ where
\begin{equation}
\frac{1}{M'} = \frac{1}{t}-\frac{2}{n} = \frac{1}{M} - \epsilon'.
\end{equation}
After $k$ iterations we would find instead
\begin{equation}
\frac{1}{M'} = \frac{1}{M} - k\epsilon'
\end{equation}
and the bootstrap stops in finitely many steps independent of $q$
when $k\epsilon'>1/M$, at which point 
we find that $u_q\in W^{2,p}_{\sigma}$, with norm
independent of $q$.  
There is an exceptional case if $k\epsilon'=1/M$,
but it can be avoided by an initial perturbation of $M$.

The bootstrap in the full case follows from combining these arguments.
\end{proof}

\begin{proof}[Proof of Theorem \ref{thm:PrescribedScalarCurvature}]
\textbf{(2. implies 1.)}\quad The $u_q$ are uniformly bounded in $W^{2,p}_{\sigma}$
by Corollary \ref{cor:UniformBound} for any $\sigma\in(2-n,0)$. 
Thus they converge to some $u$ strongly in
$W^{1,2}_{\delta^*}$ and uniformly on compact sets.
In particular, since the $u_q$ weakly solve \eqref{eq:almost-scalar-curvature}, 
$\phi:= u+1$ weakly solves
\begin{equation}
-a\Delta\phi + R\phi = R'\phi^{N-1}.
\end{equation}

Since each $u_q\geq -1$, $\phi\geq 0$, and since $\phi \to 1$
at infinity, $\phi\not\equiv 0$.  Hence the weak Harnack 
inequality \cite{trudinger-measurable} implies $\phi>0$. 

Since $\sigma\in(2-n,0)$ is arbitrary, $\phi-1\in W^{2,p}_{\tau}$ in particular.
Note that the rapid decay $\sigma\approx 2-n$ uses the fact that $R=0$ near infinity.
The lesser decay rate $\tau$ in the statement of the theorem
stems from the fact that we may have used a conformal factor in $W^{2,p}_\tau$
to initially set $R=0$ near infinity or to lower the scalar curvature after 
changing it to $R'$.

\textbf{(1. implies 2.)}\quad Let $Z=\{R'=0\}$. The case
where $Z$ has zero measure is trivial, for 
then $y(Z) = \infty>0$.  Hence we assume
$Z$ has positive measure and suppose there exists a conformally related metric $g'$ with
scalar curvature $R'$. Let $\delta>\delta^*$ be fixed and let $u$ be a
minimizer of $Q_{g',\delta}$ over $A(Z)$ as provided
by Proposition \ref{prop:eigenfunctions}. Note that
\begin{equation}
\int {R'} u^2 dV_{g'} = 0
\end{equation}
since $R'=0$ on $Z$ and $u=0$ on $Z^c$.  Hence
\begin{equation}
\lambda_{g',\delta}(Z) = Q_{g',\delta}(u) 
= a\frac{\int |\nabla u|^2_{g'}dV_{g'}}{\|u\|_{g',2,\delta}}.
\end{equation}
In particular, $\lambda_{g',\delta}(Z)\ge 0$, and $\lambda_{g',\delta}(Z)=0$
only if $u$ is constant. But $Z$ has positive measure, and therefore
$A(Z)$ does not contain any constants.  Hence $\lambda_{g',\delta}(Z)>0$,
and Proposition \ref{cor:MeasTFAE}  implies $Z$ is Yamabe positive.
\end{proof}

This completes the proof of Theorem \ref{thm:PrescribedScalarCurvature}. 
Turning to the compact case (Theorem \ref{thm:PrescribedCompact})
recall that we started the AE argument with the following inessential simplifying hypotheses:
\begin{enumerate}
\item The prescribed scalar curvature $R'$ is bounded.
\item The prescribed scalar curvature $R'$ has compact support.
\item The initial scalar curvature $R$ has compact support.
\end{enumerate}
The last two of these are trivial when $M$ is compact, and 
the first is justified by  Lemma \ref{lem:LowerScalarCurvatureCompact} below,
which shows that we can lower scalar curvature after first solving the problem 
for a scalar curvature that is truncated below. In the compact case we require
an additional inessential condition which 
will be used in Lemma \ref{lem:UniformLowerBoundForCompact}.
\begin{enumerate}
\setcounter{enumi}{3}
\item We may assume that the initial scalar curvature $R$ is continuous and negative.
Indeed, from Proposition \ref{prop:eigenfunctions} there is a positive function
$\phi$ solving $-a\Delta \phi + R\phi = \lambda_\delta(M) \phi$ on $M$.
Note that $\lambda_\delta(M)<0$ since $g$ is Yamabe negative.  Using
$\phi$ as the conformal factor we obtain a scalar curvature $\lambda_\delta(M) \phi^{2-N}$.
The hypotheses of Theorem 4.2 are conformally invariant and hence unaffected by this change.
\end{enumerate}
\begin{lem} \label{lem:LowerScalarCurvatureCompact}
Suppose $(M, g)$ is a $W^{2,p}$ compact Yamabe negative manifold.
Suppose $R'\in L^{p}$. If $0\geq R \geq R'$, then there exists a positive $\phi$ with
 $\phi \in W^{2,p}$ such that $g' = \phi^{N-2} g$ has scalar curvature $R'$.
\end{lem}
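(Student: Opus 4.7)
The plan is to adapt the argument of Lemma \ref{lem:LowerScalarCurvature} to a compact manifold. Exactly as there, $\phi_+ \equiv 1$ is a supersolution (since $R \ge R'$ implies $-a\Delta (1) + R\cdot 1 \ge R'\cdot 1^{N-1}$) and $\phi_- \equiv 0$ is trivially a subsolution, so the standard sub/super-solution method on a closed manifold (the compact analogue of \cite{Maxwell05b} Proposition~2) produces a weak solution $\phi \in W^{2,p}$ with $0 \le \phi \le 1$. Strict positivity of $\phi$ then follows by the weak Harnack inequality of \cite{trudinger-measurable} applied to $-a\Delta\phi + (R - R'\phi^{N-2})\phi = 0$, \emph{once} we know that $\phi \not\equiv 0$.

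The new difficulty is that, unlike in the AE case, there is no asymptotic condition $\phi \to 1$ at infinity to guarantee $\phi \not\equiv 0$. My plan is to exploit the Yamabe negativity of $g$: Proposition \ref{prop:eigenfunctions} applied with $V = M$ yields a positive eigenfunction $\psi \in W^{2,p}$ and $\lambda = \lambda_\delta(M) < 0$ satisfying $-a\Delta\psi + R\psi = \lambda\rho^{2(\delta^*-\delta)}\psi$. A short computation shows that $c\psi$ is a pointwise subsolution of $-a\Delta\phi + R\phi = R'\phi^{N-1}$ as soon as $c^{N-2}|R'|\psi^{N-2} \le |\lambda|\rho^{2(\delta^*-\delta)}$ pointwise; this is achievable for small enough $c$ whenever $R' \in L^\infty$, in which case the sub/super method run with $\phi_- = c\psi$ in place of $0$ immediately delivers $\phi \ge c\psi > 0$.

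For the general case $R' \in L^p$, I would approximate by bounded data: set $R'_k = \max(R', -k) \in L^\infty$, so that $|R'_k| \le |R'|$ and $R'_k \searrow R'$ pointwise. In the context in which this lemma is applied (to justify the truncation that lets us assume $R'$ is bounded in the proof of Theorem \ref{thm:PrescribedCompact}), the current scalar curvature $R$ is itself a previously-prescribed bounded function, so the inequality $R \ge R'_k$ persists for $k$ sufficiently large. The bounded case then supplies positive solutions $\phi_k$, and $\phi_k \le 1$ together with the uniform $L^p$ bound on $R'_k$ and elliptic regularity yield a uniform $W^{2,p}$ bound on $\phi_k$; a weakly convergent subsequence, also strongly convergent in $C^0$ since $p > n/2$, produces a limit $\phi \in W^{2,p}$ solving the desired equation.

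The hardest step will be ruling out $\phi \equiv 0$ in the limit, since the bounded-case subsolutions $c_k\psi$ have $c_k \to 0$ as $\|R'_k\|_\infty = k \to \infty$ and the obvious lower bound degenerates. I plan to address this by choosing each $\phi_k$ to be the maximal solution in $[0,1]$, so that monotonicity $\phi_{k+1} \le \phi_k$ is automatic, and then using a normalization argument on $u_k = \phi_k/\|\phi_k\|_\infty$ combined with a Harnack estimate at a point where $u_k$ attains its maximum to preclude $\|\phi_k\|_\infty \to 0$. Once $\phi \not\equiv 0$ is secured, the weak Harnack inequality returns $\phi > 0$, completing the proof.
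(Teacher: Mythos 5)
Your strategy is genuinely different from the paper's, and the difference matters. The paper never uses the trivial subsolution $0$ and therefore never has to rule out $\phi\equiv 0$: it constructs a strictly positive subsolution directly, for arbitrary $R'\in L^p$. Since the manifold is Yamabe negative, $R\not\equiv 0$, so $-a\Delta-R$ (with $-R\ge 0$, $-R\not\equiv 0$) is invertible and one can solve $-a\Delta\phi_\epsilon-R\phi_\epsilon=-R+\epsilon R'$; for small $\epsilon$ one has $\phi_\epsilon>1/2$, and then $\phi_-=\eta\phi_\epsilon$ satisfies $-a\Delta\phi_-+R\phi_-=\eta R(2\phi_\epsilon-1)+\eta\epsilon R'\le\eta\epsilon R'$ (using $R\le 0$), which is $\le R'\phi_-^{N-1}$ once $\eta^{2-N}\ge\phi_\epsilon^{N-1}/\epsilon$. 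The crucial feature is that $R'$ cancels from both sides of the subsolution inequality, so the construction is insensitive to the size of $R'$ and handles unbounded data in one step; the sub/supersolution method then returns $\phi\ge\phi_->0$ with no separate non-triviality argument. Your eigenfunction subsolution $c\psi$ is a correct and rather elegant alternative in the bounded case (it uses Yamabe negativity only through $\lambda_\delta(M)<0$ rather than through $R\le 0$), but it degenerates exactly where the paper's subsolution does not, which is what forces you into the truncation-and-limit scheme.

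Two genuine gaps remain in that scheme. First, the truncation needs $R\ge R'_k=\max(R',-k)$, i.e.\ $R$ bounded below, which is not among the hypotheses ($R$ is only assumed to lie in $L^p$); appealing to ``the context in which the lemma is applied'' proves a weaker statement than the one asserted. Second, and more seriously, the non-triviality of the limit is only a plan: the asserted monotonicity of the maximal solutions is not actually automatic (the maximal solution for $R'_{k+1}$ in $[0,1]$ need not lie below $\phi_k$ without an argument comparing it to the supersolution $\phi_k$), and the normalization/Harnack step is not carried out. It can be completed — if $\|\phi_k\|_\infty\to 0$ along a subsequence, then $u_k=\phi_k/\|\phi_k\|_\infty$ solves $-a\Delta u_k+(R-R'_k\phi_k^{N-2})u_k=0$ with $\|R'_k\phi_k^{N-2}\|_p\le\|R'\|_p\,\|\phi_k\|_\infty^{N-2}\to 0$, so elliptic estimates and compactness yield a nonnegative nontrivial solution of $-a\Delta u+Ru=0$, whence $\lambda_\delta(M)=0$, contradicting Yamabe negativity — but as written this key step is missing. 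Since the paper's subsolution disposes of both issues in a few lines, you should either adopt it or supply the limiting argument and state the bounded-below hypothesis honestly.
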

\begin{proof}
We wish to solve
\begin{equation}\label{eq:sclower}
-a \Delta \phi + R\phi = R'\phi^{N-1}.
\end{equation}
Note that $\phi_+=1$ is a supersolution of equation \eqref{eq:sclower}.
To find a subsolution first observe that $R\not \equiv 0$ since the manifold is Yamabe negative.
So, since $-R\ge 0$ and $-R\not\equiv 0$, 
for each $\epsilon>0$ there exists a unique $\phi_\epsilon\in W^{2,p}$ solving
\begin{equation}\label{eq:sceps}
-a\Delta \phi_\epsilon - R\phi_\epsilon  = -R + \epsilon R'.
\end{equation}
When $\epsilon=0$ the solution is $1$, and since $W^{2,p}$ embeds continuously
in $C^0$ we can fix $\epsilon>0$ such that $\phi_{\epsilon}>1/2$
everywhere. We claim that $\phi_-:=\eta \phi_\epsilon$ is a subsolution
if $\eta>0$ is sufficiently small.  Indeed,
\begin{equation}\begin{aligned}
-a\Delta \phi_- + R\phi_- &= \eta\left[ R(2\phi_\epsilon-1) \right] + \eta\epsilon R'\\
& \le \eta\epsilon R'.
\end{aligned}\end{equation}
So $\phi_-$ is a subsolution so long as 
\begin{equation}\label{eq:subsol2}
\eta\epsilon R' \le R'\phi_-^{N-1}
\end{equation}
A quick computation shows that inequality \eqref{eq:subsol2} holds
if $\eta$ is small enough so that $\eta^{2-N}\ge \phi_\epsilon^{N-1}/\epsilon$
everywhere. We can also take $\eta$ small enough so that $\phi_-\le \phi_+=1$, and hence
there exists a solution $\phi\in W^{2,p}$ with $\phi\ge\phi_->0$ 
of equation \eqref{eq:sclower}  (\cite{Maxwell05b} Proposition 2).
\end{proof}

The remainder of the proof of Theorem \ref{thm:PrescribedCompact}
nearly exactly follows the proof of Theorem \ref{thm:PrescribedScalarCurvature}
by treating a
compact manifold as an asymptotically Euclidean manifold with zero ends.
In particular, the cited results of Section 3 apply equally in both cases,
and differences arise only when the following facts are cited.
\begin{itemize}
\item A constant function in $W^{1,2}_{\delta^*}$ is identically zero.
\item The Laplacian is an isomorphism from $W^{2,p}_\sigma$ to $L^p_\sigma$
for $\sigma\in (2-n,0)$.
\end{itemize}
We use the property that constants in $W^{1,2}_{\delta^*}$ vanish just twice:
once in Lemma \ref{lem:subcriticalExistence} in showing $1+u_q\not\equiv 0$,
and once in the final proof of Theorem \ref{thm:PrescribedScalarCurvature}
showing that in the limit $1+u\not\equiv 0$ as well.  The following
lemma provides the alternative argument needed to ensures these
functions do not vanish identically in the compact case.

\begin{lem}\label{lem:UniformLowerBoundForCompact}
Suppose $(M,g)$ is compact and that $R_g$
is continuous and negative. Fix $q_0\in (2,N)$. Then $\|1+u_q\|_2\geq C$ for some $C$
independent of $q\in (q_0,N)$. Moreover, the limit $1+u$ is not identically zero.
\end{lem}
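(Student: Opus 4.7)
The plan is to exploit a feature that distinguishes the compact case from the AE setting: on a compact manifold, constants lie in $W^{1,2}(M)$ and are therefore admissible trial functions in the minimization defining $u_q$. Testing $F_q$ against the one-parameter family $u = t - 1$ for $t > 0$ will produce an upper bound on $F_q(u_q)$ that is strictly negative and uniform in $q$, and the desired $L^2$ lower bound on $1 + u_q$ will follow by extracting the mass term from this energy bound.

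First I would verify that $\int_M |R'| > 0$: if instead $R' \equiv 0$, then $\{R'=0\} = M$ would be Yamabe positive by hypothesis, contradicting the Yamabe-negative assumption on $g$. Writing $\alpha = \int_M R < 0$ (by continuity and negativity of $R$ on compact $M$) and $\gamma = \int_M |R'| > 0$, the constant competitor gives
\begin{equation*}
F_q(t-1) \;=\; \alpha t^2 \;+\; \frac{2\gamma}{q} t^q,
\end{equation*}
which elementary calculus shows is minimized at $t_* = (|\alpha|/\gamma)^{1/(q-2)}$ with value $F_q(t_*-1) = -|\alpha|(1 - 2/q)\,t_*^2$. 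Since $q \mapsto t_*$ is continuous on $[q_0, N]$ it stays in a compact subinterval of $(0, \infty)$, and $1 - 2/q \geq 1 - 2/q_0 > 0$ throughout, so there is a constant $c_0 > 0$ independent of $q$ with $F_q(u_q) \leq F_q(t_*-1) \leq -c_0$.

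Setting $\phi_q := 1 + u_q \geq 0$ and using $-\tfrac{2}{q}\int R' \phi_q^q = \tfrac{2}{q}\int |R'|\phi_q^q \geq 0$, the above reduces to $\int a|\nabla \phi_q|^2 + R\phi_q^2 \leq -c_0$; discarding the nonnegative gradient contribution and using $-R \leq \|R\|_\infty$ (by boundedness of $R$ on compact $M$) yields $\|R\|_\infty\, \|\phi_q\|_2^2 \geq c_0$, which is the uniform $L^2$ lower bound $\|1+u_q\|_2 \geq (c_0/\|R\|_\infty)^{1/2}$. For the final assertion, Corollary \ref{cor:UniformBound} furnishes uniform $W^{2,p}$ bounds on $\{u_q\}$, so the compact embedding $W^{2,p} \hookrightarrow L^2$ gives strong $L^2$ convergence to $u$ along a subsequence, and the lower bound passes to the limit: $\|1+u\|_2 > 0$, so in particular $1+u \not\equiv 0$. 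The only slightly subtle point is the uniformity of $c_0$ in $q$, but this dissolves quickly once one notes that $t_*^{q-2} = |\alpha|/\gamma$ is a constant in $q$, making $q \mapsto t_*$ continuous on $[q_0, N]$.
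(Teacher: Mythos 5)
Your proof is correct and follows essentially the same strategy as the paper's: test $F_q$ against a constant competitor to obtain a uniform negative upper bound $F_q(u_q)\le -c_0$, then extract the $L^2$ lower bound from the $\int R(1+u_q)^2$ term using that $R$ is continuous, negative, and hence bounded. The only cosmetic difference is that you optimize the constant explicitly at $t_*=(|\alpha|/\gamma)^{1/(q-2)}$, whereas the paper simply takes a constant sufficiently close to $-1$.
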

\begin{proof}
Note that for any constant $k$,
\begin{equation}
F_q(k) = (1+k)^2\int R - \frac{2}{q} (1+k)^{q}\int R'.
\end{equation} 
Since $\int R<0$, for any $k\neq -1$ close enough to $-1$, $F_q(k)<0$.
Indeed, there are constants $k_0>-1$ and $c>0$ 
such that $F_q(k_0)<-c$ for all $q\in (q_0,N)$.  But then
\begin{equation}
\int R (1+u_q)^2 \le F_q(u_q) \le F_q(k_0) \le -c
\end{equation}
since $u_q$ minimizes $F_q$. Since $R$ is continuous,
and thus bounded below, $\|1+u_q\|_2\geq C$ for some $C$ independent of $q\in(q_0,N)$.
Since $u_q \to u$ in $L^2$, we also have $\|1+u\|_2 \geq C$, and so $1+u$ is not identically
zero.  
\end{proof}

We use the fact that $\|\Delta u\|_{p,\sigma}$ controls $\|u\|_{W^{2,p}_{\sigma}}$
just twice as well, once in the bootstrap of Lemma \ref{lem:subcriticalExistence}
and once in the bootstrap of Lemma \ref{cor:UniformBound}.  However,
on a compact manifold, $\|u\|_{W^{2,p}}$ is controlled by the sum
of $\|\Delta u\|_{p}$ and $\|u\|_2$, and the coercivity estimate
from Proposition \eqref{prop:Coercivity} ensures that $\|u_q\|_2$
is uniformly bounded as $q\ra N$. This provides the needed extra
control for the bootstraps and 
completes the proof of Theorem \ref{thm:PrescribedCompact}.

\section{Yamabe Classification}\label{sec:YamabeClassification}

In this section we provide two characterizations of the Yamabe class of an
asymptotically Euclidean manifold, one in terms of the prescribed scalar curvature
problem and one in terms of the Yamabe type of the manifold's compactification.
Note that throughout this section AE manifolds have at least one end.

\begin{thm}\label{YamabeClassification}
Suppose $(M,g)$ is a $W^{2,p}_\tau$ AE manifold with $p>n/2$ and $\tau \in (2-n,0)$.
Let $\mathcal{R}_{\le 0}$ be the set of non-positive elements of $L^p_{\tau-2}$.
\begin{enumerate}
\item $M$ is Yamabe positive if and only if 
the set of non-positive scalar curvatures of metrics conformally equivalent
to $g$ is $\mathcal{R}_{\le 0}$.
\item $M$ is Yamabe null if and only if 
the set of non-positive scalar curvatures of metrics conformally equivalent
to $g$ is $\mathcal{R}_{\le 0}\setminus\{0\}$.
\item $M$ is Yamabe negative if and only if 
the set of non-positive scalar curvatures of metrics conformally equivalent
to $g$ is a strict subset of $\mathcal{R}_{\le 0}\setminus\{0\}$.
\end{enumerate}
\end{thm}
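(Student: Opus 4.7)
The plan is to establish the three ``forward'' implications (Yamabe class $\Rightarrow$ attainability set); the converses will then follow by elimination, since the three Yamabe classes partition the set of AE metrics (by the sign of $y(M)$) and the three candidate attainability sets in (1)--(3) are pairwise distinct: set (1) contains $R'\equiv 0$, set (2) is exactly $\mathcal{R}_{\le 0}\setminus\{0\}$, and set (3) is a strict subset of the latter.

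For (1), assume $y(M)>0$. For any $R'\in\mathcal{R}_{\le 0}$, the inclusion $A(\{R'=0\})\subseteq A(M)$ immediately gives $y(\{R'=0\})\ge y(M)>0$, and Theorem~\ref{thm:PrescribedScalarCurvature} produces the desired conformal factor. For (2), assume $y(M)=0$, so $\lambda_\delta(M)=0$ by Corollary~\ref{cor:SignEquivalence}. Given $R'\in\mathcal{R}_{\le 0}\setminus\{0\}$, the set $E=\{R'<0\}$ has positive measure, so the strict-monotonicity Lemma~\ref{InnerApproxNull} applied with $\Omega=M$ yields $\lambda_\delta(\{R'=0\})>0$; hence $\{R'=0\}$ is Yamabe positive and $R'$ is attainable by Theorem~\ref{thm:PrescribedScalarCurvature}. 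Conversely $R'\equiv 0$ is not attainable, since otherwise Theorem~\ref{thm:PrescribedScalarCurvature} would force $\{R'=0\}=M$ to be Yamabe positive.

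The main work is (3). Assuming $y(M)<0$, I must exhibit a non-trivial $R'\le 0$ that is \emph{not} attainable. Pick $u\in W^{1,2}_{\delta^*}$ with $Q^y(u)<0$. A cutoff argument mirroring the proof of Lemma~\ref{InnerApproxNeg} -- multiplying by a smooth radial function $\chi_R$ equal to $1$ on a coordinate ball of radius $R$ and supported in the ball of radius $2R$ -- produces $\chi_R u\to u$ in $W^{1,2}_{\delta^*}$ as $R\to\infty$; since $Q^y$ is norm-continuous on $W^{1,2}_{\delta^*}\setminus\{0\}$ (numerator by Lemma~\ref{lem:RIntegralBound}, denominator by the Sobolev embedding $W^{1,2}_{\delta^*}\hookrightarrow L^N$), for $R$ sufficiently large $v:=\chi_R u$ has compact support $K$ and still satisfies $Q^y(v)<0$. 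Because $M$ is non-compact, $M\setminus K$ contains a nonempty open set $U$; take any non-trivial non-positive $R'\in L^p_{\tau-2}$ (say a smooth bump) supported in $U$. Then $K\subseteq\{R'=0\}$, so $v\in A(\{R'=0\})$ and $y(\{R'=0\})\le Q^y(v)<0$. Theorem~\ref{thm:PrescribedScalarCurvature} then precludes attainability of $R'$, so the attainable subset of $\mathcal{R}_{\le 0}\setminus\{0\}$ is strict.

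The principal technical point is the cutoff step in (3): one needs to preserve the strict negativity of $Q^y$ when replacing a general $W^{1,2}_{\delta^*}$ test function by one of compact support. This is essentially already established in Lemma~\ref{InnerApproxNeg}, so no new analysis is required. Everything else consists of direct applications of Theorem~\ref{thm:PrescribedScalarCurvature} together with the set-monotonicity and strict-monotonicity results of Section~\ref{sec:FirstEigenvalue}.
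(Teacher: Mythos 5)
Your proposal is correct, and parts (1) and (2) follow the paper's argument essentially verbatim (monotonicity of $y$ under inclusion of test-function spaces for (1); Lemma \ref{InnerApproxNull} with $\Omega=M$ for (2); the reduction to the forward implications by mutual exclusivity of the three conclusions is also the paper's logic). The difference is in part (3). The paper invokes Lemma \ref{InnerApproxNeg} directly: continuity from below shows that $M\setminus B_r(x_0)$ remains Yamabe negative for small $r$, and one then supports $R'$ in the small ball $B_r(x_0)$. You instead truncate a test function $u$ with $Q^y(u)<0$ to compact support $K$ via a cutoff at infinity and support $R'$ in the open set $M\setminus K$; this is legitimate (the tail estimate $R^{-2}\int_{A_R}u^2\to 0$ follows from $u\in L^2_{\delta^*}$ or from $u\in L^N$ plus H\"older, exactly as in Lemma \ref{InnerApproxNeg}, and $Q^y$ is indeed norm-continuous away from $0$). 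Your route has the merit of bypassing Proposition \ref{prop:eigenfunctions} (no minimizer is needed, only density of compactly supported functions), at the cost of producing an unattainable $R'$ whose negative part sits near infinity rather than in an arbitrarily placed small ball; the paper in fact records your variant as a remark after the theorem (``the complement of a sufficiently `small' neighborhood of infinity is Yamabe negative''). One small point to make explicit in (3): to conclude that the attainable set is a strict subset \emph{of} $\mathcal{R}_{\le 0}\setminus\{0\}$ you should also note that $R'\equiv 0$ is unattainable, which is immediate from Theorem \ref{thm:PrescribedScalarCurvature} since $M$ is not Yamabe positive.
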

\begin{proof}  It suffices to prove the forward implications.

\textbf{1)}\; Suppose $M$ is Yamabe positive, and hence so is every subset.
If $R'\in \mathcal{R}_{\le 0}$, then $\{R'=0\}$ is
Yamabe positive and Theorem \ref{thm:PrescribedScalarCurvature}.
implies $[g]$ includes a metric with scalar curvature $R'$.

\textbf{2)}\; Suppose $M$ is Yamabe null. Since $M$ is open and connected, 
Lemma \ref{InnerApproxNull} implies that if $E\subseteq M$ has
positive measure, then $M\setminus E$ is Yamabe positive.  Hence
for any $R'\in \mathcal{R}_{\le 0}$  with $R'<0$ on a set of positive measure,
$\{R'=0\}$ is Yamabe positive, and Theorem \ref{thm:PrescribedScalarCurvature} 
implies we can conformally transform to a metric with scalar curvature $R'$.
But $R'\equiv 0$ is impossible, for otherwise Theorem \ref{thm:PrescribedScalarCurvature}
would imply $M$ is Yamabe positive.

\textbf{3)}\; Suppose $M$ is Yamabe negative.  Since $M$ is open, Lemma \ref{InnerApproxNeg} 
shows that there is a nonempty open set $W\subseteq M$ such that $M\setminus W$ is 
also Yamabe negative.  Suppose $R'\in L^{p}_{\tau-2}$ is non-positive and supported
in $W$. Then $\{R'=0\}$ contains $M\setminus W$ and is hence Yamabe negative.
But then Theorem \ref{thm:PrescribedScalarCurvature} shows that 
we cannot conformally transform to 
a metric with scalar curvature $R'$. In particular, $R'\equiv 0$ is one of the
unattainable scalar curvatures.
\end{proof}

While Theorem \ref{YamabeClassification} completely the describes
the set of allowable scalar curvatures in cases 1) and 2), 
it does not in case 3).  Of course, we already have demonstrated a necessary and sufficient
criterion for being able to make the conformal change:  
the zero set of $R'$ must be Yamabe positive. Nevertheless, it would be desirable 
to describe this situation more concretely, and there are a
few things that can be said. First, by Lemma \ref{lem:UniformOuterApproximation},
if $R' \in \mathcal{R}_{\le 0}$ and the weighted volume of $\{R'=0\}$ is sufficiently small,
then $\{R'=0\}$ is Yamabe positive, and
thus $g$ is conformally equivalent to a metric with scalar curvature $R'$. 
In particular, if $R'<0$ everywhere, then it is attainable.  Conversely,
by Lemma \ref{InnerApproxNeg},
for any sequence $\{R'_k\} \subset \mathcal{R}_{\le 0}$ such that 
$\{R'_k<0\} \subset B_{1/k}(x_0)$ for some fixed $x_0 \in M$, then for $k$ large enough,
$\{R'_k=0\}$ is Yamabe negative, and thus
$g$ is not conformally equivalent to a metric with scalar curvature $R'_k$.
That is, the strictly negative part of $R'$ cannot be constrained to 
a small ball. Similarly, an argument analogous to the proof of Lemma \ref{InnerApproxNeg} shows that
the complement of a sufficiently ``small'' neighborhood of infinity
is Yamabe negative, and hence the strictly negative part of $R'$ cannot be
constrained to a small neighborhood of infinity.

Our second characterization of the Yamabe class of an AE manifold involves
its compactification.
An AE manifold can be compactified using a conformal factor that decays suitably 
at infinity, and a compact manifold can be transformed into an AE manifold using
a conformal factor with a suitably singularity. We would like to show that
the sign of the Yamabe invariant is preserved under these operations,
and we begin by laying out the details of the compactification/decompactification
procedure.  In particular, there is a precise relationship between the decay of the
metric at infinity and its smoothness at the point of compactification.

\begin{lem}\label{lem:compactification}
Let $p>n/2$ and let $\tau=\frac{n}{p}-2$, so $-2<\tau<0$. 
Suppose $(M,g)$ is a $W^{2,p}_\tau$
AE manifold.
There is a smooth conformal factor $\phi$ that decays to infinity
at the rate $\rho^{2-n}$ 
such that $\bar g= \phi^{N-2} g$ extends to a $W^{2,p}$ metric on
the compactification $\Mbar$ .  

Conversely, suppose
$(\Mbar,\gbar)$ is a compact $W^{2,p}$ manifold, with $p>n/2$ and $p\neq n$.
Given a finite set $\mathcal{P}$
of points in $\Mbar$ there is conformal factor $\phibar$
that is smooth on $M=\Mbar\setminus\mathcal{P}$, has a 
singularity of order $|x|^{2-n}$ at each point of $\mathcal{P}$,
and such that $g=\phibar^{N-2}\gbar$ is a $W^{2,p}_{\tau}$
AE manifold with $\tau = \frac n p -2$.
\end{lem}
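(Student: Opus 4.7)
The plan is to use Euclidean inversion as the model compactification. On each end I would work in the distinguished coordinates, with $\rho(x) = |x|$, and take $\phi$ to be a smooth positive function on $M$ equal to $\rho^{2-n}$ outside a large compact set and extended smoothly and positively across the interior. Two elementary identities drive the construction: $(|x|^{2-n})^{N-2} = |x|^{-4}$, and the Euclidean inversion $x = y/|y|^2$ pulls back the Euclidean metric $e_x$ to $|y|^{-4} e_y$ on a punctured ball near $y = 0$. Together these imply that $\phi^{N-2} \hat g$, read in $y$-coordinates, extends smoothly and nondegenerately to $y = 0$, so I would define $\Mbar$ by adjoining a single point $p_i$ at each end, with the $y$-chart serving as local coordinates at $p_i$.

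The main task is then to verify that $\overline h := \phi^{N-2}(g - \hat g)$, expressed in $y$-coordinates near $y = 0$, lies in $W^{2,p}$. The tensor transformation law, together with the identity $P P^T = I$ for $P(\hat y) := I - 2\hat y\hat y^T$, gives the pointwise bound $|\overline h(y)|_e \lesssim |h(x(y))|_e$, and a Leibniz expansion controls $|\partial^j \overline h(y)|$ by $\sum_{k \le j} |y|^{-j-k}|\partial^k h|(x(y))$ for $j = 1, 2$. Combined with the volume Jacobian $dy = |x|^{-2n}\,dx$, the $L^p$ norm of each $\partial^j \overline h$ on $\{|y| < r\}$ translates into the weighted norm $\|\partial^k h\|_{p,\tau-k}$ on the end precisely when $\tau \le n/p - 2$. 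The case $j = 2$, $k = 2$ is sharp, and forces $\tau = n/p - 2$, the value in the hypothesis. Verifying this endpoint and tracking the full Leibniz expansion at second order is the main calculation.

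For the converse, I would choose local coordinates around each $p_i \in \Mbar$ realizing a neighborhood of $p_i$ as a ball in $\R^n$; the Sobolev embedding $W^{2,p} \hookrightarrow C^{0,\alpha}$ with $\alpha = 2 - n/p > 0$, which requires $p > n/2$ and $p \ne n$, is what ensures $\gbar$ has continuous components in such a chart. Then define $\phibar$ locally to equal $|x|^{2-n}$ in each chart, glued via a partition of unity into a smooth positive function on $\Mbar \setminus \mathcal{P}$. Setting $g = \phibar^{N-2} \gbar$ and introducing $y = x/|x|^2$ outside each chart produces an AE structure on $M := \Mbar \setminus \mathcal{P}$, and the change-of-variables computation run in reverse yields $g - \hat g \in W^{2,p}_\tau$ with $\tau = n/p - 2$. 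The principal obstacle throughout is the borderline nature of this decay rate: the $j = 2$ estimate admits no slack, so each term of the Leibniz expansion must be tracked against the exact power of $|y|$ it produces, and it is precisely the coincidence of exponents at $\tau = n/p - 2$ that makes the lemma work.
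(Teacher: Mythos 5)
Your forward direction is essentially the paper's argument: Kelvin transform, $\phi=\rho^{2-n}$, the Leibniz/chain-rule expansion of $\pbar^j\kbar$ in terms of $\p^k k$ with the powers $|y|^{-j-k}$, and the volume Jacobian $|z|^{-2n}$, with the exponent bookkeeping closing exactly at $\tau=n/p-2$. That half is correct.

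The converse, however, has a genuine gap: the change-of-variables computation does \emph{not} simply ``run in reverse.'' Reversing the Leibniz expansion, the $L^p_{\tau-2}$ norm of $\p^2k$ is controlled by
\begin{equation*}
\int \left(O(\pbar^2\kbar)\right)^p+\left(O(\pbar\kbar)\,O(|x|^{-1})\right)^p+\left(O(\kbar)\,O(|x|^{-2})\right)^p\,d\Vbar ,
\end{equation*}
and the second and third terms are \emph{not} finite for a generic $W^{2,p}$ tensor $\kbar$: you need the weighted Hardy-type inequalities
$\int_B |\kbar|^p|x|^{-2p}\,d\Vbar\le C\int_B|\pbar^2\kbar|^p\,d\Vbar$ and
$\int_B|\pbar\kbar|^p|x|^{-p}\,d\Vbar\le C\int_B|\pbar^2\kbar|^p\,d\Vbar$.
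These require (i) choosing coordinates at each $P\in\mathcal P$ in which $\kbar(P)=0$ (and, when $p>n$, normal coordinates so that $\pbar\kbar(P)=0$ as well), since otherwise $\int|\kbar|^p|x|^{-2p}$ diverges for every $p\ge n/2$; (ii) a density/Fatou argument to pass from smooth tensors vanishing near $P$ to $\kbar$; and (iii) the hypothesis $p\ne n$, which is precisely the non-exceptional weight condition in the weighted Poincar\'e inequality (\cite{Bartnik86} Theorem 1.3) used to prove these Hardy bounds. Your proposal instead attributes $p\ne n$ to the Sobolev embedding $W^{2,p}\hookrightarrow C^{0,\alpha}$, which is not where it is needed (continuity of $\gbar$ holds for all $p>n/2$, including $p=n$). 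Supplying the coordinate normalization at the punctures and the Hardy inequalities is the substantive content of the converse; without them the lower-order terms of the expansion are uncontrolled.
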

\begin{proof} For simplicity we treat the case of only one end.

Let $(M,g)$ be a $W^{2,p}_{\tau}$ AE manifold and let $z^i$ be the Euclidean
end coordinates on $M$, so
\begin{equation}\label{eq:gDecomposition}
g_{ij} = e_{ij} +k_{ij},
\end{equation} with $k\in W^{2,p}_{\tau}$. Let $x^i$ be coordinates given
by the Kelvin transform $x^i = z^i/|z|^{2}$, so $z^i = x^i/|x|^{2}$ as well.

We define a conformal factor $\phi = |z|^{2-n}$ near infinity, and extend 
it to be smooth on the rest of $M$.
Let $\gbar = \phi^{N-2} g$ and let $\Mbar$ be the one-point compactification of $M$,
with $P$ being the point at infinity.
We wish to show that $\gbar$ extends to a $W^{2,p}(\Mbar)$ metric.

Near $P$, $\phi^{N-2} = |z|^{-4}$ and
\begin{align}\label{eq:gbarDecomposition}
\gbar_{ij} &= e_{ij} + \kbar_{ij}
\end{align}
where
\begin{equation}
\kbar_{ij} :=  k_{ij} - \frac{4}{|x|^2} x^a k_{a(i} x_{j)}
 + \frac{4}{|x|^4} x^a x^b k_{ab} x_i x_j = O(k).
\end{equation} 
and $x_a = e_{ab} x^b$. Since $\kbar_{ij}\ra 0$ at $P$,
we set $\gbar_{ij}(P)=e_{ij}$ to obtain a continuous metric, and we 
need to show that $\kbar\in W^{2,p}(\Mbar)$.  
Since $\kbar \in W^{2,p}_\loc(M)$, and since a point is a 
removable set, we need only show that 
the second derivatives of $\kbar$ belong to $L^p(B)$ for 
some coordinate ball $B$ containing $P$.

Let $\pbar$ represent the derivatives in $x^i$ coordinates. Since 
$\frac{\p z}{\p x} = O(|x|^{-2})$, we calculate
\begin{equation} \label{eq:ChainRule} \begin{aligned}
\pbar \kbar &= O(\p k)O(|z|^2) + O(k)O(|z|)\\
\pbar^2 \kbar &= O(\p^2k) O(|z|^4) + O(\p k) O(|z|^3) + O(k)O(|z|^{2}).
\end{aligned}\end{equation}
In order to show $\pbar^2 \kbar \in L^p(B)$, it is sufficient to show that each
of the three terms in equation \eqref{eq:ChainRule} is in $L^p(B)$.

Note that near infinity
\begin{equation}
d\Vbar  = \phi^N dV = |z|^{-2n} dV.
\end{equation} Hence the $L^p$ norm of the
$O(k)O(|z|^2)$ term of equation \eqref{eq:ChainRule} is
controlled by
\begin{equation}\begin{aligned}
\int \left(O(k)O(|z|^2)\right)^p |z|^{-2n}dV 
&= \int O\left(|k|^p\right) O\left(|z|^{2p-2n}\right) dV\\
&\leq C\|k\|_{W^{2,p}_{\tau}}^p,
\end{aligned}\end{equation}
where we have used the equality
\begin{equation}
2p-2n = -n- \tau p
\end{equation}
and equation \eqref{eq:weighted-norm} defining the weighted norm.
Hence the $O(k)O(|z|^2)$ term of equation \eqref{eq:ChainRule}
belongs to $L^p(B)$. The two remaining terms have the same asymptotics and 
similar calculations show that they belong to $L^p(B)$ as well.

For the converse, consider a $W^{2,p}$ compact manifold $(\Mbar, \gbar)$
with $p>n/2$ and $p\neq n$. Let $P$ be a point to remove to obtain 
$M=\Mbar\setminus\{P\}$.  Since $\gbar$ is continuous we can find smooth 
coordinates $x^i$ near $P$ such that $\gbar = e+\kbar$ for some $\kbar \in W^{2,p}$
which vanishes at $P$.
Moreover, if $p>n$ then $\gbar$ has H\"older continuous derivatives and the proof
of \cite{Aubin98} Proposition 1.25 shows we can additionally assume
these are normal coordinates (i.e., the first derivatives of $\kbar$ vanish at $P$).
Finally, since the result we seek only involves properties of $\kbar$ local to $P$, we can 
assume that $\kbar=0$ except in a small coordinate ball $B$ near $P$.

We claim there is a constant $C$ such that
\begin{align}
\label{eq:FiniteKbar}
\int_{B} \frac{|\kbar|^p}{|x|^{2p}} &\le C\int_{B} |\pbar^2 \kbar|^p d\Vbar\qquad\text{and}\\
\label{eq:FiniteKbar2}
\int_{B}  \frac{|\pbar \kbar|^p}{|x|^p}d\Vbar &\leq C \int_{B} |\pbar^2 \kbar|^p d\Vbar.
\end{align}
Assuming for the moment that this claim is true,
let $z_i = x_i/|x|^2$. Let $\phibar = |x|^{2-n}$ near $P$ and extend 
$\phibar$ as a positive smooth function on the remainder of $M$. 
Let $g= \phibar^{N-2} \gbar$.
Near $P$, $\phibar^{N-2} = |x|^{-4}$ and so $g= e+k$ near infinity, where
\begin{equation}
k_{ij} := \kbar_{ij} - \frac{4}{|z|^2} z^a \kbar_{a(i} z_{j)} 
+ \frac{4}{|z|^4} z^a z^b \kbar_{ab} z_i z_j = O(\kbar).
\end{equation} 
Since $k\in W^{2,p}_{\loc}$, we need only establish the desired
asymptotics at infinity. 

A computation similar to the one leading to equation \eqref{eq:ChainRule} shows
\begin{equation}\begin{aligned}
\p k &= O(\pbar \kbar)O(|x|^2) + O(\kbar)O(|x|)\\
\p^2 k &= O(\pbar^2 \kbar) O(|x|^4) + O(\pbar \kbar) O(|x|^3) + O(\kbar)O(|x|^{2}).
\end{aligned}\end{equation}
Also, $d\Vbar = |z|^{-2n} dV$ near $P$. Hence
\begin{align}
\int |\p^2 k|^p |z|^{4p-2n} dV &= \int |\p^2 k|^p |x|^{-4p}|x|^{2n} dV \\
&=\int \left(O(\pbar^2 \kbar)\right)^p +\left(O(\pbar\kbar)O(|x|^{-1})\right)^p
+ \left(O(\kbar)O(|x|^{-2})\right)^p d\Vbar.\label{eq:Decompactification1}
\end{align} From inequalities \eqref{eq:FiniteKbar} and \eqref{eq:FiniteKbar2},
quantity \eqref{eq:Decompactification1} is finite.  Noting
 \begin{equation}
4p-2n = -n -\tau p + 2p
 \end{equation}
we conclude $|\p^2 k|\in L^{p}_{\tau-2}$, as desired. A similar calculation shows that
$|\p k| \in L^{p}_{\tau-1}$ and $|k| \in L^p_{\tau}$.  This concludes the proof,
up to establishing inequalities \eqref{eq:FiniteKbar} and \eqref{eq:FiniteKbar2}.

Theorem 1.3 of \cite{Bartnik86}
implies that
\begin{align}\label{eq:HardysInequality}
\int_{B} \frac{|f|^p}{|x|^{2p}} d\Vbar \leq c \int_{B} \frac{|\pbar f|^p}{|x|^p}d\Vbar
        \leq C \int_{B} |\pbar^2 f|^p d\Vbar <\infty
\end{align} for smooth functions $f$ that are compactly supported in $B$
and vanish in a neighborhood of $P$.
This inequality relies on the fact that $p\neq n$, which corresponds to the
condition $\delta=0$ in \cite{Bartnik86} Theorem 1.3.

Let $f_n$ be a sequence of smooth functions vanishing near $P$ that
converges to $\kbar$ in $W^{2,p}$; such a sequence exists since $\kbar=0$
at $P$, since $\partial \kbar =0$ at $P$ if $p>n$, and since we have assumed
that $\kbar$ vanishes outside of $B$.
By reduction to a subsequence we may assume that the values and first derivatives
of sequence converge pointwise a.e., and using Fatou's Lemma we find
\begin{equation}
\begin{aligned}
\int_{B} \frac{|\kbar|^p}{|x|^{2p}} &\leq \liminf_{n\to \infty} \int_{B} \frac{|f_n|^p}{|x|^{2p}}\\
      &\leq C \lim_{n\to\infty} \int_{B} |\pbar^2 f_n|^p d\Vbar \\
      &= C\int_{B} |\pbar^2 \kbar|^p d\Vbar <\infty.
\end{aligned}
\end{equation}
This is inequality \eqref{eq:FiniteKbar}, and a similar argument shows that
inequality \eqref{eq:FiniteKbar2} holds as well.
\end{proof}

The threshold $\tau=-2$ in Lemma \ref{lem:compactification}
arises because there is a connection between the rate
of decay of the AE metric and the rate of convergence of the metric at the
point of compactification in a chosen coordinate system: roughly speaking,
decay of order $\rho^{\tau}$
corresponds to convergence at a rate of $r^{-\tau}$.  For a generic smooth metric we can
use normal coordinates to obtain convergence at a rate of $r^2$, but we cannot
expect to do better generally.  Hence the decompactification of a smooth metric will
typically not decay faster than $\rho^{-2}$.  Looking at the
proof of Lemma \ref{lem:compactification}, we note that it can be 
readily extended to $k>2$ to show that
a $W^{k,p}_\tau$ AE metric with $k\geq 2$, $p>n/k$
and $\tau=(n/p)-k$
can be compactified to a $W^{k,p}$ metric. But the decay condition 
$\tau=(n/p)-k$ is quite restrictive for $k>2$:
smooth metrics decompactify generally to metrics with decay $O(\rho^{-2})$,
but compactification of a $W^{k,p}_{-2}$ metric would not be known to be
$C^3$, regardless of how high $k$ and $p$ are.  A more refined analysis
for $k>2$ would need to take into account asymptotics of the Weyl or Cotton-York
tensor, and we point to Herzlich \cite{Herzlich97} for related results
in the $C^k$ setting.

\begin{prop}\label{prop:yAE=yCpct}
Let $(M,g)$ and $(\Mbar,\gbar)$ be a pair of manifolds as in Lemma
\ref{lem:compactification}, related by $g = \phibar^{N-2} \gbar$.
Then $y_g(M)=y_{\gbar}(\Mbar)$.
\end{prop}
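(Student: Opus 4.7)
The plan is to exploit the conformal transformation identity
\begin{equation*}
Q^y_g(u) = Q^y_{\bar g}(\bar\phi u)
\end{equation*}
that underlies the proof of Lemma \ref{ConformalInvariance}. The essential twist is that $\bar\phi$ blows up at $\mathcal P$, so that lemma cannot be applied verbatim; instead one must separately verify, for appropriately chosen test function classes, that multiplication by $\bar\phi$ gives a correspondence between competitors for $y_g(M)$ and competitors for $y_{\bar g}(\bar M)$.

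For the inequality $y_{\bar g}(\bar M) \le y_g(M)$ the situation is easy. Given any $u \in C_c^\infty(M)$, the function $\bar u := \bar\phi u$ is smooth on $M$ and vanishes in a neighborhood of $\mathcal P$, hence extends to a smooth function on $\bar M$. The transformation computation of Lemma \ref{ConformalInvariance} applies without modification on the support of $u$, giving $Q^y_g(u) = Q^y_{\bar g}(\bar u)$. Taking the infimum over $u \in C_c^\infty(M)$ and using density of $C_c^\infty(M)$ in $W^{1,2}_{\delta^*}(M)$ (together with the continuity properties of $Q^y_g$ noted in Corollary \ref{cor:uppersc}) yields the desired inequality.

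For the reverse inequality $y_g(M) \le y_{\bar g}(\bar M)$, given $\bar u \in C^\infty(\bar M)\setminus\{0\}$ I would define $u := \bar u/\bar\phi$ on $M$. From the proof of Lemma \ref{lem:compactification}, $\bar\phi \sim \rho^{n-2}$ near each end of $M$, so $u \sim \bar u(P_i)\rho^{2-n}$ and $|\nabla u| = O(\rho^{1-n})$. Since $2-n<\delta^*$, a direct computation using \eqref{eq:weighted-norm} confirms $u \in W^{1,2}_{\delta^*}(M)\setminus\{0\}$. The transformation identity for this pair requires justification, since $u$ is no longer compactly supported in $M$; I would obtain it by an approximation argument, setting $u_k := \chi_k u$ for a cutoff $\chi_k$ supported in $M$, equal to $1$ outside $B_{2/k}(\mathcal P) \subset \bar M$, and with $|\bar\nabla \chi_k| \le c k$. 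Each $u_k \in C_c^\infty(M)$, so the first step gives $Q^y_g(u_k) = Q^y_{\bar g}(\chi_k \bar u)$. Passing to the limit $k \to \infty$ gives $Q^y_g(u) = Q^y_{\bar g}(\bar u)$, and hence $y_g(M) \le Q^y_{\bar g}(\bar u)$. Taking the infimum over $\bar u$ finishes the proof.

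The main obstacle is the convergence in the approximation argument. One needs $u_k \to u$ in $W^{1,2}_{\delta^*}(M)$ and $\chi_k \bar u \to \bar u$ in $W^{1,2}(\bar M)$ strongly enough that both the numerator $\int a|\nabla \cdot|^2 + R (\cdot)^2$ (continuous by Lemma \ref{lem:RIntegralBound}) and the $L^N$ denominator converge. The delicate term is $\int u^2 |\nabla(1-\chi_k)|^2 \le c k^2 \int_{B_{2/k}(\mathcal P)} u^2\, dV_g$, and in $\bar g$-coordinates near $\mathcal P$ this integral behaves like $\int_{B_{2/k}(\mathcal P)} \bar u^2 |x|^{-2(n-2)} |x|^{2n} d\Vbar$, which vanishes at the right rate thanks to the boundedness of $\bar u$. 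This parallels the gradient-cutoff estimate in the proof of Lemma \ref{InnerApproxNeg}, adapted to handle the compactification geometry rather than shrinking balls.
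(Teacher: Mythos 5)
Your strategy is essentially the paper's. The published proof also reduces everything to the identity $Q^y_g(u)=Q^y_{\gbar}(\phibar u)$ applied to test functions vanishing near the compactification point $P$: it disposes of the approximation issue by observing that such functions are dense in $W^{1,2}(\Mbar)$ when $n>2$ (a point has zero $W^{1,2}$-capacity), together with density of $W^{1,2}_c(M)$ in $W^{1,2}_{\delta^*}(M)$ and upper semicontinuity of $Q^y$. Your cutoff construction is exactly the proof of that density statement, so the two arguments coincide in substance.

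However, your verification of the ``delicate term'' is not correct as written; the conformal bookkeeping needs to be redone. First, the bound $|\nabla\chi_k|\le ck$ holds for the $\gbar$-gradient, not the $g$-gradient: since $g=\phibar^{N-2}\gbar$ one has $|\nabla\chi_k|_g^2=\phibar^{2-N}|\nabla\chi_k|_{\gbar}^2=|x|^4|\nabla\chi_k|_{\gbar}^2$, so your inequality $\int u^2|\nabla(1-\chi_k)|_g^2\,dV_g\le ck^2\int_{B_{2/k}}u^2\,dV_g$ discards a factor of order $k^{-4}$. Second, your conversion of $u^2\,dV_g$ has both exponents with the wrong sign: $u^2=\bar u^2\,\phibar^{-2}=\bar u^2|x|^{2(n-2)}$ and $dV_g=\phibar^N d\Vbar=|x|^{-2n}d\Vbar$, so $u^2\,dV_g=\bar u^2|x|^{-4}d\Vbar$, not $\bar u^2|x|^{+4}d\Vbar$. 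These two errors happen to compensate in your write-up; if one corrects only the second, the chain gives $k^2\int_{B_{2/k}}\bar u^2|x|^{-4}d\Vbar=O(k^{6-n})$, which does not tend to zero for $n\le 6$ (and the integral itself diverges for $n\le 4$), so the argument as literally assembled would fail in the most relevant dimensions. The correct computation is that all conformal factors cancel exactly:
\begin{equation*}
\int_M u^2|\nabla\chi_k|_g^2\,dV_g=\int_{\Mbar}\bar u^2\,|\nabla\chi_k|_{\gbar}^2\,d\Vbar
\le c\,k^2\,\|\bar u\|_\infty^2\,\Vol\bigl(B_{2/k}(\mathcal P)\bigr)=O(k^{2-n})\to 0,
\end{equation*}
which is the standard zero-capacity estimate. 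With that repair (and noting that the $L^N$ denominators converge to the common nonzero value $\|\bar u\|_{\gbar,N}=\|u\|_{g,N}$ by conformal invariance of the $L^N$ norm), your argument is complete and matches the paper's.
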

\begin{proof}
For simplicity we assume that $M$ has one end.  Let $P\in \Mbar$ 
be the singular point of $\phibar$. 
Note that $W^{1,2}_c(M)$ is dense in $W^{1,2}_{\delta^*}(M)$
and that
\begin{equation}
S_P:=W^{1,2}(\Mbar)\cap\{u: u|_{B_r(P)}=0 \textrm{ for some } r>0\}
\end{equation} is dense in
$W^{1,2}(\Mbar)$ since $2<n$. 
From upper semicontinuity of the Yamabe quotient, the Yamabe
invariants of $g$ and $\gbar$ can be computed by minimizing
the Yamabe quotient over $W^{1,2}_c$ and $S_P$ respectively.
Note that $u \mapsto \phibar u$ is a bijection between $W^{1,2}_c(M)$
and $S_p$. The proof of Lemma \ref{ConformalInvariance}
shows that for $u\in W^{1,2}_c$,
\begin{equation}
Q_g^y(u) = Q_{\gbar}^y(\phibar u)
\end{equation} 
and hence $y_g(M) = y_{\gbar}(\Mbar)$.
\end{proof}

Combining Lemma \ref{lem:compactification} and Proposition \ref{prop:yAE=yCpct}
we obtain our second classification.
\begin{prop}\label{prop:classifybycompactify}
Let $(M,g)$ be a $W^{2,p}_{\tau}$ AE manifold with $\tau \le \frac{n}{p}-2$.
Then $(M,g)$ is Yamabe positive/negative/null if and only if
some conformal compactification, as described in Lemma \ref{lem:compactification},
has the same Yamabe type.
\end{prop}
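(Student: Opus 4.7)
The plan is to deduce the proposition as an essentially immediate corollary of Lemma \ref{lem:compactification} and Proposition \ref{prop:yAE=yCpct}, together with the fact that the Yamabe type of a manifold is determined solely by the sign of its Yamabe invariant.

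First, I would reconcile the hypothesis $\tau \le n/p - 2$ with the equality $\tau = n/p - 2$ appearing in Lemma \ref{lem:compactification}. From the definition \eqref{eq:weighted-norm} of the weighted norm and the fact that $\rho \ge 1$, whenever $\tau \le \tau'$ one has the continuous inclusion $W^{2,p}_\tau \hookrightarrow W^{2,p}_{\tau'}$. Consequently, any $W^{2,p}_\tau$ AE metric with $\tau \le n/p - 2$ is in particular a $W^{2,p}_{n/p - 2}$ AE metric. Thus Lemma \ref{lem:compactification} applies and yields at least one pair $(\Mbar, \gbar)$ with $\gbar = \phi^{N-2} g$, $\phi$ decaying like $\rho^{2-n}$ at infinity, and $\gbar \in W^{2,p}(\Mbar)$.

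Second, for the forward implication, suppose $(M,g)$ is Yamabe positive, null, or negative. Take the compactification $(\Mbar, \gbar)$ constructed above and apply Proposition \ref{prop:yAE=yCpct}, which yields the equality
\begin{equation}
y_g(M) = y_{\gbar}(\Mbar).
\end{equation}
In particular the two invariants have the same sign, so $(\Mbar, \gbar)$ has the same Yamabe type as $(M,g)$, furnishing the required ``some'' compactification.

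Third, for the converse, suppose $(\Mbar, \gbar)$ is any conformal compactification of $(M,g)$ as described in Lemma \ref{lem:compactification}, and that it has a given Yamabe type. Then the pair $(M,g)$, $(\Mbar, \gbar)$ satisfies the hypotheses of Proposition \ref{prop:yAE=yCpct}, so again $y_g(M) = y_{\gbar}(\Mbar)$. Hence $(M,g)$ itself has that Yamabe type. This covers the three cases uniformly.

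There is no real obstacle in this argument: Lemma \ref{lem:compactification} and Proposition \ref{prop:yAE=yCpct} do all of the analytic work, and the proposition reduces to reading off signs. The only small item to watch is the weight-embedding reconciliation noted in the first step, which is why the hypothesis can be relaxed to $\tau \le n/p - 2$ rather than requiring equality.
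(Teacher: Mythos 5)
Your proof is correct and is essentially the paper's own argument: the paper proves the proposition simply by combining Lemma \ref{lem:compactification} with Proposition \ref{prop:yAE=yCpct}, exactly as you do. Your extra remark that $W^{2,p}_{\tau}\hookrightarrow W^{2,p}_{n/p-2}$ for $\tau\le n/p-2$ (so that the lemma applies under the relaxed decay hypothesis) is a detail the paper leaves implicit, and it is correctly justified.
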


Consequently, Yamabe classification on AE manifolds has the same topological flavor
as in the compact setting. For instance, since the
torus does not allow a Yamabe positive metric, the decompactified torus,
which is diffeomorphic to $\R^n$ with a handle, does not allow a metric with nonnegative
scalar curvature.

We mention an application of Proposition \ref{prop:classifybycompactify}
to general relativity. In general relativity, spacetimes can be
constructed by specifying initial data in the form of a Riemannian manifold $(M,g)$ and a
symmetric (0,2)-tensor $K$, and then solving a
hyperbolic evolution problem to construct an ambient Lorentzian spacetime
such that $g$ and $K$ are the induced metric and second fundamental form
of the initial hypersurface.
However, the initial data cannot be freely specified; it
must satisfy the Einstein constraint equations,
\begin{equation} \label{eq:Constraints}
\begin{aligned}
R - |K|^2 + \tr K^2 = r,\\
\di K - d \, \tr K = j,
\end{aligned}
\end{equation} where $r$ is the energy density and $j$ is the momentum density
of matter. 
It is natural to suppose that the energy density $r$ is everywhere nonnegative,
which is known as the weak energy condition.
If the initial data is maximal, i.e., if the mean curvature $\tr\, K$ is zero, then
the weak energy condition implies $R\geq 0$. Thus, if 
the compactification of an AE manifold has a topology that does not
admit a Yamabe positive metric, then
the original AE manifold does not allow maximal initial data satisfying
the weak energy condition. 

\section{Acknowledgements}

We would like to thank James Isenberg and Rafe Mazzeo
for useful discussions. This research
was partially supported by the NSF grant DMS-1263431. This material is based upon work
supported by the National Science Foundation under Grant No. 0932078 000 while the authors
were in residence at the Mathematical Sciences Research Institute in Berkeley, California,
during the fall of 2013.

\bibliographystyle{alpha}
\bibliography{KnownResults,AEConformal,MaxwellAdditions}

\begin{thebibliography}{Max05b}

\bibitem[Aub76]{Aubin76}
T.~Aubin.
\newblock {\'E}quations diff{\'e}rentielles non lin{\'e}aires et probl{\'e}me
  de {Y}amabe concernant la courbure scalaire.
\newblock {\em J. Math. Pures Appl.}, 55:269--296, 1976.

\bibitem[Aub98]{Aubin98}
T.~Aubin.
\newblock {\em Some Nonlinear Problems in Riemannian Geometry}.
\newblock Springer, 1998.

\bibitem[Bar86]{Bartnik86}
R.~Bartnik.
\newblock The mass of an asymptotically flat manifold.
\newblock {\em Comm. Pure Appl. Math.}, 39(5):661--693, 1986.

\bibitem[BE87]{Bourguignon:1987ge}
J.~Bourguignon and J.~Ezin.
\newblock {Scalar Curvature Functions in a Conformal Class of Metrics and
  Conformal Transformations}.
\newblock {\em Transactions of the American Mathematical Society},
  301(2):723--736, June 1987.

\bibitem[CB81]{BC81}
M.~Cantor and D.~Brill.
\newblock The {L}aplacian on asymptotically flat manifolds and the
  specification of scalar curvature.
\newblock {\em Compositio Math.}, 43(3):317--330, 1981.

\bibitem[DGI15]{DGI13}
J.~Dilts, R.~Gicquaud, and J.~Isenberg.
\newblock A nonexistence result for the conformal constraint equations on
  asymptotically {E}uclidean manifolds.
\newblock {\em Preprint}, 2015.

\bibitem[ES86]{ES86}
J.~Escobar and R.~Schoen.
\newblock Conformal metrics with prescribed scalar curvature.
\newblock {\em Invent. Math.}, 86(2):243--254, 1986.

\bibitem[FCS80]{FCS80}
D.~Fischer-Colbrie and R.~Schoen.
\newblock The structure of complete stable minimal surfaces in {$3$}-manifolds
  of nonnegative scalar curvature.
\newblock {\em Comm. Pure Appl. Math.}, 33(2):199--211, 1980.

\bibitem[Fri11]{Friedrich11}
H.~Friedrich.
\newblock Yamabe numbers and the {B}rill-{C}antor criterion.
\newblock {\em Ann. Henri Poincar\'e}, 12(5):1019--1025, 2011.

\bibitem[Her97]{Herzlich97}
M.~Herzlich.
\newblock Compactification conforme des vari\'et\'es asymptotiquement plates.
\newblock {\em Bull. Soc. Math. France}, 125(1):55--91, 1997.

\bibitem[LP87]{LP87}
J.~Lee and T.~Parker.
\newblock The {Y}amabe problem.
\newblock {\em Bull. Amer. Math. Soc. (N.S.)}, 17(1):37--91, 1987.

\bibitem[Max05a]{Maxwell05}
D.~Maxwell.
\newblock Rough solutions of the {E}instein constraint equations on compact
  manifolds.
\newblock {\em J. Hyperbolic Differ. Equ.}, 2(2):521--546, 2005.

\bibitem[Max05b]{Maxwell05b}
D.~Maxwell.
\newblock Solutions of the {E}instein constraint equations with apparent
  horizon boundaries.
\newblock {\em Comm. Math. Phys.}, 253(3):561--583, 2005.

\bibitem[Rau95]{Rauzy95}
A.~Rauzy.
\newblock Courbures scalaires des vari\'et\'es d'invariant conforme n\'egatif.
\newblock {\em Trans. Amer. Math. Soc.}, 347(12):4729--4745, 1995.

\bibitem[Sch84]{Schoen84}
R.~Schoen.
\newblock Conformal deformation of a {R}iemannian metric to constant scalar
  curvature.
\newblock {\em J. Diff. Geom.}, 20:479--495, 1984.

\bibitem[Tri76a]{triebel-weighted-one}
H.~Triebel.
\newblock Spaces of {K}udrajavcev type. {I}. {I}nterpolation, embedding, and
  structure.
\newblock {\em J. Math. Anal. Appl.}, 56(2):253--277, 1976.

\bibitem[Tri76b]{triebel-weighted-two}
H.~Triebel.
\newblock Spaces of {K}udrajavcev type. {II}. {S}paces of distributions:
  duality, interpolation.
\newblock {\em J. Math. Anal. Appl.}, 56(2):278--287, 1976.

\bibitem[Tru68]{Trudinger68}
N~. Trudinger.
\newblock Remarks concerning the conformal deformation of {R}iemannian
  structures on compact manifolds.
\newblock {\em Ann. Scuola Norm. Sup. Pisa}, 22:265--274, 1968.

\bibitem[Tru73]{trudinger-measurable}
N.~Trudinger.
\newblock Linear elliptic operators with measurable coefficients.
\newblock {\em Ann. Scuola Norm. Sup. Pisa (3)}, 27:265--308, 1973.

\bibitem[Yam60]{Yamabe60}
H.~Yamabe.
\newblock On a deformation of {R}iemannian structures on compact manifolds.
\newblock {\em Osaka Math. J.}, 12:21--37, 1960.

\end{thebibliography}
\end{document}